\newtheorem{theorem}{Theorem}[section]
\newtheorem{lemma}[theorem]{Lemma}
\newtheorem{proposition}[theorem]{Proposition}
\newtheorem{definition}[theorem]{Definition}
\newtheorem{remark}[theorem]{Remark}
\newtheorem{example}[theorem]{Example}
\newtheorem*{maintheorem}{Theorem \ref{thm:main}}
\renewcommand{\setminus}{\smallsetminus}
\newcommand{\Q}{\mathbb{Q}}
\newcommand{\Z}{\mathbb{Z}}
\newcommand{\N}{\mathbb{N}}
\newcommand{\mf}[1]{\mathfrak{#1}}
\newcommand{\OO}{\mathcal{O}}
\newcommand{\kVG}{k[V]^G}
\newcommand{\CV}{C(P)^{\vee}}
\newcommand{\MM}{\mathcal{M}}
\newcommand{\NN}{\mathcal{N}}
\newcommand{\bbar}[1]{\setbox0=\hbox{$#1$}\dimen0=.2\ht0 \kern\dimen0 \overline{\kern-\dimen0 #1}}
\newcommand{\ra}{\to}
\DeclareMathOperator{\Spec}{Spec}
\DeclareMathOperator{\coker}{coker}
\DeclareMathOperator{\Hom}{Hom}
\DeclareMathOperator{\Ext}{Ext}
\DeclareMathOperator{\rank}{rank}
\DeclareMathOperator{\Cone}{Cone}
\begin{document}

\keywords{log structure, Chevalley-Shephard-Todd, toric stack, stacky fan}
\subjclass[2010]{14D23, 14M25.}

\begin{abstract}
We develop a theory of toric Artin stacks extending the theories of toric Deligne-Mumford stacks developed by Borisov-Chen-Smith, Fantechi-Mann-Nironi, and Iwanari.  We also generalize the Chevalley-Shephard-Todd theorem to the case of diagonalizable group schemes. These are both applications of our main theorem which shows that a toroidal embedding $X$ is canonically the good moduli space (in the sense of Alper) of a smooth log smooth Artin stack whose stacky structure is supported on the singular locus of $X$.
\end{abstract}

\title{Canonical Artin Stacks over Log Smooth Schemes}
\author{Matthew Satriano}
\thanks{Department of Mathematics, University of Michigan, 2074 East Hall, Ann Arbor, MI 48109- 1043, USA.\\
E-mail: \url{satriano@umich.edu}}
\date{}
\maketitle

\vspace{-1cm}



\section{Introduction}
\label{sec:intro}
Resolution of singularities is a fundamental tool in the study of singular schemes, yet resolutions are oftentimes difficult to control.  In certain cases, however, one can introduce a canonical smooth stack which approximates a singular scheme $X$ and serves as a replacement for a resolution of singularities.  The first known instance of this is when $X$ is a scheme over a field $k$ with quotient singularities prime to the characteristic of $k$: for such $X$, there is a smooth Deligne-Mumford stack $\mf{X}$ and a coarse space map $f:\mf{X}\to X$ such that the base change of $f$ to the smooth locus of $X$ is an isomorphism (see for example \cite[2.9]{int}).

When the singularities of $X$ are worse than quotient singularities, it is not possible to find 
a Deligne-Mumford stack $\mf{X}$ as above.  One can still hope, however, to ``resolve the singularities'' of $X$ by a smooth Artin stack.  Typically Artin stacks do not have coarse spaces, and the appropriate notion that replaces coarse space is that of good moduli space (in the sense of \cite{alper}).

Inspired by the work of Iwanari \cite{mfr}, we restrict attention to a class of schemes which carry more structure in hopes of being able to both construct a stacky resolution and say more about it than we could for an arbitrary scheme.  This richer class of schemes we look at is that of fs log smooth log schemes $X$ over $k$, where $\Spec k$ is given the trivial log structure (or equivalently, the class of toroidal embeddings which are not necessarily strict).  Our main result is then:

\begin{maintheorem}
Let $k$ be field and $X$ be an fs log scheme which is log smooth over $S=\Spec k$, where $S$ is given the trivial log structure.  Then there exists a smooth log smooth Artin stack $\mf{X}$ over $S$ and a good moduli space morphism $f:\mf{X}\rightarrow X$ over $S$.  Moreover, the base change of $f$ to the smooth locus of $X$ is an isomorphism.
\end{maintheorem}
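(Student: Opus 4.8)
The plan is to construct $\mf X$ étale-locally on $X$ as a toric quotient stack, verify all the assertions in that local model, and then glue; the guiding principle is that over the trivial log base $S$ log smoothness forces $X$ to look étale-locally like a toric variety $\Spec k[P]$, and such a toric variety should be the good moduli space of a canonical stack built from its Cox-type presentation (which is a genuine Artin stack, not Deligne--Mumford, because $P$ is an arbitrary fs monoid). Concretely: by Kato's structure theorem for log smooth morphisms, étale-locally on $X$ there are a sharp fs monoid $P$, a chart $P\to M_X$, and a \emph{smooth} morphism $g\colon U\to Y:=\Spec k[P]$ which is strict for the toric log structure on $Y$. Granting for each such $Y$ a smooth log smooth Artin stack $\mf Y$ over $S$ with a good moduli space morphism $f\colon\mf Y\to Y$ that is an isomorphism over $Y^{\mathrm{sm}}$, I set $\mf U:=\mf Y\times_Y U$. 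Then $\mf U\to\mf Y$ is a base change of $g$, hence strict and smooth, so $\mf U$ is smooth and log smooth over $S$; $\mf U\to U$ is a good moduli space morphism because good moduli spaces are stable under the flat base change $g$ (Alper); and $\mf U\to U$ is an isomorphism over $g^{-1}(Y^{\mathrm{sm}})=U^{\mathrm{sm}}$. So everything reduces to constructing $\mf Y$ and showing the construction glues.

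\emph{The toric model.} Write $N=P^{\mathrm{gp}}$, $M=\Hom(N,\Z)$, and $C=P\otimes\mathbb R_{\ge0}\subseteq N_{\mathbb R}$, a pointed rational polyhedral cone. Let $v_1,\dots,v_n\in M$ be the primitive generators of the rays of the dual cone $\CV$, and define $\phi\colon\Z^n\to M$ by $e_i\mapsto v_i$. Since $P$ is \emph{saturated}, $C=\{x\in N_{\mathbb R}:\langle v_i,x\rangle\ge0\ \forall i\}$, so the dual map $\psi=\phi^\vee\colon N\to\Z^n$, $x\mapsto(\langle v_i,x\rangle)_i$, is injective and identifies $P$ with $\psi(N)\cap\N^n$. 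Put $L=\coker(\psi\colon N\to\Z^n)$ and $G=\Spec k[L]=\Hom(L,\mathbb G_m)$, a diagonalizable group scheme acting on $\mathbb A^n=\Spec k[\N^n]$ through the $L$-grading of $k[\N^n]$. Define $\mf Y:=[\mathbb A^n/G]$, with log structure pulled back along $\mathbb A^n\to\mf Y$ from the standard toric log structure (chart $\N^n$) on $\mathbb A^n$, and let $f\colon\mf Y\to Y$ be induced by the inclusion $k[P]\hookrightarrow k[\N^n]$.

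\emph{Verifying the properties of $\mf Y$.} As a quotient of a finite-type scheme by an affine group scheme, $\mf Y$ is an Artin stack of finite type over $k$; it is smooth and log smooth over $S$ because $\mathbb A^n$ with its toric log structure is, $G$ is diagonalizable, and both properties descend along the $G$-torsor $\mathbb A^n\to\mf Y$. For the good moduli space statement, $G$ is diagonalizable, hence linearly reductive in \emph{every} characteristic (this is the key point making the argument work in bad characteristic), so $f$ is cohomologically affine; and, grading $k[\N^n]$ by $L$,
\[
f_*\OO_{\mf Y}=k[\N^n]^G=\bigoplus_{\substack{m\in\N^n\\ \bar m=0\text{ in }L}}k\cdot x^m=k[\psi(N)\cap\N^n]=k[P]=\OO_Y ,
\]
so $f$ is a good moduli space morphism. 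Finally, $Y^{\mathrm{sm}}$ is the union of the torus orbits whose defining faces of $C$ are spanned by part of a $\Z$-basis of $N$; over the preimage of $Y^{\mathrm{sm}}$ in $\mathbb A^n$ the action of $G$ is free (no stacky structure survives at a smooth face), so $f$ restricts to an isomorphism there.

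\emph{Gluing, and the main obstacle.} The remaining point — and, I expect, the technical heart — is that $\mf Y$, hence $\mf U$, must depend functorially only on $(U,M_U)$, so that the local stacks glue to a global $\mf X$ over $X$; once this is known, $f$ glues because being a good moduli space morphism is étale-local on the target, $\mf X$ is smooth and log smooth because these are étale-local, and the isomorphism over $X^{\mathrm{sm}}$ is local. The difficulty is that distinct charts $P\to M_U$ produce a priori distinct $\N^n$ and $G$, so one must exhibit canonical $2$-isomorphisms among the resulting quotient stacks, compatible on triple overlaps — which amounts to controlling how the ray/facet data of $C(P)$ transforms under morphisms of fs monoids, together with the ensuing $2$-categorical descent. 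A natural way to make this precise is to give $\mf X$ a chart-free moduli description, e.g.\ working over Olsson's classifying stack of log structures and the toric stacks $\mathcal A_P=[\Spec k[P]/\Spec k[N]]$ that classify log structures with chart $P$: the map $\psi$ induces a canonical morphism $\mathcal A_{\N^n}\to\mathcal A_P$, these are compatible under change of chart, one identifies $Y\times_{\mathcal A_P}\mathcal A_{\N^n}$ with $[\mathbb A^n/G]$, and $\mf X:=X\times_{\mathcal A_P}\mathcal A_{\N^n}$ is then manifestly independent of $P$ — with log smoothness of $X$ over $S$ guaranteeing that $X\to\mathcal A_P$, hence $\mf X\to X$, has the local structure analyzed above. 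Establishing this independence rigorously is the crux; everything else reduces to the toric computations just sketched.
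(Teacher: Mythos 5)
Your local model is exactly the one the paper uses: for $Y=\Spec k[P]$ you take the map $P\to\N^n$ dual to the primitive generators of the rays of $C(P)^{\vee}$ (the paper's \emph{minimal free resolution}), form $G=D(\N^n{}^{gp}/P^{gp})$, and the invariants computation $k[\N^n]^G=k[P]$ is correct because that map is exact. But the step you yourself flag as ``the crux'' --- showing the local quotient stacks depend only on $(U,\MM_U)$ and glue --- is genuinely unresolved in your write-up, and it is not a routine descent argument. The paper avoids the gluing problem entirely by running the logic in the opposite order: it first defines $\mf{X}$ \emph{globally} as the moduli stack of ``sliced resolutions'' of $(X,\MM_X)$ (a stack on the fppf site by Olsson's theorem), and only then identifies it \'etale-locally with $[\Spec k[F]/G]$ (Proposition \ref{prop:comp}). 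Your proposed chart-free description $X\times_{\mathcal A_P}\mathcal A_{\N^n}$ is morally the same object, but proving it is independent of the chart amounts to exactly the content of Proposition \ref{prop:comp}, whose full-faithfulness part (the chart $\gamma:F\to\bbar{\NN}$ is redundant data) is a nontrivial combinatorial argument about extremal rays.

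There is also a substantive reason the naive moduli description fails and a new notion is needed, which your sketch does not anticipate: in the non-simplicial case the stalkwise maps $\bbar{\MM}_{P,f(\bar t)}\to\bbar{\MM}_{F,\bar t}$ induced by $\Spec k[F]\to\Spec k[P]$ are \emph{not} minimal free resolutions --- they need not even be injective (Example \ref{ex:sliced}). The correct moduli problem parametrizes \emph{sliced} resolutions (quotients of a minimal free resolution by a face meeting $i(P)$ trivially), and verifying that these behave well under passing to faces is the content of Propositions \ref{prop:face} and \ref{prop:2.12}. Without this notion your fiber product has no intrinsic characterization, so the compatibility of the $2$-isomorphisms on overlaps cannot be checked. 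Separately, your assertion that $G$ acts freely over the preimage of $Y^{\mathrm{sm}}$ is stated without proof; the paper instead deduces triviality of the stack over $X^{sm}$ from the fact that $\bbar{\MM}_{X,\bar x}$ is free there, so that sliced resolutions over $X^{sm}$ are strict and automorphism-free. In short: the toric computations are right, but the construction of the global object --- which is the actual theorem --- is missing.
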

This is a generalization of \cite[Thm 3.3]{mfr} where the result is proved for $X$ all of whose charts are given by simplicial toric varieties.  The stack $\mf{X}$ has a moduli interpretation in terms of log geometry and agrees with the stack Iwanari constructs when $X$ is as in \cite[Thm 3.3]{mfr}.  
In addition to having a moduli interpretation, the stacky resolution $f$ has the property that it is an isomorphism away from codimension 2 on the \emph{source}. This property was particularly useful, for example, in \cite{dR}: in \cite[Thm 1.10]{cst}, we constructed analogous stacky resolutions for schemes with linearly reductive singularities (\cite[Def 5.1]{cst}), and then used such resolutions in \cite[Thm 4.8]{dR} to show the degeneracy of a variant of the Hodge-de Rham spectral sequence in characteristic $p$.

We give two applications of Theorem \ref{thm:main}: a generalization of the Chevalley-Shephard-Todd theorem (\cite[\S 5 Thm 4]{bourbaki}) to the case of diagonalizable group schemes and a generalization of the work of \cite{bcs,fant,iwtoric} on toric Deligne-Mumford stacks to the case of toric Artin stacks.

\subsection*{Chevalley-Shephard-Todd Theorem for diagonalizable group schemes}
We recall that if $k$ is a field and 
$G$ is a finite (abstract) group which acts faithfully on a $k$-vector space $V$, then $g\in G$ is called a \emph{pseudo-reflection} if $V^g$ is a hyperplane.  The Chevalley-Shephard-Todd Theorem states that if the order of $G$ is prime to the characteristic of $k$, then the invariants $k[V]^G$ is a polynomial ring if and only if $G$ is generated by pseudo-reflections. 

There is a strong connection between the Chevalley-Shephard-Todd theorem and the 
the existence of stacky resolutions for schemes with quotient singularities prime to the characteristic. 
In fact, we hope to convince the reader that (at least in the setting of linearly reductive group schemes) proving stacky resolution theorems as above is roughly \emph{equivalent} to proving Chevalley-Shephard-Todd type theorems.  This philosophy is demonstrated, for example, in \cite{cst} where we generalized the Chevalley-Shephard-Todd theorem to the case of finite linearly reductive group schemes and from this 
we deduced a stacky resolution theorem \cite[Thm 1.10]{cst} for schemes with linearly reductive singularities. Conversely, in this paper, we \emph{first} prove a stacky resolution theorem for toroidal embeddings (Theorem \ref{thm:main}) and then by reinterpreting its proof, arrive at a Chevalley-Shephard-Todd theorem for diagonalizable group schemes $G$ (see Theorem \ref{thm:cst}, which is a bit too technical to state in the introduction).

We remark that in the process of proving 
Theorem \ref{thm:cst}, we give necessary and sufficient conditions for when 
$\Spec(\kVG)$ is a simplicial toric variety (see Theorem \ref{thm:msopsimp}, also obtained by Wehalu in \cite[Thm 4.1]{wehlau}). We note that Theorem \ref{thm:cst} recovers 
\cite[Thm 5.6]{wehlau}, which gives the result when $G$ is a torus.

\subsection*{Toric Artin stacks}
If we require that $X$ is a toric variety rather than just a toroidal embedding, then by considering variants of the moduli problem in Theorem \ref{thm:main}, we produce (Theorem \ref{thm:toricmain}) many smooth log smooth Artin stacks, all of which have $X$ as a good moduli space. Each of these Artin stacks has a dense open torus whose action on itself extends to an action on the stack.  Thus, these stacks should all be thought of as toric Artin stacks.  

In \cite{bcs}, Borisov, Chen, and Smith introduce the notion of a stacky fan $\mathbf{\Sigma}$
and associate to it a smooth Deligne-Mumford stack $\mf{X}(\mathbf{\Sigma})$.  They refer to stacks obtained in this way as toric Deligne-Mumford stacks. Fantechi, Mann, and Nironi \cite{fant} (and independently Iwanari \cite{iwtoric}) took a different approach to toric stacks.  They define a toric Deligne-Mumford stack to be a smooth Deligne-Mumford stack $\mf{X}$ with a dense torus $T$ whose action on itself extends to an action on $\mf{X}$.  They then show that this definition is equivalent to the one introduced in \cite{bcs}; that is, every such stack is of the form $\mf{X}(\mathbf{\Sigma})$.

Analogously, one might hope that the moduli-theoretic Artin stacks we produce in Theorem \ref{thm:toricmain} also come from some sort of stacky fan.  This is indeed the case.  In Section \ref{subsec:stackyfan}, we introduce a notion of stacky fan which generalizes that of \cite{bcs}, and associate to each generalized stacky fan a smooth log smooth Artin stack.  We then show in Theorem \ref{thm:twostacks} that our moduli-theoretic stacks of Theorem \ref{thm:toricmain} all come from generalized stacky fans.  We hope that Section \ref{sec:toric} will form the basis for a theory of toric Artin stacks, extending the theories of toric Deligne-Mumford stacks introduced in \cite{bcs}, \cite{fant}, and \cite{iwtoric}.

\subsection*{Notation and prerequisites.}
We assume the reader is familiar with log geometry as in \cite{kkato}.  For basic properties concerning monoids, we refer the reader to \cite[Chpt I]{ogus}. Given a monoid $P$ and a ring $R$, we denote the standard log structure on $\Spec R[P]$ by $\MM_P$. If $X$ is a log scheme, we denote by $X^{triv}$ the locus where the log structure is trivial.


Given a geometric point $\bar{x}$ of a scheme $X$, we let $\OO_{X,\bar{x}}$ denote the strict henselization of $\OO_{X,x}$, where $x$ is the image of $\bar{x}$ in $X$.

Given an abelian group $A$ and a scheme $X$, we denote by $D_X(A):=\Spec_X \OO_X[A]$ the diagonalizable group scheme over $X$ associated to $A$.  We often drop the subscript $X$ when it is understood from context.

\section{Minimal Free Resolutions}
\label{sec:mfr}
In this section, we define the objects parameterized by the stack $\mf{X}$ of Theorem \ref{thm:main}.  We begin by recalling some definitions. A monoid $P$ is called \emph{toric} if it is fine, saturated, and $P^{gp}$ is torsion free.  We denote by $C(P)$ the rational cone generated by $P$ in $P^{gp}\otimes\Q$.  A toric monoid $P$ is called \emph{simplicially toric} if the number of extremal rays of the dual cone $\CV$ is equal to the rank of $P^{gp}$.

A morphism $f:P\ra Q$ of integral monoids is called \emph{close} if for all $q\in Q$, there exists a positive integer $n$ such that $nq$ is in the image of $f$. 
We recall (\cite[Def 2.6]{mfr}) that an injective morphism $i:P\to F$ from a simplicially toric sharp 
monoid to a free monoid is called a \emph{minimal free resolution} if $i$ is close and if for all injective close 
morphisms $i':P\to F'$ to a free monoid $F'$ of the same rank as $F$, there 
is a unique morphism $j:F\to F'$ such that $i'=ji$.  The stack Iwanari constructs in \cite[Thm 3.3]{mfr} 
has a moduli interpretation in terms of minimal free resolutions, so our first step is to generalize this notion.  
The key is to replace his use of closeness in the above definition with that of exactness.
\begin{definition}[{\cite[Def I.2.1.8]{ogus}}]
\emph{A morphism $f:P\to Q$ of integral monoids is \emph{exact} if the diagram}
\[
\xymatrix{
P\ar[r]^{f}\ar[d] & Q\ar[d]\\
P^{gp}\ar[r]^{f^{gp}} & Q^{gp}
}
\]
\emph{is set-theoretically cartesian.}
\end{definition}
Note that if $f:P\to Q$ is sharp and exact, then it is automatically injective.\\
\\
Let $P$ be a  toric sharp monoid.  Let $\rho_1,\dots,\rho_d$ be the extremal rays of $C(P)^{\vee}$ and denote by $v_i$ the first lattice point on $\rho_i$. Let $F(P)$ be the free monoid on the $v_i$.  We obtain a morphism $i:P\to F(P)$ defined by 
$p\mapsto (v_i(p))$.
\begin{proposition}
\label{prop:can}
The morphism $i:P\to F(P)$ is exact.  Moreover, for any exact morphism $i':P\to F$ to a free monoid $F$ of rank $d$, there is a unique morphism $j:F(P)\to F$ such that $i'=ji$.
\end{proposition}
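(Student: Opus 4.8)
The plan is to recast both statements in terms of rational polyhedral cones. Write $V=P^{gp}\otimes\Q$, so that $C(P)\subseteq V$ is a rational polyhedral cone; since $P$ is toric it is full-dimensional (it contains $P$, which spans $P^{gp}$), and since $P$ is sharp it is strongly convex. Hence its dual $C(P)^\vee\subseteq V^\vee$ is again full-dimensional and strongly convex, so it is the cone generated by the first lattice points $v_1,\dots,v_d$ on its extremal rays, and biduality for closed convex cones gives $C(P)=(C(P)^\vee)^\vee=\{x\in V:\ v_i(x)\ge 0\text{ for all }i\}$. I would use repeatedly the standard facts that a rational polyhedral cone is the cone generated by its lattice points (Gordan) and that in a strongly convex cone a nonnegative combination of ray generators vanishes only trivially.

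Exactness of $i$ should then be immediate: since $i$ is integral, the defining square is set-theoretically cartesian exactly when $\{x\in P^{gp}:\ i^{gp}(x)\in F(P)\}=P$ inside $P^{gp}$; but $i^{gp}(x)\in F(P)=\N^d$ means precisely $v_i(x)\ge 0$ for all $i$, i.e. $x\in C(P)\cap P^{gp}$ by the above, and $C(P)\cap P^{gp}=P$ because $P$ is saturated. (In particular this reproves that $i$ is injective, in accordance with the remark following the definition of exactness.)

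For the universal property I would fix an identification $F\cong\N^d$; its standard basis $e_1,\dots,e_d$, and hence the coordinate functionals $e_j^\vee\colon F^{gp}\to\Z$, are canonical up to reordering, so the elements $w_j:=e_j^\vee\circ (i')^{gp}\in (P^{gp})^\vee$ are well defined up to order. Exactly as for $i$, exactness of $i'$ says $P=\{x\in P^{gp}:\ w_j(x)\ge 0\text{ for all }j\}$. Each $w_j$ is nonnegative on $P$, hence lies in $C(P)^\vee$, so the (rational) cone $\tau$ generated by $w_1,\dots,w_d$ is contained in $C(P)^\vee$; on the other hand $\tau^\vee=\{x:\ w_j(x)\ge0\ \forall j\}$ satisfies $\tau^\vee\cap P^{gp}=P=C(P)\cap P^{gp}$, and taking the cones generated by these equal sets of lattice points forces $\tau^\vee=C(P)$, hence $\tau=C(P)^\vee$. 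Thus $w_1,\dots,w_d$ generate the strongly convex cone $C(P)^\vee$, which has exactly $d$ extremal rays, and a $d$-element generating set of such a cone must meet each extremal ray in exactly one of its members; so after reindexing $w_j=c_jv_j$ with $c_j\in\Q_{>0}$, and in fact $c_j\in\Z_{>0}$ since $w_j$ is integral and $v_j$ is the first lattice point on $\rho_j$. Defining $j\colon F(P)\to F$ by $v_i\mapsto c_ie_i$, we get $e_j^\vee(j(i(p)))=c_jv_j(p)=w_j(p)=e_j^\vee(i'(p))$ for all $p\in P$, and since $P$ generates $P^{gp}$ this yields $ji=i'$. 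For uniqueness, if $j'i=i'$ as well, write $j'(v_k)=\sum_i a_{ki}e_i$ with $a_{ki}\in\N$; comparing $k$-th coordinates of $j'(i(p))$ and $i'(p)$ and using that $P$ generates $P^{gp}$ gives $\sum_i a_{ki}v_i=c_kv_k$ in $(P^{gp})^\vee$, and since $v_k$ spans an extremal ray of the strongly convex cone $C(P)^\vee$ while all $v_i$ lie in that cone, such a relation forces $a_{ki}=0$ for $i\neq k$ and $a_{kk}=c_k$; hence $j'=j$.

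The one genuinely delicate point should be the existence half of the universal property: extracting from exactness of $i'$ the fact that the dual elements $w_j$ generate \emph{all} of $C(P)^\vee$, and then using the hypothesis that $F$ has rank exactly $d$ to conclude that these $d$ generators are positive integer multiples of the primitive ray generators $v_j$, matched up bijectively. Everything else — exactness of $i$, and uniqueness of $j$ — is a routine translation between fs monoids and rational cones.
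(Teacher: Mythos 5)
Your proposal is correct and follows essentially the same route as the paper: exactness of $i$ reduces to $P=(C(P)^\vee)^\vee\cap P^{gp}$, and for the universal property, exactness of $i'$ forces the dual functionals to generate $C(P)^\vee$, whence the count of extremal rays matches them bijectively (up to positive integer multiples) with the primitive generators $v_i$. You merely supply a bit more detail than the paper on the cone equality and on uniqueness of $j$.
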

\begin{proof}
We begin by showing that $i$ is exact.  This is equivalent to showing that if $p\in P^{gp}$ and $v_i(p)\geq0$ for all $i$, then $p\in P$.  This, in turn, is equivalent to the statement that $P=(C(P)^{\vee})^{\vee}\cap P^{gp}$.  This latter statement follows, for example, from \cite[p.9 (1)]{fulton}.

We now show that $i$ is universal.  Fix an isomorphism $F\cong\N^d$. Then $i'=(\varphi_i)$. Exactness of $i'$ shows that an element $p$ of $P^{gp}$ is in $P$ if and only if $\varphi_i^{gp}(p)\geq 0$ for all $i$.  Since $p$ is in $P$ if and only if $f(p)\geq0$ for all $f\in C(P)^{\vee}$, we see 
\[
\Cone(\varphi_1,\dots,\varphi_d)=\CV=\Cone(v_1,\dots,v_d),
\]
where $\Cone(w_1,\dots,w_d)$ denotes the cone generated by the lattice points $w_i\in\Hom(P^{gp},\Z)$.  Since $\CV$ has exactly $d$ extremal rays, it follows that each $\varphi_i$ lies on a distinct extremal ray.  Composing $i'$ by a uniquely determined permutation of coordinates, we can assume that $\varphi_i$ lies on the ray $\rho_i$.  Since $\varphi_i$ is a lattice point of $\Hom(P^{gp},\Q)$ (as $\varphi_i$ takes integer values on $P$), and since $v_i$ is defined to be the first lattice point on the ray $\rho_i$, we have $\varphi_i=n_iv_i$ for uniquely determined $n_i\in\N$.  Hence, $\sum a_iv_i\mapsto (n_1a_1,\dots,n_da_d)$ is our desired $j$.
\end{proof}
In light of this proposition, we make the following definition.
\begin{definition}
\label{def:mfr}
\emph{Let $P$ be a  toric sharp monoid and let $d$ be the number of extremal rays of $\CV$.  A morphism $i:P\to F$ to a free monoid of rank $d$ is a 
\emph{minimal\ free\ resolution} if it is exact and if for every exact morphism $i':P\to F'$ to a free 
monoid $F'$ of rank $d$, there is a unique morphism $j:F\to F'$ such that $i'=ji$.}
\end{definition}
If all of the charts of $X$ are given by simplicial toric varieties (as in the case Iwanari considers), then one need only consider minimal free resolutions when constructing $\mf{X}$. In the non-simplicial case, however, $\mf{X}$ parametrizes morphisms obtained by ``slicing'' a minimal free resolution by a suitable face:
\begin{definition}
\emph{Let $P$ be a toric sharp monoid. A morphism of monoids $i':P\to F'$ is a \emph{sliced resolution} if there is a minimal free resolution $i:P\to F$, a face $H$ of $F$ such that $i(P)\cap H=0$, and an isomorphism of monoids $F'\to F/H$ over $P$.}
\end{definition}
If $P$ is a  simplicially toric sharp monoid, then the number of extremal rays of $\CV$ is equal to the rank of $P^{gp}$.  
If $i:P\to F$ is a morphism to a free monoid of rank equal to that of $P^{gp}$, then $i$ is exact if and only if 
it is injective and close.  Using this observation, it is an easy exercise to show that our definitions agree with those of Iwanari in the simplicial case:
\begin{lemma}
\label{l:all-definitions-agree}
Let $P$ be a simplicially toric sharp monoid and let $i:P\to F$ be a morphism to a free monoid of rank equal that of $P^{gp}$. Then the following are equivalent:
\begin{enumerate}
\item $i$ is a minimal free resolution in the sense of \cite[Def 2.6]{mfr}.
\item $i$ is a minimal free resolution in the sense of Definition \ref{def:mfr}.
\item $i$ is a sliced resolution.
\end{enumerate}
\end{lemma}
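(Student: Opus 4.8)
The plan is to reduce everything to the observation recorded just before the statement: in the simplicially toric case the integer $d = \rank P^{gp}$ equals the number of extremal rays of $\CV$, and for a morphism $i:P\to F$ whose target is a free monoid of rank $d$, exactness is equivalent to the conjunction of injectivity and closeness. Granting this, the equivalence $(1)\Leftrightarrow(2)$ should be essentially formal: both the definition of \cite[Def 2.6]{mfr} and Definition \ref{def:mfr} ask that $i$ itself be exact (equivalently, injective and close), and both test a lifting property against morphisms $i':P\to F'$ with $F'$ free of rank $d$ satisfying precisely this same property — so once the observation is invoked, the two definitions quantify over the same class of test objects and demand the same unique factorization. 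The only thing to check is that the rank bookkeeping matches, i.e. that ``$F'$ of the same rank as $F$'' in Iwanari's phrasing coincides with ``$F'$ of rank $d$'' in Definition \ref{def:mfr}; this is automatic since we have fixed $\rank F = \rank P^{gp} = d$ from the outset.

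For $(2)\Leftrightarrow(3)$ I would argue as follows. The implication $(2)\Rightarrow(3)$ is immediate: if $i$ is a minimal free resolution in the sense of Definition \ref{def:mfr}, then taking the minimal free resolution to be $i$ itself and the face to be $H=0$ exhibits $i$ as a sliced resolution. Conversely, suppose $i:P\to F$ is a sliced resolution, witnessed by a minimal free resolution $i_0:P\to F_0$, a face $H$ of $F_0$ with $i_0(P)\cap H=0$, and an isomorphism $F\to F_0/H$ over $P$. Since $P$ is simplicially toric, $F_0$ is free of rank $d=\rank P^{gp}$; a face $H$ of $F_0\cong\N^d$ has the form $\N^S$ for some $S\subseteq\{1,\dots,d\}$, so $F_0/H\cong\N^{d-|S|}$. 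As $F$ is assumed free of rank $d$, we get $|S|=0$, hence $H=0$ and $F\cong F_0$ over $P$; transporting the lifting property of Definition \ref{def:mfr} across this isomorphism then shows $i$ is a minimal free resolution in that sense. (Alternatively, one can avoid the rank count and use closeness of $i_0$: every free generator of $F_0$ has a positive multiple in $i_0(P)$, which would lie in $i_0(P)\cap H=0$ were that generator in $H$, again forcing $H=0$.) Here I would take a moment to note explicitly that being a minimal free resolution in the sense of Definition \ref{def:mfr} is invariant under post-composition with an isomorphism of monoids over $P$, since both exactness and the unique lifting property transport along such an isomorphism — this is what makes the last step go through.

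The main obstacle, such as it is, is really just the observation itself — that for a monoid morphism into a free monoid of rank $\rank P^{gp}$, exactness is equivalent to injectivity together with closeness — since the entire reduction rests on it; but as the excerpt already records this fact, what remains is the bookkeeping above together with the (routine) classification of faces of a free monoid. I do not anticipate any genuine difficulty beyond being careful that all the rank hypotheses ($\rank F$, $\rank F_0$, number of extremal rays of $\CV$, and $\rank P^{gp}$) are identified correctly in the simplicially toric setting.
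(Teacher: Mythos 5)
Your proof is correct and follows exactly the route the paper intends: the lemma is left as an ``easy exercise'' resting on the observation, recorded just before the statement, that for targets free of rank $\rank P^{gp}$ exactness coincides with injectivity plus closeness, and your argument (including the face-counting or closeness argument forcing $H=0$ in $(3)\Rightarrow(2)$) is a correct filling-in of that exercise.
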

We now globalize the above definitions to morphisms of log schemes.
\begin{definition}
\emph{A morphism $f:(Y,\MM_Y)\to (X,\MM_X)$ of fs log schemes is a \emph{minimal\ free\ resolution}, resp.~a \emph{sliced resolution} if for all geometric points $\bar{y}$ of $Y$, the induced morphism}
\[
\bbar{\MM}_{X,f(\bar{y})}\longrightarrow \bbar{\MM}_{Y,\bar{y}}
\]
\emph{is a minimal free resolution, resp.~a sliced resolution.}
\end{definition}
\begin{proposition}
\label{prop:face}
Let $P$ be a toric sharp monoid with minimal free resolution $i:P\to F$.  If $P_0$ is a face of 
$P$ and $F_0$ is the face of $F$ generated by $P_0$, then the induced morphism $P/P_0\to F/F_0$ is a minimal 
free resolution.
\end{proposition}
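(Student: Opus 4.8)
The plan is to reduce to the canonical minimal free resolution of Proposition~\ref{prop:can} and then to check that the construction $P\mapsto F(P)$ commutes with passing to a face quotient. By the universal property in Definition~\ref{def:mfr} there is a unique isomorphism $F(P)\xrightarrow{\ \sim\ }F$ over $P$ carrying the canonical morphism of Proposition~\ref{prop:can} to $i$; it carries the face of $F(P)$ generated by the image of $P_0$ to $F_0$, and hence $F(P)$ modulo that face to $F/F_0$. So we may assume $F=F(P)=\N^{d}$ with $i(p)=(v_1(p),\dots,v_d(p))$, the $v_j$ being the first lattice points on the extremal rays $\rho_1,\dots,\rho_d$ of $C(P)^\vee$, and $F_0$ the smallest face of $\N^{d}$ containing $i(P_0)$.

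Next I would identify $F_0$. The face $P_0\subseteq P$ corresponds to a face $C(P_0)$ of the cone $C(P)$, whose dual face is $\sigma_0:=\{f\in C(P)^\vee : f|_{C(P_0)}=0\}$; being a face of the pointed cone $C(P)^\vee$, $\sigma_0$ is spanned by the $\rho_j$ it contains. Put $S:=\{j : \rho_j\subseteq\sigma_0\}=\{j : v_j|_{P_0}\equiv 0\}$. For $p\in P_0$ we have $v_j(p)=0$ whenever $j\in S$, so $i(P_0)\subseteq\N^{S^c}$; and choosing $p$ in the relative interior of $C(P_0)$---for instance a sum of monoid generators of $P_0$---gives $\operatorname{supp}(i(p))=S^c$. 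Hence $F_0=\N^{S^c}$ and $F/F_0=\N^{S}$.

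I would then compute the dual cone of $P/P_0$. Recall that $P/P_0$ is again a toric sharp monoid with $(P/P_0)^{gp}=P^{gp}/P_0^{gp}$, so $\Hom((P/P_0)^{gp},\Q)$ is the subspace of $\Hom(P^{gp},\Q)$ of functionals vanishing on $C(P_0)$, and $\Hom((P/P_0)^{gp},\Z)$ is the sublattice $L'$ of such integral functionals. Since $C(P/P_0)$ is the image of $C(P)$ under $P^{gp}\otimes\Q\to(P^{gp}/P_0^{gp})\otimes\Q$, a direct computation gives $C(P/P_0)^\vee=\sigma_0$; in particular its extremal rays are the $\rho_j$ for $j\in S$, so it has exactly $|S|=\rank(F/F_0)$ of them. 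Moreover, for $j\in S$ the point $v_j$ lies in $\rho_j\subseteq\sigma_0$ and in $\Hom(P^{gp},\Z)\supseteq L'$, hence $v_j\in L'$, and it is still the first lattice point of $\rho_j$ for the lattice $L'$ (a sublattice contains no point of $\rho_j$ strictly between the origin and $v_j$). Thus $F(P/P_0)$ is the free monoid on $\{v_j : j\in S\}=\N^{S}$, and its canonical morphism sends the class of $p$ to $(v_j(p))_{j\in S}$ --- which is exactly the induced morphism $P/P_0\to F/F_0=\N^{S}$. By Proposition~\ref{prop:can} this morphism is exact and has the required universal property, i.e.\ it is a minimal free resolution in the sense of Definition~\ref{def:mfr}.

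The step I expect to be most delicate is the bookkeeping in the last two paragraphs --- matching $F_0$ with $\N^{S^c}$ and, above all, verifying that $C(P/P_0)^\vee$ equals the dual face $\sigma_0$ of $C(P)^\vee$. Both are standard facts about toric monoids and rational polyhedral cones (cf.\ \cite[Chpt I]{ogus} and \cite{fulton}); granting them, the identification $F(P)/F_0\cong F(P/P_0)$ compatible with the canonical morphisms is formal, and Proposition~\ref{prop:can} concludes the proof.
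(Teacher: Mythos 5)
Your proof is correct and follows essentially the same route as the paper's: reduce to the canonical resolution $F(P)$ of Proposition \ref{prop:can}, identify $F_0$ as the coordinate face spanned by the $v_j$ not vanishing on $P_0$, identify $C(P/P_0)^{\vee}$ as the cone on the $v_j$ that do vanish, and conclude by the universal property. The only differences are cosmetic: you invoke the standard face--duality for pointed rational cones where the paper verifies extremality and generation of $C(P/P_0)^{\vee}$ by hand, and you add the (welcome) explicit check that each $v_j$ remains the \emph{first} lattice point on its ray for the sublattice $\Hom((P/P_0)^{gp},\Z)$, a point the paper leaves implicit.
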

\begin{proof}
Since any two minimal free resolutions of $P$ are uniquely isomorphic over $P$, we can assume $F=F(P)$ and $i(p)=(v_i(p))$, where $v_1,\dots,v_d$ are the first lattice points on the extremal rays of $\CV$. Let $\pi:P\to P/P_0$ be the natural projection.

One easily checks that $P/P_0$ is a toric sharp monoid, and so it makes sense to speak of its minimal free resolution.  If $v_i(P_0)=0$, then we obtain a well-defined morphism $w_i:P/P_0\to\N$ given by $w_i(\bar{p})=v_i(p)$.  Note that $w_i$ is an extremal ray of $C(P/P_0)^{\vee}$.  Indeed, if $\psi,\psi'\in C(P/P_0)^{\vee}$ such that $\psi+\psi'=w_i$, then $\psi\pi+\psi'\pi=v_i$.  Since $v_i$ is an extremal ray of $\CV$, it follows that $\psi\pi=av_i$ for some $a\in\Q_{\geq0}$, and so $\psi=aw_i$.

We claim that $C(P/P_0)^{\vee}$ is generated as a rational cone by the $w_i$ such that $v_i(P_0)=0$.  Let $\psi\in C(P/P_0)^{\vee}$.  Then $\psi\pi=\sum_j a_jv_j$ for $a_j\in \Q_{\geq0}$.  To prove our claim, we need only show $a_\ell=0$ if $v_\ell(P_0)\neq0$.  If $v_\ell(p_0)\neq0$ for some $p_0\in P_0$, then since
\[
0=\psi\pi(p_0)=\sum_j a_jv_j(p_0),
\]
and all $a_j\geq0$, we see $a_\ell v_\ell(p_0)=0$ so $a_\ell=0$, and our claim follows.

We next show that $F_0$ is the face generated by the $v_i$ with $v_i(P_0)\neq0$.  If $v_i(p_0)\neq0$ for some $p_0\in P_0$, then $v_i(p_0)>0$.  As a result,
\[
v_i+((v_i(p_0)-1)v_i+\sum_{j\neq i}v_j(p_0)v_j)=i(p_0)
\]
is in the image of $P_0$, so $v_i$ is in the face generated by $P_0$; that is, $v_i\in F_0$.  Conversely, if $v_i\in F_0$, then by the definition of $F_0$, there exists some $p_0\in P_0$ and $b_j\in \N$ such that 
\[
v_i+\sum_{j}b_jv_j=i(p_0)=\sum_{j}v_j(p_0)v_j.
\]
Hence, $v_i(p_0)\neq0$.

To complete the proof of the proposition, note that the induced map $P/P_0\ra F/F_0$ is given by
\[
\bar{p}\longmapsto\sum_{v_i(P_0)=0}v_i(p)\bar{v}_i=\sum_{v_i(P_0)=0} w_i(\bar{p})\bar{v}_i.
\]
Since the $w_i$ such that $v_i(P_0)=0$ are the distinct extremal rays of the rational cone $C(P/P_0)^{\vee}$, we see that $P/P_0\ra F/F_0$ is precisely the minimal free resolution $P/P_0\ra F(P/P_0)$.
\end{proof}
As the following example shows, if $H$ is a face of $F$ and $P_0=H\cap P$, then $P/P_0\to F/H$ is not in general a minimal free resolution.
\begin{example}
\label{ex:sliced}
\emph{
Let $P$ be the (non-simplicial) toric sharp monoid which is free on $x$, $y$, $z$, and $w$ subject to the one relation $x+y=z+w$.  Then the map $i:P\ra \N^4$ defined by 
\[
i(x)=(1,0,0,1),\quad i(y) = (0,1,1,0),\quad i(z) = (1,0,1,0),\quad i(w) = (0,1,0,1)
\]
is a minimal free resolution. Let $k$ be a field and let $f:\mathbb{A}^4_k\to \Spec k[P]$ be the induced morphism of log schemes.  Let $q:\Spec k\ra\mathbb{A}^4$ be the point corresponding to the map $k[\N^4]\ra k$ sending $e_1$ and $e_2$ to 1, and $e_3$ and $e_4$ to $0$.  The induced morphism $\bbar{\MM}_{P,f(\bar{q})}\ra \bbar{\MM}_{\N^4,\bar{q}}$ is equal to $i':P\longrightarrow\N^2$ with 
\[
i'(x)=(1,0),\quad i'(y) = (0,1),\quad i'(z) = (1,0),\quad i'(w) = (0,1).
\]
This morphism is not a minimal free resolution; it is not even injective.  However, $i'$ is a sliced resolution. This general phenomenon is the content of the following proposition which generalizes \cite[Prop 2.13]{mfr}.
}
\end{example}

\begin{proposition}
\label{prop:2.12}
Let $P$ be a  toric sharp monoid and let $i:P\to F$ be a minimal free resolution.  If $R$ is a ring, then the induced morphism $f:\Spec R[F]\to \Spec R[P]$ of log schemes is a sliced resolution.
\end{proposition}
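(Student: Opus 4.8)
The plan is to reduce the statement to a fiberwise computation and then identify each fiber's log structure using Proposition~\ref{prop:face}. Since being a sliced resolution is checked at geometric points, I would fix a geometric point $\bar{y}$ of $\Spec R[F]$ lying over a geometric point $\bar{x}$ of $\Spec R[P]$ and analyze the induced map on characteristic monoids $\bbar{\MM}_{P,\bar{x}}\to\bbar{\MM}_{F,\bar{y}}$. The point $\bar{x}$ corresponds to a prime of $R[P]$, and the standard log structure $\MM_P$ has the property that the characteristic monoid at $\bar{x}$ is $P/P_0$, where $P_0$ is the face of $P$ consisting of those $p$ that map to a unit (equivalently, a nonzero element) in the residue field; similarly $\bbar{\MM}_{F,\bar{y}}=F/F_1$ for the analogous face $F_1$ of $F$. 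So the first step is to make these identifications precise and reduce to understanding the monoid map $P/P_0\to F/F_1$ induced by $i$.

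Next I would relate $F_1$ to $P_0$. Because $f$ is a morphism of log schemes and $\bar y$ lies over $\bar x$, the face $F_1$ must contain the face of $F$ generated by $i(P_0)$ — write $F_0$ for that face. Thus $F_1$ is a face of $F$ containing $F_0$, and the map factors as $P/P_0\to F/F_0\to F/F_1$. By Proposition~\ref{prop:face}, the first arrow $P/P_0\to F/F_0$ is a minimal free resolution of the toric sharp monoid $P/P_0$. It then remains to check that $F/F_0\to F/F_1$ is (an isomorphism onto) a quotient of $F/F_0$ by a face meeting the image of $P/P_0$ only in $0$: indeed $F_1/F_0$ is a face of the free monoid $F/F_0$, and its intersection with the image of $P/P_0$ is trivial precisely because $P_0$ was chosen to be the full preimage in $P$ of the units at $\bar x$, so no nonzero element of $P/P_0$ can map into the face $F_1/F_0$ (such an element would have to already lie in $P_0$). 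This exhibits $P/P_0\to F/F_1$ as a sliced resolution in the sense of the definition, with the role of ``$F$'' played by the minimal free resolution $F/F_0$ of $P/P_0$ and the role of ``$H$'' by $F_1/F_0$.

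The main obstacle I anticipate is the bookkeeping in the previous step: verifying cleanly that $P_0 = i^{-1}(F_1)$ as faces of $P$ (not merely $P_0\subseteq i^{-1}(F_1)$), which is what guarantees $i(P/P_0)\cap (F_1/F_0)=0$. This comes down to the compatibility between the standard log structures on $\Spec R[P]$ and $\Spec R[F]$ under $f$: the face of $F$ defining the log structure at $\bar y$ is exactly the set of monomials in $F$ that are invertible in $\kappa(\bar y)$, and its preimage under $i\colon P\to F$ is the set of monomials in $P$ invertible in $\kappa(\bar x)$ since $f^\sharp$ sends $e^p$ to $e^{i(p)}$. Once this is in hand, everything else is formal from Proposition~\ref{prop:face} and the definition of sliced resolution. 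I would also remark that this recovers \cite[Prop 2.13]{mfr} in the simplicial case, since there Lemma~\ref{l:all-definitions-agree} identifies sliced resolutions with minimal free resolutions.
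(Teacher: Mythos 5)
Your proposal is correct and follows essentially the same route as the paper: identify the stalk map as $P/P_0\to F/H$ with $P_0=i^{-1}(H)$, factor it through the face $F_0$ of $F$ generated by $i(P_0)$, invoke Proposition~\ref{prop:face} for $P/P_0\to F/F_0$, and observe that $(H/F_0)\cap(P/P_0)=0$. The extra care you take in verifying $P_0=i^{-1}(H)$ (via locality of the induced map on local rings) is the point the paper handles by simply defining $P_0=H\cap P$.
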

\begin{proof}
Let $\bar{t}$ be a geometric point of $\Spec R[F]$ and let $\mf{p}$ be the corresponding prime ideal of $R[F]$.  
Let $H$ be the face of $F$ consisting of elements which map to units under 
$F\to R[F]\to R[F]_{\mf{p}}$.  Then $\bbar{\MM}_{P,f(\bar{t})}\to \bbar{\MM}_{F,\bar{t}}$ is 
given by the natural map $\eta:P/P_0\to F/H$, where $P_0=H\cap P$.  If we let $F_0$ be the face of $F$ 
generated by $P_0$, we see $F_0\subset H$ and so $\eta$ factors as 
\[
P/P_0\stackrel{\bar{\imath}}{\longrightarrow} F/F_0\longrightarrow F/H.
\]
By Proposition \ref{prop:face}, we see that $\bar{\imath}$ is a minimal free resolution.  Since $(H/F_0)\cap (P/P_0)=0$, we see 
then that $\eta$ is a sliced resolution.
\end{proof}

\begin{proposition}
\label{prop:2.17}
Let $P$ be a  toric sharp monoid and $i:P\to F$ an injective morphism to a free monoid $F$.  
Let $R$ be a ring and let $(f,h):(T,\MM)\to (\Spec R[P],\MM_P)$ 
be a morphism of fine log schemes.  If we have a commutative diagram 
\[
\xymatrix{
P\ar[r]^i\ar[d]_-{\bbar{\pi}} & F\ar[d]^\gamma\\
f^{-1}\bbar{\MM}_{P,\bar{s}}\ar[r]^{\bar{h}_{\bar{s}}} & \bbar{\MM}_{\bar{s}}
}
\]
and $\gamma$ \'etale locally lifts to a chart, then in an fppf neighborhood of $\bar{s}$, there is a chart $\epsilon:F\to\MM$ making the diagram
\[
\xymatrix{
P\ar[r]^i\ar[d]_-{\pi} & F\ar[d]^\epsilon\\
f^*\MM_P\ar[r]^h & \MM
}
\]
commute such that $\bar{\epsilon}_{\bar{s}}=\gamma$.
\end{proposition}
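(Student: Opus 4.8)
The plan is to obtain $\epsilon$ in two moves: first use the hypothesis to choose \emph{some} chart of $\MM$ lifting $\gamma$ on an \'etale neighborhood of $\bar{s}$, and then twist it by a unit-valued homomorphism of $F$ so that the square involving $i$ and $h$ commutes on the nose. The twist will only exist after an fppf base change, and this is precisely what forces the conclusion to be fppf-local rather than \'etale-local.

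First I would pick an \'etale neighborhood $U\to T$ of $\bar{s}$ together with a chart $c\colon F\to\MM|_U$ satisfying $\bar{c}_{\bar{s}}=\gamma$; such a pair exists because $\gamma$ \'etale-locally lifts to a chart. Writing $\pi\colon P\to f^*\MM_P$ for the pullback of the canonical chart $P\to\MM_P$, I would then compare the two monoid homomorphisms $c\circ i$ and $h\circ\pi$ from $P$ to $\Gamma(U,\MM)$. By the commutativity of the first diagram together with $\bar{c}_{\bar{s}}=\gamma$, their images in $\bbar{\MM}$ have the same germ at $\bar{s}$; since $P$ is finitely generated and germs of sections of the \'etale sheaf $\bbar{\MM}$ agreeing at $\bar{s}$ agree on a neighborhood, I may shrink $U$ so that $c\circ i$ and $h\circ\pi$ have the same image in $\Gamma(U,\bbar{\MM})$. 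Because $\MM$ is integral the inclusion $\OO_T^*\hookrightarrow\MM$ lets me divide one by the other: there is a unique map $u\colon P\to\Gamma(U,\OO_T^*)$ (a monoid homomorphism because the surrounding maps are and $\MM$ is cancellative) with $h(\pi(p))=u(p)\cdot c(i(p))$ for every $p\in P$.

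The hard part is to extend $u$ along $i$ to a unit-valued homomorphism of $F$. Since $i$ is an injective morphism of integral monoids, $i^{gp}$ is injective, and since $P$ is toric the image $N:=i^{gp}(P^{gp})$ is a finitely generated free subgroup of $F^{gp}\cong\Z^n$. Using Smith normal form I would choose a basis $e_1,\dots,e_n$ of $F^{gp}$ and integers $d_1,\dots,d_r>0$ (with $r=\rank N$) so that $d_1e_1,\dots,d_re_r$ is a basis of $N$; then extending $u\circ(i^{gp})^{-1}\colon N\to\Gamma(U,\OO_T^*)$ over all of $F^{gp}$ reduces to adjoining $d_j$-th roots of finitely many units. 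Adjoining these roots defines a finite, flat, finitely presented and surjective --- hence fppf --- cover $U'\to U$, over which there is a homomorphism $v\colon F^{gp}\to\Gamma(U',\OO_T^*)$ with $v\circ i^{gp}=u$. This fppf passage is unavoidable: as soon as $\coker i^{gp}$ has torsion (for instance whenever $i$ is close with $\rank F=\rank P^{gp}$) no such $v$ exists \'etale-locally.

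Finally I would set $\epsilon:=(v|_F)\cdot c\colon F\to\MM|_{U'}$, the pointwise product in $\MM$ of $c$ with the restriction of $v$ to $F\subset F^{gp}$. Twisting a chart by a unit-valued homomorphism again yields a chart for the same log structure, so $\epsilon$ is a chart; since $v|_F$ maps to $0$ in $\bbar{\MM}$ one gets $\bar{\epsilon}_{\bar{s}}=\bar{c}_{\bar{s}}=\gamma$; and for $p\in P$ one computes
\[
\epsilon(i(p))=v(i(p))\,c(i(p))=u(p)\,c(i(p))=h(\pi(p)),
\]
so the required square commutes. I expect the extension step of the third paragraph to be the only real obstacle; the remaining points --- that $u$ is a well-defined homomorphism (integrality of fine log structures), that extracting roots of units is an fppf operation, and that twisting a chart by units preserves the chart property --- are routine.
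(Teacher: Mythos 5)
Your proof is correct and is essentially the paper's argument (itself following Iwanari): both reduce to a Smith normal form for $i^{gp}$ and then extract roots of the finitely many unit discrepancies by a finite flat base change, which is exactly the fppf-local step. The only difference is packaging: you twist an honest chart lifting $\gamma$ by a unit-valued homomorphism of $F$, whereas the paper builds the map $F\to\MM_{\bar{s}}$ directly on stalks using $\MM_{\bar{s}}=\MM^{gp}_{\bar{s}}\times_{\bbar{\MM}^{gp}_{\bar{s}}}\bbar{\MM}_{\bar{s}}$ and then spreads it out to a chart; the mathematical content is the same.
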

\begin{proof}
We follow the proof of \cite[Prop 2.17]{mfr}. We can find bases $e_1,\dots,e_r$ of $P^{gp}$ and $e'_1,\dots,e'_d$ of $F^{gp}$, and positive integers $\lambda_1,\dots,\lambda_r$ such that $i(e_j)=\lambda_je'_j$. Let $n_j\in \MM^{gp}_{\bar{s}}$ such that $\bbar{n}_j=\gamma^{gp}(e'_j)$. Then for $j\leq r$, 
\[
(h\pi)^{gp}_{\bar{s}}(e_j)=u_j+\lambda_jn_j
\]
for some $u_j\in\OO^*_{T,\bar{s}}$. After base changing to $\OO_{T,\bar{s}}[T_1,\dots,T_{r}]/(T_i^{\lambda_i}-u_i)$, we have a map $\eta:F^{gp}\to \MM^{gp}_{\bar{s}}$ defined by $\eta(e'_j)=T_j+n_j$ for $j\leq r$, and $\eta(e'_j)=n_j$ for $r<j\leq d$. Note that $\bbar{\eta}=\gamma^{gp}$ and $\eta i^{gp}=(h\pi)^{gp}_{\bar{s}}$. Let
\[
\beta:F\longrightarrow \MM^{gp}_{\bar{s}}\times_{\bbar{\MM}^{gp}_{\bar{s}}} \bbar{\MM}_{\bar{s}}=\MM_{\bar{s}}
\]
be defined by $\beta(f)=(\eta(f),\gamma(f))$. Then $\beta i = h_{\bar{s}}\pi$. Since $P$ and $F$ are finitely-generated monoids, $\beta$ extends to a chart $\epsilon$ in an fppf neighborhood of $\bar{s}$ such that $\epsilon i = h\pi$.
\end{proof}

\section{The Stacky Resolution Theorem}
\label{sec:stack}
Throughout this section, $k$ is a field and $S=\Spec k$ has the trivial log structure.  Given an fs log scheme $X$, we define a fibered category $\mf{X}$ over $X$-schemes as follows.  Objects of $\mf{X}(T)$ are pairs $(\NN,f)$, where $\NN$ is a fine log structure on $T$ and $f:(T,\NN)\rightarrow (X,\MM_X)$ is a sliced resolution whose map on underlying schemes is the structure morphism to $X$.  A morphism $(\NN,f)\ra(\NN',f')$ of objects of $\mf{X}(T)$ is a strict morphism $h:(T,\NN)\rightarrow (T,\NN')$ such that $f=f'h$.

Then $\mf{X}$ is a stack on the \'etale site of $X$ (in fact on the fppf site by \cite[Thm A.1]{log}), although \emph{a priori} it is not clear that $\mf{X}$ is an algebraic stack.  Note that upon showing $\mf{X}$ is algebraic, it inherits a natural log structure $\MM_{\mf{X}}$ defined as follows: given a morphism $g:T\ra\mf{X}$ corresponding to $(\NN,f)$, we define $g^*\MM_{\mf{X}}$ to be $\NN$.

\begin{proposition}
\label{prop:comp}
Let $P$ be a  toric sharp monoid and $i:P\to F$ a minimal free resolution.  Let $R$ be a ring and $G$ be the group scheme $D(F^{gp}/P^{gp})$ over $R$.  If $X=\Spec R[P]$, then $\mf{X}$ is isomorphic to $\mathcal{Y}:=[\Spec R[F]/G]$ over $X$.
\end{proposition}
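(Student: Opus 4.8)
The plan is to produce an explicit equivalence $\Phi\colon\mathcal{Y}\to\mathfrak{X}$ over $X$, and check it is fully faithful and essentially surjective; the essential surjectivity will rest on Proposition \ref{prop:2.17} (lifting charts) and Proposition \ref{prop:face} (minimal free resolutions of faces).

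First I would set up $\Phi$. Since $i$ is exact it is injective (as noted after Definition \ref{def:mfr}), so $R[P]\hookrightarrow R[F]$, and the grading of $R[F]$ by $F^{gp}/P^{gp}$ gives the $G$-action on $\Spec R[F]$ over $X$. Because the characters spanning $\OO(G)$ are units, the pullback of the chart $F\to\MM_F$ along the action morphism differs from its pullback along the projection only by a unit; hence the $G$-action lifts canonically to $\MM_F$, and since $[i(p)]=0$ for $p\in P$, the morphism $(\Spec R[F],\MM_F)\to(X,\MM_P)$ induced by $i$ is $G$-equivariant for the trivial action on $(X,\MM_P)$. By Proposition \ref{prop:2.12} this morphism is a sliced resolution, hence an object of $\mathfrak{X}(\Spec R[F])$; its $G$-equivariance structure together with the fact that $\mathfrak{X}$ is a stack on the fppf site of $X$ descends it to the desired $\Phi\colon\mathcal{Y}=[\Spec R[F]/G]\to\mathfrak{X}$ over $X$.

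Next I would prove essential surjectivity, fppf-locally on the base. Let $(\NN,f)\in\mathfrak{X}(T)$ and let $\bar s$ be a geometric point of $T$; write $\bbar{\MM}_{X,f(\bar s)}=P/Q$ for the face $Q\subset P$ of elements invertible at $f(\bar s)$, and let $F_Q$ be the face of $F$ it generates. By Proposition \ref{prop:face} the minimal free resolution of $P/Q$ is $P/Q\to F/F_Q$, so by uniqueness of minimal free resolutions the sliced resolution $P/Q\to\bbar{\NN}_{\bar s}$ identifies $\bbar{\NN}_{\bar s}$ with $F/H$ for a face $H\supseteq F_Q$ of $F$; composing gives a canonical surjection $\gamma\colon F\to\bbar{\NN}_{\bar s}$ with $\gamma i=\pi$, where $\pi\colon P\to\bbar{\NN}_{\bar s}$ is the pullback of the chart of $\MM_P$. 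Since $\gamma$ is the quotient by the face $H$, the preimage of $\OO^*$ under any lift $F\to\NN_{\bar s}$ of $\gamma$ (such a lift exists since $F$ is free) is exactly $H$, so that lift extends to a chart of $\NN$ in an étale neighborhood of $\bar s$. Proposition \ref{prop:2.17} then yields, in an fppf neighborhood $T'$ of $\bar s$, a chart $\epsilon\colon F\to\NN$ with $\epsilon i=\pi$ and $\bbar{\epsilon}_{\bar s}=\gamma$. The chart $\epsilon$ is the same datum as a morphism $g\colon T'\to\Spec R[F]$ over $X$, and the induced strict morphism $g^*\MM_F\to\NN|_{T'}$ is an isomorphism: both characteristic sheaves equal $F$ modulo the face of units at the source point, and that face is $H$ because $\bbar{\epsilon}_{\bar s}=\gamma$. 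Hence $(\NN,f)|_{T'}$ is isomorphic to $\Phi$ applied to the object $(G\times T'\to T',\,g)$ of $\mathcal{Y}(T')$.

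Finally I would check full faithfulness. Since $\underline{\Hom}$ in both $\mathcal{Y}$ and $\mathfrak{X}$ is an fppf sheaf on the base, I may assume the two objects are $(G\times T\to T,g_1)$ and $(G\times T\to T,g_2)$; on the $\mathcal{Y}$ side the morphism sheaf is $\{u\in G(T):u\cdot g_2=g_1\}$. On the $\mathfrak{X}$ side a morphism is a strict isomorphism $g_1^*\MM_F\to g_2^*\MM_F$ over $(X,\MM_P)$, and the key point is a rigidity statement: the induced isomorphism of characteristic sheaves is forced, since at each geometric point it is an isomorphism $F/H_1\to F/H_2$ of free monoids over $P/Q$, i.e.\ a permutation of coordinates commuting with the distinct extremal functionals $v_i$, hence the identity (so $H_1=H_2$). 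Therefore any such strict isomorphism carries the chart $F\to g_1^*\MM_F$ to the chart $F\to g_2^*\MM_F$ up to a unique unit $u\colon F\to\OO_T^*$; compatibility with the $\MM_P$-structure forces $u\circ i=1$, so $u\in\Hom(F^{gp}/P^{gp},\OO_T^*)=G(T)$, and unwinding the condition $f_1=f_2\circ(\,\cdot\,)$ recovers $u\cdot g_2=g_1$. Thus the two morphism sheaves are identified compatibly, $\Phi$ is fully faithful, and with the previous paragraph $\Phi$ is an equivalence over $X$. The conceptual crux is the construction of the canonical $\gamma$ and the verification that it lifts to a chart, so that Proposition \ref{prop:2.17} applies; the bookkeeping in the last paragraph is routine once the rigidity statement — that $F/H$ has no nontrivial automorphism over $P/Q$ — is in hand.
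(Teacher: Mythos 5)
Your proof is correct and follows essentially the same route as the paper: the same functor (forgetting the chart data), essential surjectivity via Proposition \ref{prop:face} and Proposition \ref{prop:2.17} after identifying $\bbar{\NN}_{\bar s}$ with $F/H$, and full faithfulness via the rigidity coming from the distinctness of the extremal rays $v_i$. The only cosmetic difference is that you build $\Phi$ by fppf descent from the trivial torsor over $\Spec R[F]$ and compute Isom-sheaves by hand, where the paper instead invokes Olsson's moduli description of $[\Spec R[F]/G]$ and takes $\Phi$ to be the functor forgetting $\gamma$.
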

\begin{proof}
Let $h:\mathcal{Y}\to X$ and $\pi:\Spec R[F]\to\mathcal{Y}$ be the natural morphisms.  By \cite[Prop 5.20]{log}, the stack $\mathcal{Y}$ has the following moduli interpretation.  The fiber over $f:T\to X$ is the groupoid of triples $(\NN,\eta,\gamma)$, where $\NN$ is a fine log structure on $T$, where $\gamma:F\to\bbar{\NN}$ is a morphism which \'etale locally lifts to a chart, and where $\eta:f^*\MM_P\to\NN$ is a morphism of log structures such that
\[
\xymatrix{
P\ar[r]^i\ar[d] & F\ar[d]^\gamma\\
f^{-1}\bbar{\MM}_P\ar[r]^{\bar{\eta}} & \bbar{\NN}
}
\]
commutes.

We show that $\eta$ is a sliced resolution.  Let $g:T\to\mathcal{Y}$ be the morphism representing $(\NN,\eta,\gamma)$ and let $\bar{t}$ be a geometric point of $T$.  Since $\pi$ is surjective, we have a dotted arrow making the diagram
\[
\xymatrix{
&  & \Spec R[F]\ar[d]^\pi\\
\bar{t}\ar[r]\ar@{-->}[urr] & T\ar[r]^g\ar[dr]_f & \mathcal{Y}\ar[d]^h\\
&  & \Spec R[P]
}
\]
commute.  Recall from the proof of \cite[Prop 5.20]{log} that $\eta$ is simply the pullback under $g$ of the natural morphism $h^*\MM_P\to\MM_{\mathcal{Y}}$.  Therefore, $\bar{\eta}_{\bar{t}}$ is the morphism 
\[
\bbar{\MM}_{P,h\pi(\bar{t})}=(\pi^*h^*\bbar{\MM}_P)_{\bar{t}}\longrightarrow (\pi^*\bbar{\MM}_{\mathcal{Y}})_{\bar{t}}=
\bbar{\MM}_{F,\bar{t}},
\]
which is a sliced resolution by Proposition \ref{prop:2.12}.

We therefore have a morphism $\Phi:\mathcal{Y}\to\mf{X}$ of stacks which forgets $\gamma$.  To prove full faithfulness of $\Phi$, we must show that if 
\[
(\NN_j,\eta_j:f^*\MM_P\longrightarrow\NN_j,\gamma_j:F\longrightarrow\bbar{\NN}_j)
\]
are objects of $\mathcal{Y}$ for $j=1,2$, then any isomorphism of log structures $\xi:\NN_1\to\NN_2$ such that $\xi\eta_1=\eta_2$ automatically satisfies 
$\bar{\xi}\gamma_1=\gamma_2$.  The equality $\bar{\xi}\gamma_1=\gamma_2$ can be checked on stalks.  Let $t\in T$.  Since the $\gamma_j$ \'etale locally lift to charts $\epsilon_j:F\to\NN_j$, we see that $\bbar{\NN}_{1,\bar{t}}$ and $\bbar{\NN}_{2,\bar{t}}$ are both free, as $\gamma_{j,\bar{t}}=\bar{\epsilon}_{j,\bar{t}}$ is a quotient by a face.  They have the same rank $r$ since $\bar{\xi}_{\bar{t}}$ is an isomorphism.

Let $e_1,\dots,e_d$ be the irreducible elements of $F$.  Since any two minimal free resolutions of $P$ are uniquely isomorphic over $P$, we can assume $i(p)=\sum v_i(p)e_i$, where $v_1,\dots,v_d$ are the extremal rays of $\CV$.  Let $e'_1,\dots,e'_r$ be the irreducible elements of $\bbar{\NN}_{1,\bar{t}}$ and let $e''_1,\dots,e''_r$ be the irreducible elements of $\bbar{\NN}_{2,\bar{t}}$.  Then for $j=1,2$, there exists a subset $S_j$ of $\{1,\dots,d\}$ and a bijection $\sigma_j:S_j\ra\{1,\dots,r\}$ such that
\[
\gamma_{1,\bar{t}}(e_i)=\left\{
\begin{array}{cl}
e'_{\sigma_1(i)}, & i\in S_1\\
0, & i\notin S_1
\end{array}
\right.
\quad\textrm{and}\quad
\gamma_{2,\bar{t}}(e_i)=\left\{
\begin{array}{cl}
e''_{\sigma_2(i)}, & i\in S_2\\
0, & i\notin S_2
\end{array}
\right.
\]
We also have a bijection $\tau:\{1,\dots,r\}\ra\{1,\dots,r\}$ such that $\bar{\xi}_{\bar{t}}(e'_i)=e''_{\tau(i)}$.  To show $(\bar{\xi}\gamma_1)_{\bar{t}}=\gamma_{2,\bar{t}}$, we must show $S_1=S_2$ and $\tau\sigma_1=\sigma_2$.  While we do not yet know $(\bar{\xi}\gamma_1)_{\bar{t}}=\gamma_{2,\bar{t}}$, we do know that this equality holds if we precompose with $i$.  That is, we have
\[
\sum_{i\in S_1}v_i(p)e''_{\tau\sigma_1(i)}=\sum_{i\in S_2}v_i(p)e''_{\sigma_2(i)}
\]
for all $p\in P$.  Comparing the $e''_j$ coordinates, we see
\[
v_{(\tau\sigma_1)^{-1}(j)}(p)=v_{\sigma_2^{-1}(j)}(p)
\]
for all $p\in P$.  Since the $v_i$ are the distinct extremal rays of $\CV$, we must have $(\tau\sigma_1)^{-1}(j)=\sigma_2^{-1}(j)$. Since this is true for all $j$, we have $S_1=S_2$ and $\tau\sigma_1=\sigma_2$.  This completes the proof that $\Phi$ is fully faithful.

We now prove essential surjectivity of $\Phi$.  Let $f:(T,\NN)\to(X,\MM_P)$ be a sliced resolution.  By full faithfullness, we need only show that $f$ is fppf locally in the image of $\Phi$.  Let $\bar{t}$ be a geometric point of $T$.  Then $\bbar{\MM}_{P,\bar{f(t)}}=P/P_0$ for some face $P_0$ of $P$.  If $F_0$ is the face of $F$ generated by $P_0$, then by Proposition \ref{prop:face}, the natural morphism $P/P_0\to F/F_0$ is a minimal free resolution.  Since $\bar{f}_{\bar{t}}:\bbar{\MM}_{P,\bar{f(t)}}\to \bbar{\NN}_{\bar{t}}$ is a sliced resolution by assumption, it has the form 
\[
P/P_0\longrightarrow F/F_0\longrightarrow (F/F_0)/F_1
\]
for some face $F_1$ of $F/F_0$ such that $F_1\cap (P/P_0)=0$.  Letting $H$ be the face of $F$ such that $H/F_0=F_1$, we see that
\[
P\longrightarrow f^{-1}\bbar{\MM}_{P,\bar{t}}\longrightarrow \bbar{\NN}_{\bar{t}}
\] 
factors as 
\[
P\longrightarrow F\longrightarrow F/H.
\]
Proposition \ref{prop:2.17} then shows that fppf locally, $f$ is in the image of $\Phi$.
\end{proof}
\begin{theorem}
\label{thm:main}
Let $X$ be an fs log scheme which is log smooth over $S$.  Then $\mf{X}$ is a smooth log smooth Artin stack over $S$ and the structure map $f:\mf{X}\to X$ is a good moduli space morphism.  Moreover, the base change of $f$ to the smooth locus of $X$ is an isomorphism.
\end{theorem}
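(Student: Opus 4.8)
The plan is to reduce every assertion to the explicit local model provided by Proposition \ref{prop:comp}. Algebraicity (which is part of ``Artin stack''), smoothness, log smoothness, the good moduli space property, and being an isomorphism over the smooth locus $X^{sm}$ are all local on $X$, so I would work \'etale-locally on $X$. As $X$ is fs and log smooth over $S$ with $S$ carrying the trivial log structure, Kato's structure theorem (\cite{kkato}) supplies, \'etale-locally on $X$, a chart $P\to\MM_X$ with $P=\bbar{\MM}_{X,\bar x}$ for which the induced map $X\to\Spec k[P]$ is smooth in the classical sense. Since $\bbar{\MM}_{X,\bar x}$ is sharp, fine and saturated, its groupification is torsion free, so $P$ is toric. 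Shrinking further, $X\to\Spec k[P]$ factors through a \emph{strict} \'etale morphism $X\to\Spec R[P]$ with $R=k[t_1,\dots,t_n]$. A direct check shows that the formation of $\mf X$ commutes with strict \'etale base change, so Proposition \ref{prop:comp} gives
\[
\mf X\;\cong\;[\Spec R[F]/G]\times_{\Spec R[P]}X\;=\;[Z/G],\qquad Z:=\Spec R[F]\times_{\Spec R[P]}X,
\]
where $i:P\to F$ is a minimal free resolution and $G=D(F^{gp}/P^{gp})$; in particular $\mf X$ is algebraic.

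Smoothness and log smoothness fall out of this model. The scheme $Z$ is \'etale over $\Spec R[F]=\mathbb A^{d+n}_k$, hence smooth over $S$, and $[Z/G]$ is smooth over $S$ because it is the quotient of a smooth $S$-scheme by a diagonalizable group scheme: $BG$ is smooth over $S$ (it has a smooth atlas by a torus) and $[Z/G]\to BG$ is smooth, being fppf-locally a projection with smooth fibre $Z$. For log smoothness, $\MM_{\mf X}$ pulls back along the strict cover $Z\to\mf X$ to the log structure induced by $\MM_F$ on $\Spec R[F]$, and $(\Spec R[F],\MM_F)\to S$ is log smooth, being $\mathbb A^n_k$ times the toric log scheme attached to the free monoid $F$; log smoothness then descends along the strict fppf cover $Z\to\mf X$.

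For the good moduli space assertion I would use that a good moduli space morphism remains one after flat base change and that the property is \'etale-local on the target, reducing to $X=\Spec R[P]$ and $\mf X=[\Spec R[F]/G]$ with structure map $h$. Then $h$ is cohomologically affine, as $G$ is diagonalizable and hence linearly reductive and $\Spec R[F]$ is affine. The $G$-action on $R[F]$ is the $F^{gp}/P^{gp}$-grading, so $h_*\OO_{\mf X}=(R[F])^G=R[F\cap P^{gp}]$, the intersection taken inside $F^{gp}$; exactness of $i$ says precisely that $F\cap P^{gp}=i(P)$, whence $h_*\OO_{\mf X}=R[P]=\OO_X$ and $\OO_X\to h_*\OO_{\mf X}$ is an isomorphism. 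Thus $f$ is a good moduli space morphism.

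Finally, $f$ is an isomorphism over $X^{sm}$: since $X$ is strict \'etale over $\Spec R[P]$, it is smooth over $k$ precisely at the points $\bar x$ where $\bbar{\MM}_{X,\bar x}$ is free, and the only sliced resolution of a free monoid $\mathbb N^r$ is the identity — its minimal free resolution is the identity, and slicing by a face $H$ with $\mathbb N^r\cap H=0$ forces $H=0$. Therefore every object of $\mf X(T)$ over a $T\to X^{sm}$ is the pulled-back log structure equipped with the strict structure morphism, unique up to unique isomorphism, so $\mf X\times_X X^{sm}\cong X^{sm}$. Granting Proposition \ref{prop:comp}, the remaining steps are essentially formal; the point I would be most careful about is the compatibility of the construction $X\mapsto\mf X$ with strict base change, since that is what justifies replacing $X$ by the local model $\Spec R[P]$ throughout.
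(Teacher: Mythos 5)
Your proposal is correct and follows essentially the same route as the paper: reduce via Kato's structure theorem for log smooth morphisms to a strict (étale/smooth) cover mapping to the local model $\Spec k[P]$, invoke Proposition \ref{prop:comp} to identify $\mf X$ locally with $[\Spec R[F]/G]$, deduce the good moduli space property from the invariant-ring computation $R[F]^G=R[P]$ (the content of the cited result of Alper), and use that the characteristic monoid is free on $X^{sm}$ so sliced resolutions there are strict and automorphism-free. The only differences are cosmetic (you factor the smooth strict chart further into a strict étale map to $\Spec k[t_1,\dots,t_n][P]$ and spell out the compatibility of $\mf X$ with strict base change, which the paper uses implicitly).
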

\begin{proof}
To check that $\mf{X}$ is a smooth log smooth Artin stack over $S$, it suffices to look strict \'etale locally on $X$.  Since $X$ is log smooth over $S$, Theorem 4.8 and Lemma 7.1 of \cite{logdef} show that there is a strict \'etale cover $h:Y\ra X$ and a smooth strict morphism $g:Y\ra Z$, where $Z=\Spec k[P]$ with $P$ a toric sharp monoid.  Let $i:P\rightarrow F$ be a minimal free resolution.  Then Proposition \ref{prop:comp} shows that $\mf{X}\times_X Y$ is isomorphic to $\mathcal{Y}\times_Z Y$, where $G=D(F^{gp}/P^{gp})$ and $\mathcal{Y}=[\Spec k[F] / G]$.  Since $\mathcal{Y}$ is a smooth log smooth Artin stack over $S$, it follows that $\mf{X}$ is as well.

To show $f$ is a good moduli space morphism, by \cite[Prop 4.7 (ii)]{alper} it suffices to show this is true for $\mf{X}\times_X Y\ra Y$.  Since $k[F]^G=k[P]$, the map $\mathcal{Y}\ra Z$ is a good moduli space morphism by \cite[Ex 8.3]{alper}.  Then $\mathcal{Y}\times_Z Y\ra Y$ is a good moduli space morphism, as such maps are stable under base change by \cite[Prop 4.7 (i)]{alper}.

Lastly, if $x\in X^{sm}$, then $\bbar{\MM}_{X,\bar{x}}$ is free by \cite[Lemma 3.8]{mfr}. So, every sliced resolution $(T,\NN)\to (X^{sm},\MM_X|_{X^{sm}})$ is strict and therefore has no automorphisms. This shows that the base change of $f$ to $X^{sm}$ is an isomorphism.
\end{proof}
\begin{remark}
\label{rmk:tame}
\emph{We can improve slightly on \cite[Thm 3.3(2)]{mfr}.  If $X$ is a good toroidal 
embedding (\cite[1.2]{mfr}), then $\mf{X}$ is a tame Artin stack in the sense of 
\cite{tame}.  It follows from \cite[Lemma 5.5]{cst} that \cite[Thm 3.3(2)]{mfr} 
still holds for such $X$.  That is, we do not need to assume that $X$ is a tame 
toroidal embedding (\cite[1.2]{mfr}).}
\end{remark}
We end this section by showing that Iwanarai's stack of admissible free resolutions can also be generalized to the case when the charts of $X$ are given by toric monoids which are not necessarily simplicial.  Throughout the rest of this section, $X$ is an fs log scheme which is log smooth over $S$.
\begin{definition}
\label{def:admfrqfr}
\emph{Let $P$ be a toric sharp monoid, $i:P\to F$ be a minimal free resolution, and $b_i$ be positive integers indexed by the irreducible elements $F$. Then $i':P\to F'$ is a $(b_i)$-\emph{free resolution} if there is an isomorphism $\xi:F'\to F$ such that $\xi i'$ equals}
\[
P\stackrel{i}{\longrightarrow} F\stackrel{\cdot(b_i)}{\longrightarrow} F.
\]
\emph{We say that $P\to F''$ is a $(b_i)$-\emph{sliced resolution} if there is a $(b_i)$-free resolution $i':P\to F'$, a face $H$ of $F'$ such that $i'(P)\cap H=0$, and isomorphism $F''\to F'/H$ over $P$.}
\end{definition}

To define the corresponding notions for morphisms of log schemes, we first generalize \cite[Prop 3.1]{mfr}. Recall from the Notation section that $X^{triv}$ denotes the locus of $X$ where the log structure is trivial.
\begin{proposition}
\label{prop:components}
For every point $x\in X$, there is a natural map from the set $I$ of irreducible elements of the minimal free resolution of $\bbar{\MM}_{X,\bar{x}}$ to the set of irreducible components of $X\setminus X^{triv}$ that contain $x$.
\end{proposition}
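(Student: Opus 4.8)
The plan is to reduce to the local picture and use the explicit description of minimal free resolutions in terms of the dual cone. First I would note that the claim is local on $X$, so by the structure theory for log smooth schemes (Theorem 4.8 and Lemma 7.1 of \cite{logdef}, as used in the proof of Theorem \ref{thm:main}) we may replace $X$ by a strict \'etale neighborhood admitting a smooth strict morphism to $Z = \Spec k[P]$ for a toric sharp monoid $P$; since the log structure, its triviality locus, the complement $X \setminus X^{triv}$, and its irreducible components through $x$ are all compatible with smooth strict morphisms (in particular with smooth base change), it suffices to treat $X = \Spec k[P]$ itself and a point $x$ corresponding to a prime $\mf{p}$. Writing $P_0 = \{p \in P : p \mapsto \text{unit in } \OO_{X,\bar x}\}$, the face $P_0$ is cut out by a subset of the extremal rays of $\CV$, and $\bbar{\MM}_{X,\bar x} = P/P_0$. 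The irreducible components of $X \setminus X^{triv}$ passing through $x$ correspond to the minimal primes containing $\mf{p}$ among the primes defining the boundary divisor; concretely, $X \setminus X^{triv} = \bigcup_i V(\mathfrak{q}_i)$ where the $\mathfrak{q}_i$ are the monomial primes corresponding to the facets of $C(P)$ (equivalently the extremal rays $v_i$ of $\CV$), and the ones through $x$ are exactly those $\mathfrak{q}_i$ with $\mathfrak{q}_i \subseteq \mathfrak{p}$.

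Next I would identify the set $I$ of irreducible elements of the minimal free resolution of $\bbar{\MM}_{X,\bar x} = P/P_0$. By Proposition \ref{prop:face}, this minimal free resolution is $P/P_0 \to F(P/P_0)$, and by the analysis there its irreducible elements are indexed exactly by those extremal rays $v_i$ of $\CV$ with $v_i(P_0) = 0$ — i.e.\ the extremal rays that do \emph{not} become interior after passing to the quotient. So $I$ is naturally a subset of the set $\{v_1,\dots,v_d\}$ of extremal rays of $\CV$, namely those with $v_i(P_0) = 0$. The map I want to define sends such a $v_i$ to the irreducible component of $X \setminus X^{triv}$ corresponding to the facet of $C(P)$ dual to $v_i$ (equivalently, to the monomial prime $\mathfrak{q}_i$), and the content to verify is that $v_i(P_0) = 0$ precisely guarantees $\mathfrak{q}_i \subseteq \mathfrak{p}$, so that this component indeed passes through $x$. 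This is the crux of the argument: $P_0$ consists of the monomials invertible at $x$, and $\mathfrak{q}_i = (p \in P : v_i(p) > 0) \cdot k[P]$, so $\mathfrak{q}_i \not\subseteq \mathfrak{p}$ would force some $p$ with $v_i(p) > 0$ to be invertible at $x$, hence $p \in P_0$, contradicting $v_i(P_0) = 0$; conversely if $v_i(P_0) = 0$ fails, pick $p_0 \in P_0$ with $v_i(p_0) > 0$, and then $p_0 \in \mathfrak{q}_i$ while $p_0$ is a unit at $x$, so $\mathfrak{q}_i \not\subseteq \mathfrak{p}$. Thus the assignment $v_i \mapsto (\text{component of }V(\mathfrak{q}_i)) $ restricted to $I$ lands in the components through $x$, giving the desired natural map.

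Finally I would check naturality, i.e.\ independence of the chart and compatibility with strict \'etale localization. Since any two minimal free resolutions of a toric sharp monoid are uniquely isomorphic over the monoid (Proposition \ref{prop:can}), the indexing set $I$ and the extremal rays $v_i$ are canonically attached to $\bbar{\MM}_{X,\bar x}$; and the identification of $X \setminus X^{triv}$ with $\bigcup_i V(\mathfrak{q}_i)$ together with the correspondence between facets of $C(P)$ and codimension-one components of the boundary is preserved under smooth strict pullback, so the construction glues. The main obstacle is precisely the bookkeeping in the preceding paragraph: matching the combinatorial condition $v_i(P_0) = 0$ defining $I$ with the scheme-theoretic condition that the $i$-th boundary component contains $x$, and making sure one has correctly identified which monomial primes cut out $X \setminus X^{triv}$ (which for general, possibly non-normal-looking, toric sharp $P$ requires citing that $P = \CV{}^\vee \cap P^{gp}$, as in the proof of Proposition \ref{prop:can}). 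Everything else is routine reduction and the uniqueness already recorded above.
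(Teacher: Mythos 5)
Your proposal is correct and follows essentially the same route as the paper: reduce strict \'etale locally to a chart $\Spec k[P]$ and match irreducible elements of the minimal free resolution with torus-invariant divisors, the natural map to components of $X\setminus X^{triv}$ through $x$ coming from the localization. The only difference in execution is that the paper additionally arranges (via Lemma 7.1 of \cite{logdef}) for $x$ to map to the torus-invariant point, so that $\bbar{\MM}_{X,\bar{x}}=P$ and the bijection with extremal rays of $\CV$ is immediate, whereas you treat a general point and carry out the (correct) bookkeeping with the face $P_0$ and Proposition \ref{prop:face}.
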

\begin{proof}
It is enough to prove that $I$ is in canonical bijection with the set of irreducible components of the inverse image of $X\setminus X^{triv}$ in the strict henselization $\Spec \OO_{X,\bar{x}}$. To show this, it suffices to work with a strict \'etale neighborhood of $\bar{x}$.  By Theorem 4.8 and Lemma 7.1 of \cite{logdef}, we can therefore assume that there is a strict smooth morphism $f:X\ra \Spec k[P]$, where $P$ is a toric sharp monoid, $f(x)$ is the torus-invariant point of the toric variety $\Spec k[P]$, and the irreducible components of $X\setminus X^{triv}$ are precisely the inverse images of the torus-invariant divisors of $\Spec k[P]$.  We can therefore replace $X$ by $\Spec k[P]$ and $x$ by the torus-invariant point.  In this case, the result is clear as there is a natural bijection between the extremal rays of $\CV$ and the torus-invariant divisors of $\Spec k[P]$.
\end{proof}
\begin{definition}
\label{def:admtypebi}
\emph{Let $b_i$ be a positive integer for every irreducible component $D_i$ of $X\setminus X^{triv}$.  Given a geometric point $\bar{x}$ of $X$ with image $x\in X$, let $I(\bar{x})$ be the set of irreducible components of $X\setminus X^{triv}$ that contain $x$.  Then a morphism $f:(Y,\MM_Y)\to (X,\MM_X)$ from a fine log scheme is a $(b_i)$-\emph{free resolution}, resp.~a $(b_i)$-\emph{sliced resolution} if for all geometric points $\bar{y}$ of $Y$, the induced morphism
\[
\bbar{\MM}_{X,f(\bar{y})}\longrightarrow \bbar{\MM}_{Y,\bar{y}}
\]
is a $(b_i)_{i\in I(f(\bar{y}))}$-free resolution, resp.~a $(b_i)_{i\in I(f(\bar{y}))}$-sliced resolution.}
\end{definition}
With this definition in place, for any choice $b_i$ of positive integers indexed by the irreducible components of $X\setminus X^{triv}$, let $\mf{X}_{(b_i)}$ be the fibered category over $X$-schemes defined as follows. Objects of $\mf{X}_{(b_i)}(T)$ are pairs $(\NN,f)$, where $\NN$ is a fine log structure on $T$ and $f:(T,\NN)\rightarrow (X,\MM_X)$ is a $(b_i)$-sliced resolution whose map on underlying schemes is the structure morphism to $X$.  A morphism $(\NN,f)\ra(\NN',f')$ of objects of $\mf{X}_{(X,\MM_X)}(T)$ is a strict morphism $h:(T,\NN)\rightarrow (T',\NN')$ such that $f=f'h$.

As before, this fibered category is a stack on the fppf site by \cite[Thm A.1]{log}. After replacing the use of minimal free resolutions by $(b_i)$-free resolutions, and sliced resolution by $(b_i)$-sliced resolution, the proofs of Propositions \ref{prop:2.12} and \ref{prop:comp} yield the following two results:
\begin{proposition}
\label{prop:adm2.12}
Let $P$ be a  toric sharp monoid, $i:P\to F$ be a minimal free resolution, and $b_i$ be positive integers indexed by the irreducible elements of $F$.  If $i':P\to F'$ is a $(b_i)$-free resolution and $X=\Spec k[P]$, then the induced morphism $f:X\to\Spec k[F']$ of log schemes is a $(b_i)$-sliced resolution; here we are using the natural map of Proposition \ref{prop:components} from the set of the irreducible elements of $F$ to the set of irreducible components of $X\setminus X^{triv}$.
\end{proposition}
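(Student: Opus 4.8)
The plan is to run the proof of Proposition \ref{prop:2.12} essentially verbatim, carrying the scaling factors $b_i$ along for the ride, and to observe that they never disturb any of the face–theoretic bookkeeping because multiplication by a positive integer does not change whether a coordinate vanishes. First fix an isomorphism $\xi:F'\to F$ realizing the $(b_i)$-free resolution, and an identification $F\cong\N^d$ for which $i(p)=(v_1(p),\dots,v_d(p))$, where $v_1,\dots,v_d$ are the first lattice points on the extremal rays of $\CV$; then $\xi i'$ is the map $p\mapsto(b_1v_1(p),\dots,b_dv_d(p))$. Write $X=\Spec k[P]$ and let $f:\Spec k[F']\to X$ be the morphism of log schemes induced by $i'$. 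By Definition \ref{def:admtypebi} it suffices to check the $(b_i)$-sliced resolution condition at an arbitrary geometric point $\bar t$ of $\Spec k[F']$.

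Let $\mf p$ be the prime of $k[F']$ corresponding to $\bar t$ and let $H'\subset F'$ be the face of elements mapping to units in $k[F']_{\mf p}$; under $\xi$ this becomes a face $H=\langle e_j:j\in J\rangle$ of $\N^d$ for a unique $J\subset\{1,\dots,d\}$, since every face of $\N^d$ has this shape. Exactly as in Proposition \ref{prop:2.12}, the induced map $\bbar{\MM}_{P,f(\bar t)}\to\bbar{\MM}_{F',\bar t}$ is the natural map $\eta:P/P_0\to F'/H'$, where $P_0=\{p\in P:i'(p)\in H'\}$. Because $b_jv_j(p)=0$ exactly when $v_j(p)=0$, we have $P_0=\{p\in P:v_j(p)=0\text{ for all }j\notin J\}$, which is the very same $P_0$ that occurs in the proof of Proposition \ref{prop:2.12}, and is in particular independent of the $b_i$.

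Next I would apply Proposition \ref{prop:face}: the minimal free resolution of $P/P_0$ is $\bar{\imath}:P/P_0\to F/F_0$, where $F_0$ is the face of $F$ generated by $\{v_i:v_i(P_0)\neq 0\}$, so $F/F_0$ is free on $\{\bar v_i:v_i(P_0)=0\}$ and $\bar{\imath}(\bar p)=\sum_{v_i(P_0)=0}v_i(p)\bar v_i$. A short computation in the toric model, as in the proof of Proposition \ref{prop:components}, identifies $I(f(\bar t))$ with $\{i:v_i(P_0)=0\}$: the torus-invariant divisor $D_i\subset\Spec k[P]$ contains $f(t)$ precisely when $v_i$ vanishes on the face $P_0$ of units at $f(t)$. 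Thus, after reindexing the $b_i$ along the bijection of Proposition \ref{prop:components}, what remains is to exhibit $\eta$ as a $(b_i)_{v_i(P_0)=0}$-sliced resolution of $P/P_0$, with $b_i$ attached to $\bar v_i$.

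This I would do explicitly. Post-composing $\bar{\imath}$ with multiplication by $b_i$ in the $\bar v_i$-coordinate produces a $(b_i)_{v_i(P_0)=0}$-free resolution $\kappa:P/P_0\to G:=F/F_0$, namely $\bar p\mapsto\sum_{v_i(P_0)=0}b_iv_i(p)\bar v_i$. Let $K$ be the face of $G$ generated by those $\bar v_i$ with $i\in J$ and $v_i(P_0)=0$. Then $G/K$ is free on $\{\bar v_i:i\notin J\}$, and the assignment $\bar v_i\mapsto\bar e_i$ for $i\notin J$ gives an isomorphism $G/K\to F/H\cong F'/H'$ over $P/P_0$, since $\eta(\bar p)=\sum_{i\notin J}b_iv_i(p)\bar e_i$ while the composite $P/P_0\xrightarrow{\kappa}G\to G/K$ sends $\bar p$ to $\sum_{i\notin J}b_iv_i(p)\bar v_i$. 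Finally, $\kappa(\bar p)\in K$ iff $v_i(p)=0$ for all $i\notin J$, i.e.\ iff $\bar p=0$, so $K$ meets the image of $\kappa$ trivially. Hence $\eta$ is a $(b_i)$-sliced resolution, which is what we needed. I do not anticipate a genuine obstacle: the proof is pure bookkeeping, and the only point that requires care is the layered reindexing of the $b_i$ — from the irreducible elements of $F$, through the torus-invariant divisors of $\Spec k[P]$ via Proposition \ref{prop:components}, to the irreducible elements of the minimal free resolution of $P/P_0$ — together with keeping the three index sets $J$, $\{i:v_i(P_0)=0\}$, and their intersection straight, which is exactly what pins down the face $K$ to slice by.
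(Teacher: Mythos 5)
Your proposal is correct and is exactly the argument the paper intends: the paper gives no separate proof of Proposition \ref{prop:adm2.12}, stating only that the proof of Proposition \ref{prop:2.12} carries over after replacing minimal free resolutions by $(b_i)$-free resolutions, and your write-up carries out that adaptation faithfully (in particular the key observations that $P_0$ is unchanged by the scaling, that $F_0\subset H$ forces $i\notin J\Rightarrow v_i(P_0)=0$, and the identification of $I(f(\bar t))$ with the irreducible elements of $F(P/P_0)$ via Proposition \ref{prop:components}). The only cosmetic remark is that the morphism in the statement should go $\Spec k[F']\to X$, as you correctly took it to.
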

\begin{proposition}
\label{prop:admcomp}
Let $P$ be a  toric sharp monoid, $i:P\to F$ be a minimal free resolution, and $b_i$ be positive integers indexed by the irreducible elements of $F$.  If $i':P\to F'$ is a $(b_i)$-free resolution and $X=\Spec k[P]$, then $\mf{X}_{(b_i)}$ is isomorphic to  $[\Spec k[F']/D(F'^{gp}/i'(P^{gp}))]$ 
over $X$.
\end{proposition}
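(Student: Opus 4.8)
The plan is to follow the proof of Proposition \ref{prop:comp} essentially line for line, replacing each ingredient used there by its $(b_i)$-counterpart. Write $G:=D({F'}^{gp}/i'(P^{gp}))$, $\mathcal{Y}:=[\Spec k[F']/G]$, and let $h:\mathcal{Y}\to X$ and $\pi:\Spec k[F']\to\mathcal{Y}$ be the canonical morphisms. By \cite[Prop 5.20]{log}, the fiber of $\mathcal{Y}$ over $f:T\to X$ is the groupoid of triples $(\NN,\eta,\gamma)$ in which $\NN$ is a fine log structure on $T$, $\gamma:F'\to\bbar{\NN}$ \'etale locally lifts to a chart, and $\eta:f^*\MM_P\to\NN$ is a morphism of log structures rendering the evident square with $i'$, $\gamma$ and $\bar\eta$ commutative. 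The first step is to observe that the $\eta$ occurring in such a triple is automatically a $(b_i)$-sliced resolution: as in Proposition \ref{prop:comp}, one lifts a geometric point $\bar t$ of $T$ along the surjection $\pi$ to $\Spec k[F']$, identifies $\bar\eta_{\bar t}$ with the stalk there of the tautological map $\bbar{\MM}_P\to\bbar{\MM}_{\mathcal{Y}}$, and applies Proposition \ref{prop:adm2.12} in place of Proposition \ref{prop:2.12}. Forgetting $\gamma$ then defines a morphism $\Phi:\mathcal{Y}\to\mf{X}_{(b_i)}$ of stacks over $X$, and it remains to see that $\Phi$ is an equivalence.

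For full faithfulness I would argue exactly as in Proposition \ref{prop:comp}. Given two triples $(\NN_j,\eta_j,\gamma_j)$, $j=1,2$, and an isomorphism of log structures $\xi:\NN_1\to\NN_2$ with $\xi\eta_1=\eta_2$, one must check $\bar\xi\gamma_1=\gamma_2$, which can be checked on the stalk at a point $t\in T$, where $\bbar{\NN}_{1,\bar t}$ and $\bbar{\NN}_{2,\bar t}$ are free of a common rank $r$. As before, $\gamma_{j,\bar t}$ is recorded by a subset $S_j\subseteq\{1,\dots,d\}$ together with a bijection $\sigma_j:S_j\to\{1,\dots,r\}$ (the irreducibles of $F'$ indexed by $S_j$ going to the irreducibles of $\bbar{\NN}_{j,\bar t}$, the rest to $0$), and $\bar\xi_{\bar t}$ by a permutation $\tau$ of $\{1,\dots,r\}$. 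Since $i'$ is a $(b_i)$-free resolution, after identifying $F'$ with $F$ via the isomorphism of Definition \ref{def:admfrqfr} we may write $i'(p)=\sum_i b_i v_i(p)e_i$, with $v_1,\dots,v_d$ the distinct first lattice points on the extremal rays of $\CV$. The (not yet known) equality $(\bar\xi\gamma_1)_{\bar t}=\gamma_{2,\bar t}$ is known after precomposition with $i'$, and comparing coordinates yields $b_{(\tau\sigma_1)^{-1}(j)}\,v_{(\tau\sigma_1)^{-1}(j)}(p)=b_{\sigma_2^{-1}(j)}\,v_{\sigma_2^{-1}(j)}(p)$ for all $p\in P$ and all $j$. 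Because the $b_i$ are \emph{positive} and the $v_i$ lie on distinct rays, this forces $(\tau\sigma_1)^{-1}(j)=\sigma_2^{-1}(j)$ for every $j$, hence $S_1=S_2$ and $\tau\sigma_1=\sigma_2$, i.e.\ $\bar\xi\gamma_1=\gamma_2$. Positivity of the $b_i$ is precisely what keeps the coordinate comparison of Proposition \ref{prop:comp} intact.

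For essential surjectivity, by full faithfulness it suffices to show that a $(b_i)$-sliced resolution $f:(T,\NN)\to(X,\MM_P)$ is fppf locally in the image of $\Phi$. At a geometric point $\bar t$ one has $\bbar{\MM}_{P,f(\bar t)}=P/P_0$ for a face $P_0$ of $P$, and $\bar f_{\bar t}$ is a $(b_i)_{i\in I}$-sliced resolution. Here one needs the $(b_i)$-analogue of Proposition \ref{prop:face}: the morphism $P/P_0\to F'/F'_0$ induced by $i'$, with $F'_0$ the face of $F'$ generated by $P_0$, is a $(b_i)_{i\in I}$-free resolution of $P/P_0$. This is immediate from Proposition \ref{prop:face} together with the observation that scaling a minimal free resolution by the positive integers $b_i$ commutes with the passage to the face quotient, the irreducibles surviving the quotient being exactly those indexed by $i$ with $v_i(P_0)=0$. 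Consequently $\bar f_{\bar t}$ factors as $P/P_0\to F'/F'_0\to (F'/F'_0)/F_1$ with $F_1\cap(P/P_0)=0$; lifting $F_1$ to the face $H\subseteq F'$ with $H/F'_0=F_1$, the composite $P\to\bbar{\NN}_{\bar t}$ factors through $P\xrightarrow{i'}F'\to F'/H$, a quotient by a face, which \'etale locally lifts to a chart. Since $i'$ is an injective morphism to a free monoid, Proposition \ref{prop:2.17} then produces, in an fppf neighborhood of $\bar t$, a chart $\epsilon:F'\to\NN$ with $\epsilon i'$ equal to the log map of $f$ and $\bar\epsilon_{\bar t}$ the map just constructed; the triple $(\NN,\eta,\bar\epsilon)$ with $\eta$ the log map of $f$ is then an object of $\mathcal{Y}$ carried to $f$ by $\Phi$, so $\Phi$ is an equivalence.

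The only genuinely new work is the bookkeeping in the full-faithfulness step — tracking the positive integers $b_i$ through the coordinate comparison — together with recording the $(b_i)$-version of Proposition \ref{prop:face}; I expect these to be the main, though routine, points, everything else being a transcription of the proof of Proposition \ref{prop:comp}.
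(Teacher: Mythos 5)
Your proposal is correct and follows exactly the route the paper intends: the paper itself only states that the proof of Proposition \ref{prop:comp} carries over after replacing minimal free resolutions by $(b_i)$-free resolutions and sliced resolutions by $(b_i)$-sliced resolutions, and your write-up supplies precisely those details (the use of Proposition \ref{prop:adm2.12} in place of Proposition \ref{prop:2.12}, the $b_i$-weighted coordinate comparison for full faithfulness, and the $(b_i)$-analogue of Proposition \ref{prop:face} for essential surjectivity). Nothing further is needed.
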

We now prove an analogue of Theorem \ref{thm:main}.  Note that if some $b_i>1$, then a $(b_i)$-sliced resolution $P\rightarrow F'$ may have automorphisms, even if $P$ is free.  As a result, the morphism $\mf{X}_{(b_i)}\ra X$ is not an isomorphism over $X^{sm}$;  however, it is an isomorphism over $X^{triv}$:
\begin{theorem}
\label{thm:admmain}
Let $b_i$ be a positive integer for every irreducible component $D_i$ of $X\setminus X^{triv}$.  Then $\mf{X}_{(b_i)}$ is a smooth log smooth Artin stack over $S$.  The natural map $\mf{X}_{(b_i)}\to X$ is a good moduli space morphism and the base change of this map to $X^{triv}$ is an isomorphism.
\end{theorem}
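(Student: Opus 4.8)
The plan is to re-run the proof of Theorem~\ref{thm:main}, systematically replacing minimal free resolutions and sliced resolutions by $(b_i)$-free resolutions and $(b_i)$-sliced resolutions, and substituting Propositions~\ref{prop:adm2.12} and~\ref{prop:admcomp} for Propositions~\ref{prop:2.12} and~\ref{prop:comp}. To see that $\mf{X}_{(b_i)}$ is a smooth log smooth Artin stack over $S$, I would argue strict \'etale locally on $X$. By Theorem~4.8 and Lemma~7.1 of~\cite{logdef} there is a strict \'etale cover $h\colon Y\to X$ admitting a smooth strict morphism $g\colon Y\to Z=\Spec k[P]$ with $P$ a toric sharp monoid; the bijection of Proposition~\ref{prop:components} transports the $b_i$ (indexed by the irreducible components of $X\setminus X^{triv}$) to the irreducible elements of the minimal free resolution $i\colon P\to F$, and we fix a $(b_i)$-free resolution $i'\colon P\to F'$. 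Since formation of $\mf{X}_{(b_i)}$ commutes with strict base change, Proposition~\ref{prop:admcomp} gives an isomorphism $\mf{X}_{(b_i)}\times_X Y\cong[\Spec k[F']/G']\times_Z Y$ over $Y$, where $G'=D(F'^{gp}/i'(P^{gp}))$. As $G'$ is a diagonalizable---hence flat, affine, of finite type---group over $k$ acting on the smooth affine scheme $\Spec k[F']$, the quotient $[\Spec k[F']/G']$ is a smooth Artin stack over $S$, log smooth for the log structure descended from $\MM_{F'}$; hence so is $\mf{X}_{(b_i)}$.

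For the good moduli space assertion, by~\cite[Prop 4.7 (i),(ii)]{alper} it is enough to show that $[\Spec k[F']/G']\to Z$ is a good moduli space morphism, and by~\cite[Ex 8.3]{alper} this follows once we check $k[F']^{G'}=k[P]$. The group $G'$ acts on $k[F']$ through the character carrying a monomial $f\in F'$ to its class in $F'^{gp}/i'(P^{gp})$, so $k[F']^{G'}=k\bigl[\,F'\cap i'(P^{gp})\,\bigr]$, the intersection taken in $F'^{gp}$. Using Definition~\ref{def:admfrqfr} to identify $F'$ with $F$ so that $i'$ becomes the composite of $i$ with the endomorphism of $F$ scaling each irreducible element by its positive integer $b$, any element of $F'\cap i'(P^{gp})$, viewed in $F^{gp}$, arises from some $i^{gp}(p)$ with $p\in P^{gp}$ by scaling coordinates by positive integers, and it lies in $F$; hence $i^{gp}(p)\in F$ as well, so $p\in P$ by exactness of $i$ (Proposition~\ref{prop:can}). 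Thus $F'\cap i'(P^{gp})=i'(P)\cong P$ and $k[F']^{G'}=k[P]$, compatibly with the structure morphism to $X$; running the argument of Theorem~\ref{thm:main} now yields the good moduli space claim.

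It remains to treat $X^{triv}$. There the log structure $\MM_X$ is trivial, so $\bbar{\MM}_{X,\bar x}=0$ at every geometric point lying over $X^{triv}$; the trivial monoid has $F(0)=0$, hence its only $(b_i)$-free resolution, and therefore its only $(b_i)$-sliced resolution, is $0\to 0$. Consequently every object $(\NN,f)$ of $\mf{X}_{(b_i)}(T)$ with $T$ an $X^{triv}$-scheme has $\NN$ trivial and $f$ strict, so it is unique up to unique isomorphism, and the base change of $\mf{X}_{(b_i)}\to X$ to $X^{triv}$ is an isomorphism. I expect the only step that is not a mechanical transcription of the proof of Theorem~\ref{thm:main} to be the invariant computation $k[F']^{G'}=k[P]$: one must see that enlarging the group from $D(F^{gp}/P^{gp})$ to $D(F'^{gp}/i'(P^{gp}))$, as one passes from the minimal free resolution $F$ to a $(b_i)$-free resolution $F'$, leaves the ring of invariants unchanged, and this is precisely where exactness of the minimal free resolution enters.
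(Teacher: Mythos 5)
Your proposal is correct and follows essentially the same route as the paper, which simply declares the smoothness and good-moduli-space parts to be "the same as in Theorem \ref{thm:main}" (using Propositions \ref{prop:adm2.12} and \ref{prop:admcomp} in place of Propositions \ref{prop:2.12} and \ref{prop:comp}) and handles $X^{triv}$ exactly as you do, via $\bbar{\MM}_{X,\bar{x}}=0$. Your explicit verification that $k[F']^{G'}=k[P]$ --- equivalently, that the $(b_i)$-free resolution $i'$ is still exact because it is the composite of the exact maps $i$ and $\cdot(b_i)$ --- is the one detail the paper leaves implicit, and you identify it correctly as the only non-mechanical step.
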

\begin{proof}
The proof that $\mf{X}_{(b_i)}$ is a smooth log smooth Artin stack with good moduli space $X$ is the same as in Theorem \ref{thm:main}.  Lastly, if $x\in X^{triv}$, then $\bbar{\MM}_{X,\bar{x}}=0$, so every $(b_i)$-sliced resolution of $\bbar{\MM}_{X,\bar{x}}$ is an isomorphism.  Hence, every $(b_i)$-sliced resolution $(T,\NN)\to (X^{triv},\MM_X|_{X^{triv}})$ has no automorphisms.
\end{proof}
The $\mf{X}_{(b_i)}$ are all root stacks of the stack $\mf{X}=\mf{X}_{(1)}$ in Theorem \ref{thm:main}.  As we will see in Section \ref{sec:toric}, if we restrict $X$ to being a toric variety, rather than an arbitrary log smooth log scheme, then we can construct many other moduli-theoretic smooth log smooth stacks having $X$ as a good moduli space.

\section{The Chevalley-Shephard-Todd Theorem\\for Diagonalizable Group Schemes}
\label{sec:cst}
Throughout this section, $k$ is a field and $A$ is a finitely-generated abelian group.  We let $G=D(A)$ be the 
diagonalizable group scheme over $k$ associated to $A$ and we fix a faithful action of $G$ on a finite-dimensional 
$k$-vector space $V$.  Our goal in this section is to give necessary and sufficient conditions for when the 
invariants $\kVG$ is a polynomial algebra over $k$.  In the process of working toward this goal, we give necessary and 
sufficient conditions for when $\Spec(\kVG)$ is a simplicial toric variety, see Theorem \ref{thm:msopsimp} (also obtained by Wehlau in \cite[Thm 4.1]{wehlau}).

\begin{definition}
\label{def:stable}
\emph{
An action of a torus $T$ on a finite-dimensional $k$-vector space $W$ is called \emph{stable} if there is a dense open subset of $\Spec k[W]$ which is a disjoint union of closed $T$-orbits.  An action of a diagonalizable group scheme $G$ on $W$ is called \emph{stable} if the induced action of its maximal torus is stable.
}
\end{definition}
Choosing $V'\subset V$ as in \cite[Lemma 2]{wehlau2}, we see that $V'$ is invariant under $G$ and stable.  Furthermore, we have $\kVG=k[V']^G$.  Replacing $G$ by the quotient of the kernel of the representation on $V'$, we can assume that the action on $V'$ is faithful and stable.  So, for the purposes of studying $k[V]^G$, we may assume the $G$-action on $V$ is faithful and stable. We assume this is the case throughout the rest of this section.

\begin{definition}
\label{def:MSOP}
\emph{Let $M$ be a finitely-generated abelian group and $T=D(M)$ the associated torus over $k$. A faithful $T$-action on a finite-dimensional $k$-vector space $W$ with weights $m_1,\dots,m_d\in M$ is called \emph{orderly} if there are weights $m_{i_1},\dots,m_{i_r}$ which are a basis for $M\otimes_{\Z}\Q$ and such that all other weights are non-positive rational linear combinations of the $m_{i_j}$.  We say that $m_{i_1},\dots,m_{i_r}$ is an \emph{orderly basis}.  An action of a diagonalizable group scheme $G$ on $W$ is called \emph{orderly} if the induced action of its maximal torus is orderly.}
\end{definition}

We can now state the first main result of this section.

\begin{theorem}
\label{thm:msopsimp}
The $G$-action on $V$ is orderly if and only if $\Spec(\kVG)$ is a simplicial toric variety .
\end{theorem}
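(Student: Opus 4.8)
The plan is to translate the statement entirely into the combinatorics of monoids and cones, where ``orderly'' and ``simplicial'' both have clean descriptions. Since $G = D(A)$ acts faithfully on $V = \mathrm{Spec}\, k[x_1,\dots,x_d]$ with weights $a_1,\dots,a_d \in A$, the invariant ring $\kVG$ is the monoid algebra $k[P]$ where $P = \{(n_1,\dots,n_d) \in \N^d : \sum n_i a_i = 0\}$ is the kernel monoid of the map $\N^d \to A$. The group $P^{gp}$ is the kernel of $\Z^d \to A$, and $P = P^{gp} \cap \N^d$, so $P$ is a toric monoid; moreover $\mathrm{Spec}\,(\kVG) = \mathrm{Spec}\, k[P]$ is a toric variety (the dense torus being $D(P^{gp})$). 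Thus the right-hand side of the theorem says exactly that $P$ is a simplicially toric monoid in the sense of Section \ref{sec:mfr}, i.e.\ that $C(P)^{\vee}$ has exactly $\mathrm{rank}(P^{gp})$ extremal rays.

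The heart of the argument is to identify the cone $C(P)$ and its dual inside the relevant vector spaces. Writing $W = P^{gp} \otimes \Q = \ker(\Q^d \to A\otimes\Q)$, the cone $C(P) \subseteq W$ is cut out by the inequalities $x_i \geq 0$; i.e.\ $C(P) = \Q_{\geq 0}^d \cap W$. Dually, $C(P)^{\vee} \subseteq W^{\vee} = \Q^d / (A\otimes\Q)^{\perp}$ — concretely, $W^{\vee}$ is the quotient of $\Q^d$ identifying $e_i$ with the image of the $i$-th weight under a suitable identification, so $C(P)^{\vee}$ is generated by the images $\bar e_1,\dots,\bar e_d$ of the standard generators. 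This is the key dictionary: \emph{the extremal rays of $C(P)^{\vee}$ are among the rays spanned by the $\bar e_i$}, i.e.\ among the classes of the weights $a_1,\dots,a_d$ in $A\otimes\Q$ (after identifying $W^\vee$ with $A\otimes\Q$ via the surjection $\Q^d \to A\otimes\Q$, which is legitimate once one checks faithfulness makes this an isomorphism onto its natural target). Under this identification, an orderly basis $a_{i_1},\dots,a_{i_r}$ is precisely a choice of $r = \mathrm{rank}\, P^{gp} = \dim W^\vee$ of the weights that (a) span and (b) have the property that every other weight lies in $-\mathrm{Cone}(a_{i_1},\dots,a_{i_r})$, which says $\mathrm{Cone}(a_1,\dots,a_d) = \mathrm{Cone}(a_{i_1},\dots,a_{i_r})$ is a simplicial cone with exactly $r$ extremal rays.

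With this dictionary, both directions are nearly formal. If the action is orderly with orderly basis $a_{i_1},\dots,a_{i_r}$, then $C(P)^{\vee} = \mathrm{Cone}(a_{i_1},\dots,a_{i_r})$ is generated by $r = \mathrm{rank}\, P^{gp}$ rays, so it has at most — hence, being full-dimensional, exactly — $r$ extremal rays, and $P$ is simplicially toric. Conversely, if $\mathrm{Spec}\,\kVG$ is a simplicial toric variety, then $C(P)^{\vee}$ has exactly $r$ extremal rays; each extremal ray, being extremal in $\mathrm{Cone}(\bar e_1,\dots,\bar e_d)$, must be spanned by some $\bar e_{i_j}$ (an extremal ray of a cone generated by finitely many rays is one of those rays), giving $r$ weights $a_{i_1},\dots,a_{i_r}$; these span $A\otimes\Q$ since the cone is full-dimensional, hence form a $\Q$-basis, and every other $\bar e_i$ lies in $C(P)^\vee = \mathrm{Cone}(\bar e_{i_1},\dots,\bar e_{i_r})$, i.e.\ each remaining weight is a \emph{non-negative} combination of the basis weights — and here is the one subtlety — one must match the sign convention in Definition \ref{def:MSOP}, which is phrased with \emph{non-positive} combinations. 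This is harmless: it reflects only the choice of whether one identifies $W^\vee$ with $A\otimes\Q$ via the weights or via their negatives (equivalently, the ``invariants vs.\ coinvariants'' normalization), and one resolves it by fixing the identification so that $C(P)^\vee$ maps to $-\mathrm{Cone}(a_i)$ rather than $\mathrm{Cone}(a_i)$.

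The main obstacle, and the only place real care is needed, is pinning down this duality dictionary precisely: verifying that faithfulness of the $G$-action makes the relevant map $W^\vee \to A\otimes\Q$ an isomorphism, that $C(P) = \Q_{\geq0}^d \cap W$ really dualizes to the cone generated by the weight-classes (this is the standard fact that the dual of a slice of the positive orthant by a subspace is the projection of the orthant — or cite \cite[p.9]{fulton} as in Proposition \ref{prop:can}), and that the sign convention is installed consistently. Once that is set up correctly, the equivalence falls out as above with no further computation.
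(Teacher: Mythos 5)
Your starting point --- realizing $\Spec\kVG$ as $\Spec k[P]$ with $P=\ker(\N^{d}\to A)$ --- is fine (note that the identification $P^{gp}\otimes\Q=\ker(\Q^{d}\to A\otimes\Q)$ already uses the standing stability hypothesis, via Lemma \ref{l:stable}; without stability $P^{gp}$ can be strictly smaller than the kernel). The problem is that the ``key dictionary'' on which the whole argument rests is incorrect. Writing $W=P^{gp}\otimes\Q\subset\Q^{d}$, the cone $\CV$ is generated by the images of the dual basis vectors $e_i^{*}$ in $W^{*}=(\Q^{d})^{*}/W^{\perp}$, i.e.\ by the functionals $w_i=(\text{$i$-th coordinate})|_{P}$. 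These are \emph{not} the weights: the weights $m_i$ are the images of the $e_i$ in $\Q^{d}/W\cong M\otimes\Q$. The two quotients have dimensions $\rank P^{gp}=d-\rank M$ and $\rank M$ respectively, so no identification --- with or without your proposed sign flip --- matches them up; the configurations $\{w_i\}$ and $\{m_i\}$ are Gale duals of one another, not equal. Your reformulation of ``orderly'' fails for the same reason: if every non-basis weight is a \emph{non-positive} combination of the $m_{i_j}$, then $\Cone(m_1,\dots,m_d)$ contains both $\Cone(m_{i_j})$ and its negative and is typically not even strongly convex. Concretely, for $\mathbb{G}_m$ acting on $k^{3}$ with weights $1,1,-2$ (the torus part of Example \ref{ex:cst2}), the action is orderly and $\Spec\kVG\cong\Spec k[x,y,z]/(xz-y^{2})$ is simplicial, yet $\Cone(1,1,-2)=\Q$ has no extremal rays at all, whereas $\CV$ is the two-dimensional cone generated by $(2,0)$, $(0,2)$, $(1,1)$.

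What is missing is exactly the translation between the two Gale-dual configurations, which is the actual content of the theorem and is not a formal consequence of dualizing $\Q_{\geq0}^{d}\cap W$. One must show that $\{m_i\}_{i\in S}$ is an orderly basis if and only if there exist $p_j\in P$ for $j\notin S$ forming a $\Q$-basis of $P^{gp}\otimes\Q$ with $w_j(p_j)>0$ and $w_\ell(p_j)=0$ for $j\neq\ell$ with $j,\ell\notin S$ (Lemma \ref{l:msopbasis}), and then argue separately that such a system forces the $w_j$, $j\notin S$, to lie on distinct extremal rays of $\CV$ and to exhaust them (and, conversely, that simpliciality produces such a system via closeness of the minimal free resolution). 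As written, your argument in effect asserts that $\CV$ coincides, up to sign, with the cone generated by the weights, which is false, so both directions of the equivalence are unproved.
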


To prove Theorem \ref{thm:msopsimp}, we need the following two technical lemmas.

\begin{lemma}
\label{l:stable}
There is a morphism $\pi:F'\ra A$ from a free monoid such that $k[V]=k[F']$ and the $G$-action on $k[V]$ agrees with action induced by $\pi$.  Furthermore, there is an injective exact morphism $i':P\ra F'$ from a toric sharp monoid $P$ such that $A=F'^{gp}/i'(P^{gp})$ and the induced morphism $k[P]\ra k[F']^G$ is an isomorphism.
\end{lemma}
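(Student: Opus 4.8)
The plan is to translate the representation-theoretic data into combinatorics of monoids and then extract the two claimed morphisms. Write the $G$-action on $V$ via weights: choosing coordinates $x_1,\dots,x_d$ on $V$ that are eigenvectors, we obtain weights $a_1,\dots,a_d\in A$, and $G$ acts on the polynomial ring $k[V]=k[x_1,\dots,x_d]=k[\N^d]$ through the map $\pi\colon F'\coloneqq \N^d\to A$ sending the $j$-th generator to $a_j$. By faithfulness of the $G$-action, the $a_j$ generate $A$, so $\pi^{gp}\colon \Z^d\to A$ is surjective. This gives the first sentence; set $P^{gp}\coloneqq\ker(\pi^{gp})$, a free abelian group of rank $r=d-\rank A$, so that $A=F'^{gp}/P^{gp}$ by construction.

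Next I would define $P\coloneqq P^{gp}\cap F'=P^{gp}\cap\N^d$ and let $i'\colon P\to F'$ be the inclusion; this is automatically exact (it is the cartesian pullback of $P^{gp}\hookrightarrow F'^{gp}$ along $F'\hookrightarrow F'^{gp}$) and injective, and $P$ is fine and saturated with torsion-free associated group, hence toric; I must still check $P$ is sharp, i.e. that $P^{gp}\cap\N^d$ contains no line, which holds because $\N^d$ is sharp. The remaining content is that the inclusion $k[P]\hookrightarrow k[F']^G=k[V]^G$ is an isomorphism — equivalently, that the $G$-invariant monomials are exactly the exponent vectors lying in $P^{gp}$. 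One inclusion is immediate: a monomial $x^u$ with $u\in\N^d$ is $G$-invariant iff $\pi^{gp}(u)=0$ iff $u\in P^{gp}\cap\N^d=P$. So $k[V]^G$ is spanned by $\{x^u : u\in P\}$ as a $k$-vector space, which is precisely $k[P]$; this is in fact a tautology once one observes $k[V]$ is multigraded by $A$ via $\pi$ and taking invariants picks out the degree-$0$ piece. Thus the isomorphism $k[P]\to k[F']^G$ requires essentially no work beyond unwinding definitions.

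The one genuine point where the hypothesis that the $G$-action is stable (and faithful) is needed is to guarantee that $P$ has the right \emph{rank} — namely that $\Spec k[P]$, which is $\Spec k[V]^G$, has dimension equal to $\rank P^{gp}=d-\rank A$, and more precisely that the quotient is ``nondegenerate'' in the sense that $P$ is genuinely toric of that rank rather than collapsing. Concretely, stability (via the choice of $V'$ made just before Definition \ref{def:MSOP}, which we have already absorbed into our standing assumptions) ensures that the generic $G$-orbit on $V$ is closed of dimension $\rank A$, so that $\dim k[V]^G=d-\rank A$; combined with the fact that $k[V]^G=k[P]$ is a domain finitely generated over $k$, this pins down $\rank P^{gp}=d-\rank A$ and shows $P$ is a toric sharp monoid of the expected rank. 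I expect this dimension/stability bookkeeping to be the only real obstacle; the rest is formal manipulation of monoid groups and multigradings. I would present the argument in the order: (1) set up weights and $\pi$, deduce surjectivity of $\pi^{gp}$; (2) define $P^{gp}=\ker\pi^{gp}$, $P=P^{gp}\cap F'$, and $i'$, and check exactness, injectivity, and that $P$ is toric sharp, using stability for the rank statement; (3) identify $k[V]^G$ with the degree-$0$ part of the $A$-grading and conclude $k[P]\xrightarrow{\sim}k[F']^G$.
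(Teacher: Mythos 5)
Your construction is the same as the paper's: decompose $V$ into one--dimensional eigenspaces, let $F'$ be the free monoid on the chosen eigenbasis, let $\pi:F'\to A$ record the weights, and take $P$ to be the monoid of $G$-invariant monomials, i.e.\ $P=\ker(\pi^{gp})\cap F'$. The routine parts are handled correctly and exactly as in the paper: surjectivity of $\pi^{gp}$ from faithfulness, exactness of $i'$ from the grading, sharpness of $P$ from sharpness of $\N^d$, and $k[P]=k[F']^G$ because taking invariants extracts the degree-zero piece of the $A$-grading.

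The one genuine gap is at the point you yourself flag. The assertion $A=F'^{gp}/i'(P^{gp})$ requires that the group \emph{generated by the monoid} $P$ equal $\ker(\pi^{gp})$; your opening move ``set $P^{gp}:=\ker(\pi^{gp})$'' silently identifies the two, and your later dimension count only shows that the subgroup generated by $P$ has the same \emph{rank} as $\ker(\pi^{gp})$, namely $d-\rank A$. Equality of ranks does not in general give equality of sublattices: a finite-index sublattice would yield an extension of $A$ by a finite group rather than $A$ itself, and then the quotient stack appearing later would acquire a spurious finite generic stabilizer. The gap is fillable: writing $L=\ker(\pi^{gp})$, the monoid $P=L\cap\N^d$ consists of \emph{all} lattice points of $L$ lying in the cone $L_{\Q}\cap\Q_{\geq0}^d$; once the rank count shows this cone is full-dimensional in $L_{\Q}$, any $x\in L$ lands in $P$ after adding a sufficiently large multiple of an interior lattice point of the cone, so $P-P=L$. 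You should add this step explicitly (or argue as the paper does: stability forces the $T$-orbits on the dense torus $T'=\Spec k[F'^{gp}]$ to be closed and free, so $T'/G=\Spec k[F'^{gp}]^G$ is the dense torus $\Spec k[P^{gp}]$ of $\Spec k[P]$, which directly identifies $P^{gp}$ with $\ker(\pi^{gp})$). With that addition your argument is complete and follows the same route as the paper's.
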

\begin{proof}
Since $G$ is diagonalizable, we can write $V=\bigoplus V_i$, where the $V_i$ are one-dimensional subrepresentations.  Let $e'_i$ be a basis of $V_i$, and let $F'$ be the free monoid on the $e'_i$.  The $G$-action on $V$ yields an $A$-grading, and hence a map $\pi:F'\ra A$.  It is clear that $k[V]=k[F']$ and that the $G$-action on $k[V]$ agrees with the one induced by $\pi$.  The map $\pi^{gp}:F'^{gp}\ra A$ is surjective, as the $G$-action on $V$ is faithful.

Since the $G$-action on $k[F']$ is linear, it is a toric action.  Let $P\subset F'$ be the submonoid of monomials such that $k[P]=k[F']^G$, and let $i':P\ra F'$ denote the inclusion.  Note that the torus $T':=\Spec k[F'^{gp}]$ of $\Spec k[F']$ is a disjoint union of closed $T$-orbits, where $T=D(A/A_{tors})$ is the maximal torus of $G$.  Indeed, if $T'$ contains a $T$-orbit $Z$ which is not closed, then translating $Z$ by $T'$ covers $T'$ by $T$-orbits which are not closed, violating the assumption that the $G$-action on $V$ is stable.  Similarly, we see that the $G$-action on $T'$ is free.  It follows that the torus $\Spec k[P^{gp}]$ of $\Spec k[P]$ is equal to $\Spec k[F'^{gp}]^G$, and so $A=F'^{gp}/i'(P^{gp})$.  Lastly, to see $i'$ is exact, note that if $f'\in F'$ has the same image in $F'^{gp}$ as an element of $P^{gp}$, then $\pi(f')=0$, and so $f'\in P$.
\end{proof}

Throughout the rest of this section, we fix the following notation.  Let $i:P\ra F$ be a minimal free resolution and let $e_1,\dots,e_d$ be the irreducible elements of $F$.  Then
\[
i(p)=\sum_i v_i(p)e_i,
\]
where $v_1,\dots,v_d$ are the first lattice points on the extremal rays of $\CV$.  Let $e'_1,\dots,e'_{d'}$ be the irreducible elements of $F'$ and let $w_1,\dots,w_{d'}\in\Hom(P,\N)$ such that 
\[
i'(p)=\sum_jw_j(p)e'_j
\]
for all $p\in P$.  As in the proof of Proposition \ref{prop:can}, exactness of $i'$ shows that
\[
\Cone(w_1,\dots,w_{d'})=\CV=\Cone(v_1,\dots,v_d).
\]
Let $M=A/A_{tors}$, so that $D(M)$ is the maximal torus of $G$, and let $m_i$ be the image of $\pi(e'_i)$ in $M$.

\begin{lemma}
\label{l:msopbasis}
Let $S\subset\{1,\dots,d'\}$.  Then $\{m_i\}_{i\in S}$ is an orderly basis if and only if there exists a basis $\{p_j\}_{j\notin S}$ of $P^{gp}\otimes\Q$ with each $p_j\in P$ satisfying $w_j(p_j)>0$, and $w_\ell(p_j)=0$ if $j,\ell\notin S$ and $j\neq \ell$.
\end{lemma}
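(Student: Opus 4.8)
The plan is to translate both conditions through the short exact sequence produced in Lemma \ref{l:stable}. Since $i'$ is injective and $A = F'^{gp}/i'(P^{gp})$, the sequence $0\to P^{gp}\xrightarrow{i'^{gp}} F'^{gp}\xrightarrow{\pi^{gp}} A\to 0$ is exact; tensoring with $\Q$ and using $A\otimes\Q = M\otimes\Q$ gives an exact sequence $0\to P^{gp}\otimes\Q\to F'^{gp}\otimes\Q\xrightarrow{\rho} M\otimes\Q\to 0$ in which $\rho(e'_k)=m_k$. In particular $\dim_\Q(M\otimes\Q)=d'-\rank P^{gp}$, so each of the two conditions forces $|S|=d'-\rank P^{gp}$, and both sets of distinguished elements are indexed by $\{1,\dots,d'\}\setminus S$. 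Throughout I would identify $P^{gp}$ and $P^{gp}\otimes\Q$ with their images in $F'^{gp}$ and $F'^{gp}\otimes\Q$ under $i'^{gp}$, so that (by the defining formula $i'(p)=\sum_j w_j(p)e'_j$) the $e'_k$-coordinate of any $q\in P^{gp}\otimes\Q$ is precisely $w_k(q)$.

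For the direction ``orderly basis $\Rightarrow$ the $p_j$ exist'', I would start from relations $m_j=\sum_{i\in S}c_{ji}m_i$ with all $c_{ji}\le 0$ (for $j\notin S$), and form $q_j:=e'_j-\sum_{i\in S}c_{ji}e'_i\in F'^{gp}\otimes\Q$. Applying $\rho$ shows $q_j\in\ker\rho=P^{gp}\otimes\Q$, and inspecting the $e'_\ell$-coordinates for $\ell\notin S$ shows the $q_j$ are linearly independent, hence a basis of $P^{gp}\otimes\Q$. Reading coordinates as above gives $w_j(q_j)=1$, $w_\ell(q_j)=0$ for $\ell\notin S$ with $\ell\ne j$, and $w_i(q_j)=-c_{ji}\ge0$ for $i\in S$; thus $w_k(q_j)\ge0$ for all $k$, and since $\CV=\Cone(w_1,\dots,w_{d'})$ this means $q_j\in(\CV)^{\vee}=C(P)$. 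Choosing $n_j\in\Z_{>0}$ with $n_jq_j\in P^{gp}$, we get $p_j:=n_jq_j\in C(P)\cap P^{gp}=P$ by \cite[p.9 (1)]{fulton}; these $p_j$ still form a basis of $P^{gp}\otimes\Q$ and satisfy $w_j(p_j)=n_j>0$ and $w_\ell(p_j)=0$ for $\ell\notin S$, $\ell\ne j$, as required.

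For the converse, given $\{p_j\}_{j\notin S}$ as in the statement, I would apply $\rho$ to the identity $p_j=\sum_k w_k(p_j)e'_k$. As $p_j\in P^{gp}$ maps to $0$ in $A$, hence in $M\otimes\Q$, this yields $0=\sum_k w_k(p_j)m_k$. Using $w_\ell(p_j)=0$ for $\ell\notin S$ with $\ell\ne j$, $w_j(p_j)>0$, and $w_i(p_j)\ge0$ for $i\in S$ (because $p_j\in P$ and each $w_i$ lies in $\CV$), this rearranges to $m_j=\sum_{i\in S}c_{ji}m_i$ with $c_{ji}=-w_i(p_j)/w_j(p_j)\le0$. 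This gives the non-positivity requirement directly; it also shows every $m_j$ with $j\notin S$ lies in the span of $\{m_i\}_{i\in S}$, and since $\rho$ is surjective the weights $m_1,\dots,m_{d'}$ span $M\otimes\Q$, so $\{m_i\}_{i\in S}$ spans $M\otimes\Q$. As $|S|=\dim_\Q(M\otimes\Q)$, it is then a basis, hence an orderly basis.

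The one point requiring genuine care — and the part I would watch most closely — is the passage between $P^{gp}\otimes\Q$ and the monoid $P$ in the first direction: it is exactly the hypothesis $c_{ji}\le0$ that makes every $w_k$-value of $q_j$ non-negative, so that an integer multiple of $q_j$ really lands in $P$, and one must also check that rescaling by positive integers preserves both the basis property and the sign/vanishing conditions on the $w_k$. Everything else is formal manipulation of the exact sequence together with the identity $i'(p)=\sum_j w_j(p)e'_j$.
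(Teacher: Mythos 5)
Your proof is correct and follows essentially the same route as the paper's: both directions are read off from the exact sequence $0\to P^{gp}\otimes\Q\to F'^{gp}\otimes\Q\to M\otimes\Q\to 0$ of Lemma \ref{l:stable} together with the identity $i'(p)=\sum_k w_k(p)e'_k$, with the candidate $p_j$ built from the non-positive relation and shown to lie in $P$ via non-negativity of all $w_k$-values. The only (immaterial) differences are that the paper concludes $p_j\in P$ from exactness of $i'$ where you invoke $C(P)=(\CV)^\vee$ and saturatedness, and that for the basis claim you check spanning where the paper checks linear independence.
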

\begin{proof}
We first assume there exists a basis $\{p_j\}_{j\notin S}$ of $P^{gp}\otimes\Q$ as above and show $\{m_i\}_{i\in S}$ is an orderly basis.  Let $a_j=w_j(p_j)>0$ and let $a_{ij}=w_i(p_j)\geq0$ for $i\in S$.  Since the $p_j$ form a basis for $P^{gp}\otimes\Q$, we see that
\[
|S|=d'-\rank P^{gp},
\]
which is the rank of $M$, by Lemma \ref{l:stable}.  To prove $\{m_i\}_{i\in S}$ is a basis of $M\otimes\Q$, we therefore need only show that these elements are linearly independent.  If $\sum_{i\in S}c_im_i=0$ for some $c_i\in\Q$, then since 
\[
0\longrightarrow P^{gp}\otimes\Q\stackrel{{i'}^{gp}}{\longrightarrow} F'^{gp}\otimes\Q\stackrel{\pi^{gp}}{\longrightarrow} A\otimes\Q\longrightarrow 0
\]
is an exact sequence, we have
\[
\sum_{i\in S}c_ie'_i=i'(p)
\]
in $F'^{gp}\otimes\Q$ for some $p\in P^{gp}\otimes\Q$.  Writing $p=\sum_{j\notin S}c'_jp_j$ for $c'_j\in\Q$, and comparing the $e'_j$ coordinates above, we see $c'_ja_j=0$, which shows that $p=0$, and so the $c_i=0$ as well.  This shows that $\{m_i\}_{i\in S}$ is a basis of $M\otimes\Q$.

To show that the $\{m_i\}_{i\in S}$ form an orderly basis, note that for $j\notin S$,
\[
a_je'_j+\sum_{i\in S}a_{ij}e'_i=\sum_{\ell=1}^{d'}w_\ell(p_j)e'_\ell=i'(p_j).
\]
Since the $a_{ij}\geq0$, we see then that $m_j$ is a non-positive rational linear combination of the $m_i$ for $i\in S$.

We now prove the ``only if'' direction of the lemma.  If $j\notin S$, then $m_j$ is a non-positive rational linear combination of the $m_i$ for $i\in S$.  We therefore have some positive integer $a_j$, non-negative integers $a_{ij}$, and some $p_j\in P^{gp}$ such that
\[
a_je'_j=-\sum_{i\in S}a_{ij}e'_i+\sum_{\ell=1}^{d'}w_\ell(p_j)e'_\ell.
\]
We see then that $w_j(p_j)=a_j$, $w_i(p_j)=a_{ij}$ if $i\in S$, and $w_\ell(p_j)=0$ otherwise.  Since $w_{\ell}(p_j)$ is non-negative for all $\ell$, exactness of 
$i'$ shows that $p_j\in P$.

We prove that the $p_j$ form a basis for $P^{gp}\otimes\Q$.  Since the rank of $P^{gp}$ is $d'-|S|$, we need only show that the $p_j$ are linearly 
independent.  If $\sum_{j\notin S}c_jp_j=0$ for $c_j\in \Q$, then applying $w_{j_0}$ for $j_0\notin S$ shows that $c_{j_0}a_{j_0}=0$.  Since $a_{j_0}$ is 
positive, we have $c_{j_0}=0$, as desired.
\end{proof}

\begin{proof}[Proof of Theorem \ref{thm:msopsimp}]
Since $\Cone(w_1,\dots,w_{d'})=\CV=\Cone(v_1,\dots,v_d)$, after possibly reordering the $w_j$, we can assume  that $w_j$ is a positive multiple of $v_j$ for $j\leq d$.

Suppose first that $P$ is simplicial.  Then $i:P\to F$ is close, so for all $j\leq d$, there exists $p_j\in P$ and a positive integer $\lambda_j$ such that $i(p_j)=\lambda_je_j$.  Since $i(p)=\sum v_i(p)e_i$ and $P$ is simplicial, we see that the $p_j$ form a basis for $P^{gp}\otimes\Q$, that $v_j(p_j)=\lambda_j>0$, and that $v_\ell(p_j)=0$ for $j,\ell\leq d$ with $j\neq \ell$.  Since $w_j$ is a positive multiple of $v_j$ for $j\leq d$, we see that Lemma \ref{l:msopbasis} finishes the proof of the ``if'' direction.

Assume now that the $G$-action on $V$ is orderly.  Let $S$ and $\{p_j\}_{j\notin S}$ be as in Lemma \ref{l:msopbasis}, and let $a_j=w_j(p_j)$.  To prove the ``only if'' direction, it suffices to prove the stronger assertion that the extremal rays of $\CV$ are precisely the rational rays defined by the $w_j$ for $j\notin S$, with each $w_j$ lying on a distinct extremal ray. Indeed, $P$ is then simplicial, as $d=d'-|S|=\rank P^{gp}$.

We begin by showing that the $w_j$ for $j\notin S$ lie on extremal rays.  Fix $j_0\notin S$ and suppose that $w_{j_0}$ does not lie on an extremal ray.  Then we have
\[
w_{j_0}=\sum_{\ell=1}^r c'_\ell v_{i_\ell}
\]
for positive rational numbers $c'_\ell$, distinct $i_\ell$, and $r\geq2$.  If $j\notin S$ and $j\neq j_0$, then $w_{j_0}(p_j)=0$.  Since $p_j\in P$, we see the $v_{i_\ell}(p_j)\geq0$, and so $v_{i_\ell}(p_j)=0$.  Since the $p_j$ for $j\notin S$ form a basis for $P^{gp}\otimes\Q$, we see that the $v_{i_\ell}$ are determined by their values on $p_{j_0}$.  They therefore all define the same extremal ray, which is a contradiction.

To prove that the $w_j$ for $j\notin S$ lie on distinct extremal rays, assume there exist distinct $j$ and $\ell$ with $j,\ell\notin S$ and a positive rational number $c$ such that $w_\ell=cw_j$.  Then we see
\[
0=w_\ell(p_j)=cw_j(p_j)=ca_j>0
\]
which is a contradiction.

Lastly, we show that every extremal ray is in fact rationally generated by some $w_j$ for $j\notin S$.  Since $\Cone(w_1,\dots,w_{d'})=\CV$, every extremal ray of $\CV$ must be rationally generated by some $w_i$.  Since the $w_j$ for $j\notin S$ lie on distinct extremal rays, they define a basis for the $\Q$-vector space $\Hom(P^{gp},\Z)\otimes\Q$.  Hence for $i\in S$, we have $c'_{ij}\in\Q$ such that 
\[
w_i=\sum_{j\notin S}c'_{ij}w_j.
\]
Since $0\leq w_i(p_j)=c'_{ij}a_j$, we see that the $c'_{ij}\geq0$.  Therefore, if $w_i$ generates an extremal ray $\rho$, then $\rho$ is rationally generated by some $w_j$ for $j\notin S$.
\end{proof}

\begin{proposition}
\label{prop:msop}
If $\kVG$ is a polynomial algebra, then the $G$-action on $V$ is orderly.
\end{proposition}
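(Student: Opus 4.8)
The plan is to derive this immediately from Theorem~\ref{thm:msopsimp}. By Lemma~\ref{l:stable} we have a canonical identification $\kVG\cong k[P]$ for the toric sharp monoid $P$, so it suffices to show that if $k[P]$ is a polynomial algebra over $k$, then $\Spec(\kVG)$ is a simplicial toric variety, i.e.\ that $P$ is simplicially toric; the conclusion that the $G$-action on $V$ is orderly is then exactly the ``if'' direction of Theorem~\ref{thm:msopsimp}.

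To that end I would first observe that $\Spec(\kVG)\cong\mathbb{A}^n_k$ with $n=\rank P^{gp}$ is a \emph{regular} affine toric variety whose torus has character lattice $P^{gp}$ and cocharacter lattice $\Hom(P^{gp},\Z)$. Since $P$ is sharp, the cone $C(P)$ is pointed and spans $P^{gp}\otimes\Q$, so $\CV$ is pointed and full-dimensional. The standard smoothness criterion for affine toric varieties (regularity of $\Spec k[P]$ is equivalent to $\CV$ being generated by part of a $\Z$-basis of $\Hom(P^{gp},\Z)$) then forces $\CV$ to be a unimodular simplicial cone on $n=\rank P^{gp}$ rays, so that $P\cong\N^n$ is free. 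In particular $P$ is simplicially toric, i.e.\ $\Spec(\kVG)$ is a simplicial toric variety.

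Having established this, Theorem~\ref{thm:msopsimp} applies verbatim and shows that the $G$-action on $V$ is orderly, which completes the proof. The only ingredient that is not purely formal is the implication ``$k[P]$ polynomial $\Rightarrow$ $P$ free'', and I expect no real obstacle there: smooth (indeed affine-space) toric varieties have simplicial cones, so this proposition is essentially a one-line corollary of Theorem~\ref{thm:msopsimp}. It is worth recording separately precisely because the converse direction -- pinning down exactly when $\kVG$ is a polynomial algebra rather than merely the coordinate ring of a simplicial toric variety -- requires genuinely more, and is the content of the results still to come toward Theorem~\ref{thm:cst}.
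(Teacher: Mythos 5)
Your proof is correct, and it is not the route the paper takes. The paper's own proof does not invoke Theorem~\ref{thm:msopsimp} at all: it opens by asserting (without further justification) that $\kVG$ polynomial forces $P$ to be free, and then argues directly --- using $\Cone(w_1,\dots,w_{d'})=\Cone(v_1,\dots,v_d)$ to write $w_j=c_jv_j$ for $j\le d$ and $w_j=\sum_i c_{ij}v_i$ for $j>d$, and reading off from the resulting $d'\times d$ matrix of $i'^{gp}$ that $m_{d+1},\dots,m_{d'}$ is a basis of $M\otimes\Q$ with $m_j=-\sum_{i>d}(c_{ij}/c_j)m_i$ for $j\le d$. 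You instead reduce to the ``if'' direction of Theorem~\ref{thm:msopsimp} by showing that a polynomial $k[P]$ forces $P$ to be free (hence simplicial) via the classical smoothness criterion for affine toric varieties; this is legitimate and non-circular, since that theorem is established independently of this proposition. What each approach buys: yours is essentially a one-line corollary of the earlier theorem and, usefully, supplies the justification for the implication ``$\kVG$ polynomial $\Rightarrow$ $P$ free'' that the paper leaves implicit (regularity is intrinsic, $\CV$ is full-dimensional because $P$ is sharp, so it is generated by a full $\Z$-basis of $\Hom(P^{gp},\Z)$ and $P\cong\N^n$); the paper's computation is self-contained, bypasses the toric smoothness criterion, and exhibits an explicit orderly basis together with the coefficients expressing the remaining weights, rather than deducing existence abstractly.
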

\begin{proof}
If $\kVG$ is polynomial, then $P$ is free.  Since $\Cone(w_1,\dots,w_{d'})=
\Cone(v_1,\dots,v_d)$, after possibly reordering the $w_j$, we can assume that for $j\leq d$ there is a positive integer $c_j$ such that $w_j=c_jv_j$, and that for $j>d$ there are $c_{ij}\in\Q_{\geq0}$ such that
\[
w_j=\sum_{i=1}^d c_{ij}v_i.
\]
Then $i'^{gp}:P^{gp}\ra F'^{gp}$ is given by the $d'\times d$ matrix whose top $d\times d$ block is diagonal with $j$-th diagonal entry given by $c_j$, and whose bottom $(d'-d)\times d$ block is given by $(c_{ij})_{i,j}$.  Therefore, we see that $m_{d+1},\dots,m_{d'}$ is a basis for $M\otimes\Q$ and that for $j\leq d$, 
\[
m_j=-\sum_{i>d} \frac{c_{ij}}{c_j}m_i
\]
is a non-positive rational linear combination of $m_{d+1},\dots,m_{d'}$.
\end{proof}

We now turn to the second main result of this section, the Chevalley-Shephard-Todd theorem for diagonalizable group schemes.

\begin{theorem}
\label{thm:cst}
If the $G$-action on $V$ is orderly, then fix an orderly basis $m_1,\dots,m_\ell$ and let $F'_0$ be the face generated by the elements of $F'$ which map to positive linear combinations of $m_1,\dots,m_\ell$.  If $B$ denotes the image of ${F_0'}^{gp}$ in $A$ under $\pi^{gp}$, then $A/B$ is a finite abelian group and the induced action of $D(A/B)$ on $k[F'/F'_0]$ has the property that
\[
\kVG=k[F'/F'_0]^{D(A/B)};
\]
therefore $\kVG$ is polynomial if and only if the action of $D(A/B)$ on $k[F'/F'_0]$ is generated by pseudo-reflections, as defined in \emph{\cite[Def 1.2]{cst}}.
\end{theorem}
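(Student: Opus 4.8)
The plan is to produce an explicit isomorphism $\kVG\cong k[F'/F_0']^{D(A/B)}$ by a monomial computation and then feed it into the Chevalley--Shephard--Todd theorem for finite linearly reductive group schemes from \cite{cst}. Throughout I keep the notation fixed before Lemma \ref{l:msopbasis}: the map $\pi\colon F'\to A$, the exact morphism $i'\colon P\to F'$ with $\kVG=k[P]$ (Lemma \ref{l:stable}), the irreducible elements $e_1',\dots,e_{d'}'$ of $F'$ with weights $m_1,\dots,m_{d'}\in M=A/A_{tors}$, and the fixed orderly basis $m_1,\dots,m_\ell$; put $S=\{1,\dots,\ell\}$. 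Then $F_0'$ is the face of $F'$ generated by $e_1',\dots,e_\ell'$ (equivalently, by the irreducible elements of $F'$ whose weight is a positive combination of the orderly basis — the remaining weights being non-positive combinations, by Definition \ref{def:MSOP}), so $F'/F_0'$ is free on the classes $\bar e_j'$ for $j\notin S$. Two bookkeeping facts. First, ${F_0'}^{gp}\cap P^{gp}=0$: an element $\sum_{j\in S}c_j e_j'$ of ${F_0'}^{gp}$ lies in $P^{gp}=\ker\pi^{gp}$ only if $\sum_{j\in S}c_j m_j=0$ in $M$, forcing all $c_j=0$ since the $m_j$ ($j\in S$) form a $\Q$-basis; in particular $\{\pi^{gp}(e_j')\}_{j\in S}$ is a $\Z$-basis of $B=\pi^{gp}({F_0'}^{gp})$. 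Second, $A/B$ is finite: the image of $B$ in $M$ is $\sum_{j\in S}\Z m_j$, of finite index since the $m_j$ ($j\in S$) span $M\otimes\Q$, and $A_{tors}$ is finite. Hence $D(A/B)$ is a finite diagonalizable — so linearly reductive — group scheme; it is a subgroup of $D(A)=G$, it acts linearly on $\Spec k[F'/F_0']$ through the characters $\pi^{gp}(e_j')\bmod B$ ($j\notin S$), and this action is faithful because those characters generate $A/B$.

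Next I would set up the comparison map. The monoid surjection $F'\to F'/F_0'$ induces a $k$-algebra surjection $q\colon k[F']\to k[F'/F_0']$ which is the identity on $\bar e_j'$ ($j\notin S$) and sends $z^{e_j'}\mapsto 1$ ($j\in S$). Since $D(A/B)$ acts trivially on the coordinates indexed by $S$ (their weights lie in $B$), the map $q$ is $D(A/B)$-equivariant. As $k[P]\subset k[F']=k[V]$ consists of $G$-invariants, a fortiori of $D(A/B)$-invariants, the composite $\theta:=q|_{k[P]}\colon\kVG\to k[F'/F_0']$ lands in $k[F'/F_0']^{D(A/B)}$. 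The heart of the argument is to check that $\theta$ is an isomorphism onto this invariant ring.

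Injectivity is immediate: restricted to the monomial basis $\{z^p\}_{p\in P}$ the map $q$ is injective, since $p-p'\in{F_0'}^{gp}\cap P^{gp}=0$, and no $z^p$ maps to $0$. Surjectivity is the one place where orderliness does real work. The invariant ring $k[F'/F_0']^{D(A/B)}$ is spanned by the monomials $q(z^f)$ with $f\in F'$ supported on $\{e_j' : j\notin S\}$ and $\pi^{gp}(f)\in B$. Given such an $f=\sum_{j\notin S}c_j e_j'$ (all $c_j\ge 0$), there is a unique $g=\sum_{j\in S}\gamma_j e_j'\in{F_0'}^{gp}$ with $\pi(f)+\pi(g)=0$, because $\{\pi^{gp}(e_j')\}_{j\in S}$ is a $\Z$-basis of $B$. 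Writing each $m_j$ ($j\notin S$) as a \emph{non-positive} combination $\sum_{i\in S}t_{ij}m_i$ of the orderly basis and comparing coefficients in $M$ in the relation $\sum_{j\notin S}c_j m_j+\sum_{i\in S}\gamma_i m_i=0$ gives $\gamma_i=-\sum_{j\notin S}c_j t_{ij}\ge 0$. Hence $g$ lies in the face $F_0'$ itself, so $p:=f+g$ lies in $F'$ and has trivial $\pi$-weight, whence $p\in P$ by exactness of $i'$; and $\theta(z^p)=q(z^f)\,q(z^g)=q(z^f)$ since $q(z^g)=1$. This proves $\theta$ surjective, hence an isomorphism $\kVG\xrightarrow{\sim}k[F'/F_0']^{D(A/B)}$.

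Finally, $D(A/B)$ is a finite linearly reductive group scheme acting faithfully and linearly on the affine space $\Spec k[F'/F_0']$, so the Chevalley--Shephard--Todd theorem for finite linearly reductive group schemes proved in \cite{cst} says that $k[F'/F_0']^{D(A/B)}$ is a polynomial $k$-algebra if and only if the $D(A/B)$-action is generated by pseudo-reflections as in \cite[Def 1.2]{cst}. Transporting this across $\theta$ gives exactly the asserted equivalence. I expect the main obstacle to be the surjectivity step: it is precisely the non-positivity of the coefficients $t_{ij}$ furnished by orderliness that forces the correction term $g$ into the face $F_0'$ rather than merely into ${F_0'}^{gp}$, and without that one cannot lift an invariant monomial of $k[F'/F_0']$ back to an element of $P$.
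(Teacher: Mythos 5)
Your proposal is correct, and it reaches the isomorphism $\kVG\cong k[F'/F'_0]^{D(A/B)}$ by a genuinely more direct route than the paper. The paper's proof first leans on Theorem \ref{thm:msopsimp}: it uses the simpliciality of $P$ and the identity $\Cone(w_1,\dots,w_d)=\CV$ to factor $\pi'i'$ through the minimal free resolution $i:P\to F$ via a map $\psi$ and a multiplication-by-$n$ trick, concludes that $\pi'i':P\to F'/F'_0$ is an exact morphism of monoids by \cite[Prop I.4.1.3(2)]{ogus}, deduces the invariant-ring identity from the general principle that sharp exact morphisms into free monoids compute invariants of diagonalizable quotients, gets finiteness of the group from simpliciality of $P$, and only at the end identifies that group with $A/B$ by a separate diagram chase. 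You instead build $A/B$ in from the start, viewing $D(A/B)\subset G$ and restricting the quotient map $k[F']\to k[F'/F'_0]$ to $k[P]$; injectivity is your observation that ${F'_0}^{gp}\cap P^{gp}=0$, surjectivity is the monomial lifting argument, and finiteness of $A/B$ is a rank count that bypasses Theorem \ref{thm:msopsimp} entirely. The two arguments share the same engine — your computation $\gamma_i=-\sum_j c_j t_{ij}\geq 0$, which forces the correction term $g$ into the face $F'_0$ rather than merely into ${F'_0}^{gp}$, is precisely the non-positivity that makes the paper's $\pi'\psi$ diagonal and hence exact — but your version isolates exactly where orderliness is used, is self-contained, and avoids the external exactness lemma, whereas the paper's version recycles machinery (minimal free resolutions, exactness) already set up for the stacky resolution theorem. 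Both conclude identically by citing the Chevalley--Shephard--Todd theorem for finite linearly reductive group schemes from \cite{cst}.
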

\begin{proof}
Without loss of generality, we can assume $m_{d+1},\dots,m_{d'}$ is an orderly basis.  Then the proof of Theorem \ref{thm:msopsimp} shows that $\Cone(w_1,\dots,w_d)=\CV$.  After reordering $w_1,\dots,w_d$ if necessary, we can assume that for $j\leq d$ there are positive integers $c_j$ such that $w_j=c_jv_j$, and for $j>d$ there are $c_{ij}\in\Q_{\geq0}$ such that
\[
w_j=\sum_{i=1}^d c_{ij}v_i.
\]
Note that $F'_0$ is the face of $F'$ generated by the $e'_i$ for $i>d$.  Letting $n$ be a positive integer such that $nc_{ij}$ is an integer for all $i,j$, we have a commutative diagram
\[
\xymatrix{
F\ar[r]^\psi & F'\ar[r]^-{\pi'} & F'/F'_0\\
P\ar[u]^i\ar[r]^{i'} & F'\ar[u]^{\cdot n}\ar[r]^-{\pi'} & F'/F'_0\ar[u]^{\cdot n}
}
\]
Here $\pi'$ is the natural projection and 
\[
\psi(e_i)=nc_ie'_i+\sum_{j>d}nc_{ij}e'_j.
\]
We see then that $\pi'\psi(e_i)=nc_ie'_i$, and so $\pi'\psi$ is exact.  Since the multiplication by $n$ map from $F'/F'_0$ to itself is exact, \cite[Prop I.4.1.3(2)]{ogus} shows that $\pi'i'$ is exact as well.  Hence,
\[
\kVG=k[P]=k[F'/F'_0]^{D((F'/F'_0)^{gp}/\pi'i'(P^{gp}))}.
\]
Furthermore, since $P$ is simplicial by 
Theorem \ref{thm:msopsimp}, and since $F'/F'_0$ is free of the same rank as $P^{gp}$, we see that $(F'/F'_0)^{gp}/\pi'i'(P^{gp})$ is a finite group.  Note that by the definition of $B$, there exists a morphism $\varphi$ making the diagram 
\[
\xymatrix{
F'^{gp}\ar[r]^-{\pi'}\ar[d] & F'^{gp}/{F'_0}^{gp}\ar[d]^{\varphi}\\
A\ar[r] & A/B
}
\]
commute.  One easily checks that the kernel of $\varphi$ is $\pi'i'(P^{gp})$, and so $A/B$ is isomorphic to $(F'/F'_0)^{gp}/\pi'i'(P^{gp})$.  This proves that $A/B$ is a finite group and that
\[
\kVG=k[F'/F'_0]^{D(A/B)}
\]
as desired.  Lastly, since $D(A/B)$ is a finite diagonalizable group scheme, we see from Theorem 1.6 and Proposition 2.2 of \cite{cst} that 
$\kVG$ is polynomial if and only if the action of $D(A/B)$ on $k[F'/F'_0]$ is generated by pseudo-reflections.  
\end{proof}

\begin{example}
\label{ex:cst2}
\emph{
Let $V$ be a 3-dimensional vector space with basis $a,b,c$.  Consider the action of $G=\mathbb{G}_m\times\mu_2$ on $V$ described as follows.  Let $\mathbb{G}_m$ acts on $a,b,c$ with weights $1,1,-2$, respectively, let $\mu_2$ act trivially on $b$ and $c$, and act on $a$ via the non-trivial character.  One checks that $k[V]^G=k[a^2c,b^2c]$, which is polynomial.}

\emph{Note that $-2$ is an orderly basis for the $G$-action.  Let $F'$ be the free monoid on $a,b,c$.  Then $A=\Z\oplus\Z/2$ and $\pi:F'\to A$ is given by 
\[
\pi(a)=(1,1),\quad \pi(b)=(1,0),\quad \pi(c)=(-2,0).
\]
We see $F'_0$ is the submonoid of $F'$ generated by $c$, and $B=2\Z\oplus 0\subset A$.  The induced morphism
\[
F'/F'_0\longrightarrow A/B\simeq\Z/2\oplus\Z/2
\]
sending $\bar{a}$ to $(1,1)$ and $\bar{b}$ to $(1,0)$ yields an action of $\mu_2\times\mu_2$ on $k[F'/F'_0]$.  When the characteristic of $k$ is not 2, we can identify $\mu_2\times\mu_2$ with $\Z/2\times\Z/2$, and we see that $(0,1)$ and $(1,1)$ are pseudo-reflections that generate $\Z/2\times\Z/2$.  More generally, when $k$ has arbitrary characteristic, the action of $\mu_2\times\mu_2$ is generated by the pseudo-reflections $1\times\mu_2$ and the diagonal copy of $\mu_2$ (see \emph{\cite[Def 1.2]{cst}} for the definition of pseudo-reflections for actions of finite linearly reductive group schemes).  Hence, Theorem \ref{thm:cst} predicts $\kVG$ is polynomial, which is the case as we saw above.  We remark that
\[
k[V]^{\mathbb{G}_m}=k[a^2c,abc,b^2c]\simeq k[x,y,z]/(xz-y^2),
\]
which is not polynomial.
}
\end{example}

\section{Toric Artin Stacks}
\label{sec:toric}
Throughout this section, let $k$ be a field endowed with the trivial log structure. In Subsection \ref{subsec:stackyfan}, we generalize the definition of stacky fan given in \cite{bcs} and associate to a generalized stacky fan $\mathbf{\Sigma}=(N,\Sigma,\beta)$ a smooth log smooth Artin stack $\mf{X}(\mathbf{\Sigma})$.  
In Subsection \ref{subsec:Xgms}, we prove that $X(\Sigma)$ is the good moduli space of $\mf{X}(\mathbf{\Sigma})$. In Subsection \ref{subsec:qmfrtype}, we show that under suitable conditions, $\mf{X}(\mathbf{\Sigma})$ has a log geometric moduli interpretation.

\subsection{Generalized Stacky Fans and their associated Toric Artin Stacks}
\label{subsec:stackyfan}
Given an abelian group $A$, we let $A^*=\Hom(A,\Z)$.  Given a finitely-generated abelian group $N$ and a rational fan $\Sigma$ on $N\otimes\Q$, we let $\Sigma(1)$ be the set of rays of the fan.  We denote by $d$ the rank of $N$ and $n$ the order of $\Sigma(1)$.  Let $M=N^*$.

We introduce the following notion of a generalized stacky fan.  We frequently drop the word ``generalized'' when referring to it.
\begin{definition}
\label{def:genstackyfan}
\emph{A \emph{generalized stacky fan} $\mathbf{\Sigma}$ consists of a finitely-generated abelian group $N$, a rational fan $\Sigma$ on $N\otimes\Q$ such that the rays of $\Sigma$ span $N\otimes \Q$, 
a choice of $r\in\N$, and a morphism $\beta:\Z^{\Sigma(1)}\times\Z^r\to N$.  
Let $e_\rho$ be the generator of $\Z^{\Sigma(1)}$ corresponding to a ray $\rho\in\Sigma(1)$.  
We require that $\beta(e_\rho)\otimes1$ lie on the ray $\rho$ and that $\beta$ maps the standard generators of $\Z^r$ to the support of $\Sigma$.  We often suppress $r$ and write $\mathbf{\Sigma}=(N,\Sigma,\beta)$.
}
\end{definition}
Throughout this subsection and the next, we fix for every stacky fan $\mathbf{\Sigma}=(N,\Sigma,\beta)$ an ordering on the rays of $\Sigma(1)$ so that $\beta$ is a map from $\Z^{n+r}$.  In Subsection \ref{subsec:qmfrtype}, however, canonicity will be more important.
\begin{remark}
\label{rmk:artinstackytorus}
\emph{
Note that in Definition \ref{def:genstackyfan} we do not require 
that the $\beta(e_j)$ be distinct.  Some of the $\beta(e_j)$ can even be zero, which 
corresponds to the associated stack $\mf{X}(\mathbf{\Sigma})$ containing a copy of $[\mathbb{A}^1/\mathbb{G}_m]$.
}
\end{remark}
\begin{remark}
\label{rmk:jiang}
\emph{
In \cite[\S2]{jiang}, Jiang introduces a notion of extended stacky fans which is equivalent to our definition above, but 
he requires that $\Sigma$ be simplicial and 
the stacks he associates to extended stacky fans are all Deligne-Mumford.  His goal is to obtain suitable presentations of toric Deligne-Mumford stacks rather than construct toric Artin stacks.
}
\end{remark}
We show now how to associate to a stacky fan $\mathbf{\Sigma}=(N,\Sigma,\beta)$ a log Artin stack $(\mf{X}(\mathbf{\Sigma}),\MM_{\mf{X}(\mathbf{\Sigma})})$.  We follow the procedure in \cite{bcs}.  Consider the exact triangle
\[
\Z^{n+r}\stackrel{\beta}{\longrightarrow} N\longrightarrow \Cone(\beta)
\]
in the derived category.  Applying $R\Hom(-,\Z)$ and taking cohomology, we obtain an exact sequence
\[
N^*\stackrel{\beta^*}{\longrightarrow} (\Z^{n+r})^*\longrightarrow H^1(\Cone(\beta)^*)\longrightarrow\Ext^1_{\Z}(N,\Z)\longrightarrow0.
\]
We define $\beta^{\vee}:(\Z^{n+r})^*\to DG(\beta)$ to be the connecting homomorphism above and let $DG(\beta):=H^1(\Cone(\beta)^*)$.

\begin{remark}
\label{rmk:concrete-constr}
\emph{As in the last paragraph of \cite[p.195]{bcs}, we have the following concrete description of $DG(\beta)$ and $\beta^{\vee}$. Let}
\[
0\longrightarrow \Z^{\ell}\stackrel{Q}{\longrightarrow} \Z^{d+\ell}\longrightarrow N\longrightarrow0
\]
\emph{be a free resolution of $N$.  If $B:\Z^{n+r}\to \Z^{d+\ell}$ is a lift of $\beta$, then}
\[
DG(\beta)=\coker([BQ]^*)
\]
\emph{and $\beta^{\vee}$ is the composite}
$(\Z^{n+r})^*\to (\Z^{n+r+\ell})^*\to DG(\beta)$.
\end{remark}

We construct $\mf{X}(\mathbf{\Sigma})$ as the quotient of an open subscheme of $\mathbb{A}^{n+r}$.  Consider the ideal
\[
J_{\mathbf{\Sigma}}=\langle \prod_{\beta(e_i)\otimes1\notin\sigma}\!\!\!\!\!\!x_i \mid \sigma\in\Sigma\rangle
\]
of $k[x_1,\dots,x_{n+r}]$.  If we let $G_{\mathbf{\Sigma}}$ be the diagonalizable group scheme associated to $DG(\beta)$, then via $\beta^{\vee}$ we have a morphism $G_{\mathbf{\Sigma}}\to\mathbb{G}_m^{n+r}$.  Restricting the usual action of $\mathbb{G}_m^{n+r}$, we obtain an action of $G_{\mathbf{\Sigma}}$ on $\mathbb{A}^{n+r}=\Spec k[x_1,\dots,x_{n+r}]$.
Since $V(J_{\mathbf{\Sigma}})$ is a union of coordinate subspaces, we see that $Z_{\mathbf{\Sigma}}:=\mathbb{A}^{n+r}\setminus V(J_{\mathbf{\Sigma}})$ is $G_{\mathbf{\Sigma}}$-invariant.  We obtain a log structure $\MM_{Z_{\mathbf{\Sigma}}}$ on $Z_{\mathbf{\Sigma}}$ by pulling back the standard log structure on $\mathbb{A}^{n+r}$.  Note that the $G_{\mathbf{\Sigma}}$-action on $Z_{\mathbf{\Sigma}}$ extends to the log scheme $(Z_{\mathbf{\Sigma}},\MM_{Z_{\mathbf{\Sigma}}})$.  We define 
\[
\mf{X}(\mathbf{\Sigma})=[Z_{\mathbf{\Sigma}}/G_{\mathbf{\Sigma}}]
\]
and obtain a log structure $\MM_{\mf{X}(\mathbf{\Sigma})}$ on $\mf{X}(\mathbf{\Sigma})$ by descent.  We see then that $\mf{X}(\mathbf{\Sigma})$ is smooth and that $(\mf{X}(\mathbf{\Sigma}),\MM_{\mf{X}(\mathbf{\Sigma})})$ is log smooth.
\begin{definition}
\label{def:associatedtoric}
\emph{
The log stacks $(\mf{X}(\mathbf{\Sigma}),\MM_{\mf{X}(\mathbf{\Sigma})})$ obtained from stacky fans $\mathbf{\Sigma}$ via the above construction are called \emph{toric Artin stacks}.
}
\end{definition}
We end this subsection with a series of examples.
\begin{example}
\emph{If $\mathbf{\Sigma}=(0,0,\beta:\Z^d\to0)$, then $\mf{X}(\mathbf{\Sigma})=[\mathbb{A}^d/\mathbb{G}_m^d]$, which is a smooth toric Artin stack in the sense of Lafforgue \cite[IV.1.a]{lafforgue}.}
\end{example}
\begin{example}
\label{ex:simpgms}
\emph{
Let $N=\Z^2$ and let $\sigma$ be the cone generated by the rays $(1,0)$ and $(1,2)$.  Let $\Sigma$ be the rational fan consisting of $\sigma$ and its faces, so that the toric variety $X(\Sigma)$ is the spectrum of $R:=k[x,y,z]/(xy-z^2)$.  Consider the morphism
\[
\beta:\Z^3\longrightarrow N
\]
\[
\beta(e_1)=(1,0),\quad \beta(e_2)=(1,2),\quad \beta(e_3)=(1,1).
\]
If $\mathbf{\Sigma}=(N,\Sigma,\beta)$, then $\mf{X}(\mathbf{\Sigma})=[\mathbb{A}^3/\mathbb{G}_m]$, where $\mathbb{G}_m$ acts with weights $1,1,-2$.  As we saw in Example \ref{ex:cst2}, the $\mathbb{G}_m$-invariants of this action is given by $\Spec R$; that is, $X(\Sigma)$ is the good moduli space of $\mf{X}(\mathbf{\Sigma})$.  We will see in Theorem \ref{thm:toricgood} that this a general phenomenon.
}
\end{example}
In Theorem \ref{thm:main} we proved that every toroidal embedding $X$ has a stacky resolution.  If $X$ is a toric variety, then we can describe this stacky resolution as a toric stack:
\begin{example}
\label{ex:cantoricstack}
\emph{
Given a normal toric variety $X$ defined by a rational fan $\Sigma$ on $N$, let $\beta:\Z^{\Sigma(1)}\ra N$ send $e_\rho$ to the first lattice point on $\rho$.  Let $\mathbf{\Sigma}^{can}:=(N,\Sigma,\beta)$.  We will see in Theorem \ref{thm:twostacks} that $\mf{X}(\Sigma^{can})$ and the stack of Theorem \ref{thm:main} are isomorphic as log stacks over $X$.}

\emph{If $X(\Sigma)$ is a simplicial toric variety, $\mf{X}$ is a toric Deligne-Mumford stack, and $\pi:\mf{X}\ra X(\Sigma)$ is a coarse space map, then $\pi$ necessarily factors through $\mf{X}(\Sigma^{can})\ra X(\Sigma)$.  This is one part of the classification result in \cite{fant} $($\emph{cf.} ~\cite{iwtoric}$)$ known as the ``bottom-up construction.''}

\emph{We remark that such a ``bottom-up'' result does \emph{not} hold for toric Artin stacks.  For example, if $X(\Sigma)$ and $\mf{X}(\mathbf{\Sigma})$ are as in Example \ref{ex:simpgms}, then the good moduli space morphism $\pi:\mf{X}(\mathbf{\Sigma})\ra X(\Sigma)$ does not factor through $\mf{X}(\Sigma^{can})\ra X(\Sigma)$.  One way to see this is to apply Theorem \ref{thm:twostacks} and use the fact that $\pi$ is not a sliced resolution.
}
\end{example}
Let us now look at example where $\Sigma$ is a non-simplicial fan.  
\begin{example}
\label{ex:atiyahflop}
\emph{
Consider the cone $\sigma$ on $\Z^3$ generated by $(0,0,1),(1,0,1),(0,1,1),(1,1,1)$ and let $\Sigma$ be the fan consisting of the faces of $\sigma$.  There are two toric small resolutions of $X=X(\Sigma)$: one is given by triangulating $\Sigma$ by adding the two-dimensional face generated by $(0,0,1),(1,1,1)$; the other is obtained by triangulating $\Sigma$ by adding the two-dimensional face generated by $(1,0,1),(0,1,1)$.  Let $\pi_i:X_i\ra X$ denote these two resolutions.  We have a commutative diagram}
\[
\xymatrix{
X_1\ar[r]^{j_1}\ar[dr]_{\pi_1} & \mf{X}\ar[d]^\pi & X_2\ar[l]_{j_2}\ar[dl]^{\pi_2}\\
 & X &
}
\]
\emph{where the $j_i$ are open immersions and $\pi:\mf{X}\ra X$ is the stacky resolution of Theorem \ref{thm:main}.  To see this, by Example \ref{ex:cantoricstack} we know $\mf{X}=\mf{X}(\Sigma^{can})=[\mathbb{A}^4/\mathbb{G}_m]$ where $\mathbb{G}_m$ acts with weights $1,1,-1,-1$.  Letting $a,b,c,d$ be the coordinates of $\mathbb{A}^4$, we see that the two small resolutions are given by $[(\mathbb{A}^4\setminus Z_i)/\mathbb{G}_m]$, where $Z_1=V(c,d)$ and $Z_2=V(a,b)$.}
\end{example}

\subsection{$X(\Sigma)$ is the Good Moduli Space of $\mf{X}(\mathbf{\Sigma})$}
\label{subsec:Xgms}
The main result of this subsection is Theorem \ref{thm:toricgood} which shows that if $\mathbf{\Sigma}=(N,\Sigma,\beta)$ is a stacky fan, then $X(\Sigma)$ is the good moduli space of $\mf{X}(\mathbf{\Sigma})$.  As in Subsection \ref{subsec:stackyfan}, we fix for every stacky fan $\mathbf{\Sigma}=(N,\Sigma,\beta)$ an ordering on the rays of $\Sigma(1)$ so that $\beta$ is a map from $\Z^{n+r}$.  For each standard generator $e_i$ of $\Z^{n+r}$, we let $w_i$ denote the image of $\beta(e_i)$ in $N\otimes\Q$.  We denote the function $M:=N^*\ra\Z$ given by pairing with $w_i$ by $w_i(-)$.

Let $F_{\mathbf{\Sigma}}$ be the free monoid on the standard basis vectors $e_i^*$ of $(\Z^{n+r})^*$.  Then, identifying $\mathbb{A}^{n+r}$ with $\Spec k[F_{\mathbf{\Sigma}}]$, the action of $G_{\mathbf{\Sigma}}$ on $\mathbb{A}^{n+r}$ obtained via 
\[
F_{\mathbf{\Sigma}}\longrightarrow (\Z^{n+r})^*\stackrel{\beta^\vee}{\longrightarrow} DG(\beta)
\]
agrees with the action defined in the construction of $\mf{X}(\mathbf{\Sigma})$; here $F_{\mathbf{\Sigma}}\ra (\Z^{n+r})^*$ is the natural inclusion.  For each $\sigma\in\Sigma$, let
\[
x^\sigma=\prod_{w_i\notin\sigma}\!x_i
\]
and let $U_\sigma=\mathbb{A}^{n+r}\setminus V(x^{\sigma})$.  We see that $U_\sigma$ is $G_{\mathbf{\Sigma}}$-invariant and that $Z_{\mathbf{\Sigma}}$ is the union of the $U_\sigma$.  If we let $F_\sigma$ be the submonoid of $(\Z^{n+r})^*$ defined by
\[
F_\sigma=\{\sum_{i=1}^{n+r} a_ie_i^*\;\mid\; a_i\in\N\textrm{\ if\ }w_i\in\sigma\},
\]
then we see $U_\sigma=\Spec k[F_\sigma]$ and that the inclusion of monoids $F_{\mathbf{\Sigma}}\subset F_\sigma$ induces the natural open immersion $U_\sigma\ra\mathbb{A}^{n+r}$.

Lastly, let $P_\sigma=\sigma^\vee\cap M$, so that $X_\sigma=\Spec k[P_\sigma]$ is the corresponding torus-invariant open affine of $X(\Sigma)$.  Consider the map
\[
i_\sigma:P_\sigma\longrightarrow F_\sigma
\]
\[
p\longmapsto \sum_{i=1}^{n+r} w_i(p)e_i^*
\]
This induces a morphism $(U_\sigma,\MM_{U_\sigma})\ra(X_\sigma,\MM_{X_\sigma})$, which we abusively also denote by $i_\sigma$.  Here, $\MM_{U_\sigma}$ resp.~$\MM_{X_\sigma}$ is the restriction of the log structure $\MM_{\mathbb{A}^{n+r}}$ resp.~$\MM_{X(\Sigma)}$.  Note that the diagram
\[
\xymatrix{
P_\sigma\ar[r]^{i_\sigma}\ar[d] & F_\sigma\ar[d] & \\
N^*\ar[r]^-{\beta^*} & (\Z^{n+r})^*\ar[r]^-{\beta^\vee} & DG(\beta)
}
\]
commutes, where the vertical maps are the natural inclusions, and the bottom row is exact.  Therefore, the morphism $k[P_\sigma]\ra k[F_\sigma]$ induced by $i_\sigma$ factors through $k[F_\sigma]^{G_{\mathbf{\Sigma}}}$, and so we obtain a morphism
\[
\pi_\sigma:([U_\sigma/G_{\mathbf{\Sigma}}],\MM_{[U_\sigma/G_{\mathbf{\Sigma}}]})\longrightarrow (X_\sigma,\MM_{X_\sigma}),
\]
where $\MM_{[U_\sigma/G_{\mathbf{\Sigma}}]}$ is the log structure descended from $\MM_{U_\sigma}$.
\begin{lemma}
\label{l:isigmaexact}
The morphism $i_\sigma:P_\sigma\ra F_\sigma$ is exact.
\end{lemma}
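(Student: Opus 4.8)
Recall that exactness of $i_\sigma:P_\sigma\to F_\sigma$ means: if $p\in P_\sigma^{gp}=M$ satisfies $i_\sigma^{gp}(p)\in F_\sigma$, then $p\in P_\sigma$. Now $F_\sigma^{gp}=(\Z^{n+r})^*$ and an element $\sum a_i e_i^*$ of $(\Z^{n+r})^*$ lies in $F_\sigma$ precisely when $a_i\ge 0$ for every $i$ with $w_i\in\sigma$. Since $i_\sigma^{gp}(p)=\sum_i w_i(p)e_i^*$, the hypothesis $i_\sigma^{gp}(p)\in F_\sigma$ is exactly the condition $w_i(p)\ge 0$ for all $i$ with $w_i\in\sigma$. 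So what must be shown is: if $p\in M$ and $w_i(p)\ge 0$ for all $i$ such that $w_i\in\sigma$, then $p\in P_\sigma=\sigma^\vee\cap M$, i.e.\ $\langle p,v\rangle\ge 0$ for all $v\in\sigma$.

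The key geometric input is that, by the definition of a stacky fan (Definition \ref{def:genstackyfan}), for every ray $\rho\in\Sigma(1)$ there is some index $i$ with $w_i=\beta(e_\rho)\otimes 1$ lying on $\rho$; in particular every ray of $\sigma$ is of the form $\Q_{\ge 0}\cdot w_i$ for some $i$ with $w_i\in\sigma$. First I would record this observation. Then, given $p$ with $w_i(p)\ge 0$ for all $i$ with $w_i\in\sigma$, I would argue that $p$ pairs non-negatively with every ray generator of $\sigma$: each such generator is a positive rational multiple of some $w_i$ with $w_i\in\sigma$, hence $w_i(p)\ge 0$ forces non-negativity there. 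Since $\sigma$ is a rational polyhedral cone, it is generated by its ray generators (this is where I use that $\Sigma$ is a genuine fan, so each cone is generated by the rays it contains), and pairing with $p$ is linear, so $\langle p,v\rangle\ge 0$ for all $v\in\sigma$. That is precisely $p\in\sigma^\vee$, and since $p\in M$ by hypothesis we get $p\in\sigma^\vee\cap M=P_\sigma$, as desired.

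There is essentially no serious obstacle here: the statement unwinds to the tautology that the rays of $\sigma$ are among the $w_i$ with $w_i\in\sigma$, combined with the standard fact that a rational polyhedral cone is the cone on its extremal rays. The one point requiring a moment of care is to make sure that "every ray of $\sigma$ is some $\Q_{\ge0}w_i$": this is immediate from Definition \ref{def:genstackyfan}, which places a $\beta(e_\rho)$ on each ray $\rho$, and such a ray $\rho\subset\sigma$ has $w_\rho\in\sigma$, so the index $\rho$ is among those over which the constraint $w_i(p)\ge 0$ is imposed. One should also note that $F_\sigma$ as defined places no positivity constraint on the coordinates $a_i$ with $w_i\notin\sigma$, which is consistent with $F_\sigma^{gp}=(\Z^{n+r})^*$ and is exactly why only the inequalities indexed by $\{i: w_i\in\sigma\}$ enter — so I would make the description of $F_\sigma$ inside its group explicit before running the argument. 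The whole proof is then two or three lines.
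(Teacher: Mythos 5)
Your proposal is correct and is essentially the paper's own argument: the paper's proof likewise reduces exactness to the statement that $w_i(\xi)\geq 0$ for all $i$ with $w_i\in\sigma$ forces $\xi\in\sigma^\vee\cap M$, using precisely the observation that every ray of $\sigma$ contains at least one $w_i$. You have merely spelled out in more detail the identification of $F_\sigma$ inside $F_\sigma^{gp}=(\Z^{n+r})^*$ and the reduction to ray generators, which the paper leaves implicit.
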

\begin{proof}
Let $\xi\in P_{\sigma}^{gp}$ and suppose $i_{\sigma}^{gp}(\xi)\in F_\sigma$.  Then $w_i(\xi)\geq0$ for all $w_i\in\sigma$.  Since $P_\sigma=\sigma^\vee\cap M$ and every ray of $\sigma$ contains at least one $w_i$, we see $\xi\in P_\sigma$.
\end{proof}
\begin{proposition}
\label{prop:toriclocalgood}
The map $\pi_\sigma:[U_\sigma/G_{\mathbf{\Sigma}}]\to X_\sigma$ is a good moduli space morphism.
\end{proposition}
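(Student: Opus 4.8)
The plan is to realize $X_\sigma$ as the affine invariant quotient of $U_\sigma$ by $G_{\mathbf{\Sigma}}$ and then invoke \cite[Ex 8.3]{alper}, just as in the proof of Theorem~\ref{thm:main}. Since $G_{\mathbf{\Sigma}}=D(DG(\beta))$ is diagonalizable, hence linearly reductive, \cite[Ex 8.3]{alper} shows that $[U_\sigma/G_{\mathbf{\Sigma}}]\to\Spec\!\big(k[U_\sigma]^{G_{\mathbf{\Sigma}}}\big)$ is a good moduli space morphism. As $U_\sigma=\Spec k[F_\sigma]$, it will therefore suffice to check that $\pi_\sigma$ identifies $X_\sigma=\Spec k[P_\sigma]$ with $\Spec\!\big(k[F_\sigma]^{G_{\mathbf{\Sigma}}}\big)$; concretely, that the ring map $k[P_\sigma]\to k[F_\sigma]$ induced by $i_\sigma$ is injective with image $k[F_\sigma]^{G_{\mathbf{\Sigma}}}$.

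First I would compute the invariants. The $G_{\mathbf{\Sigma}}$-action on $k[F_\sigma]$ is the one coming from the $DG(\beta)$-grading in which the monomial $x^m$ has degree $\beta^\vee(m)$ for $m\in F_\sigma\subset(\Z^{n+r})^*$, so $k[F_\sigma]^{G_{\mathbf{\Sigma}}}$ is spanned by the monomials $x^m$ with $\beta^\vee(m)=0$; that is, $k[F_\sigma]^{G_{\mathbf{\Sigma}}}=k[F_\sigma\cap\ker\beta^\vee]$, the monoid algebra on the submonoid $F_\sigma\cap\ker\beta^\vee$ of $(\Z^{n+r})^*$. By the exactness of the sequence $N^*\xrightarrow{\beta^*}(\Z^{n+r})^*\xrightarrow{\beta^\vee}DG(\beta)$ recorded just before Lemma~\ref{l:isigmaexact}, this kernel is $\im\beta^*$, so $k[F_\sigma]^{G_{\mathbf{\Sigma}}}=k[F_\sigma\cap\im\beta^*]$.

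It then remains to show $i_\sigma(P_\sigma)=F_\sigma\cap\im\beta^*$. The inclusion $\subseteq$ is immediate from the commutative square relating $i_\sigma$ and $\beta^*$, which gives $i_\sigma(P_\sigma)=\beta^*(P_\sigma)\subseteq F_\sigma\cap\im\beta^*$. For $\supseteq$, take $m=\beta^*(\xi)\in F_\sigma$ with $\xi\in N^*$; the membership $\beta^*(\xi)\in F_\sigma$ says precisely that $w_i(\xi)\geq0$ for every $i$ with $w_i\in\sigma$, and since each ray of $\sigma$ contains one of these $w_i$ and $\sigma$ is generated by its rays — this is exactly the computation in the proof of Lemma~\ref{l:isigmaexact} — we conclude $\xi\in\sigma^\vee\cap M=P_\sigma$, so $m=i_\sigma(\xi)\in i_\sigma(P_\sigma)$. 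Finally, $i_\sigma$ is injective: $i_\sigma(p)=\sum_i w_i(p)e_i^*$, and the $w_i$ span $N\otimes\Q$ (the rays of $\Sigma$ span and each contains some $w_i$), so $w_i(p)=0$ for all $i$ forces $p=0$; hence the induced map on monoid algebras is injective as well. I do not anticipate a real obstacle: once the invariant ring is identified the argument is formal, and the only point demanding care is checking that the image of $k[P_\sigma]$ exhausts $k[F_\sigma]^{G_{\mathbf{\Sigma}}}$ rather than merely sitting inside it, which is where the exactness content of Lemma~\ref{l:isigmaexact} is reused.
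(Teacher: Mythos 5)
Your proof is correct, and it takes a genuinely more direct route than the paper's. The paper first identifies $G_{\mathbf{\Sigma}}$ with $D\bigl(F_\sigma^{gp}/i_\sigma^{gp}(P_\sigma^{gp})\bigr)$ and then splits into two cases: when $N$ is torsion-free it uses the concrete description of Remark~\ref{rmk:concrete-constr} (with $\ell=0$, $B=\beta$) to see $DG(\beta)=\coker(\beta^*)=F_\sigma^{gp}/i_\sigma^{gp}(P_\sigma^{gp})$, and when $N$ has torsion it passes to $\mathbf{\Sigma}'=(N/N_{tors},\Sigma,\beta')$ and runs a diagram chase to show $\eta:DG(\beta')\to DG(\beta)$ is injective, hence $G_{\mathbf{\Sigma}}\twoheadrightarrow G_{\mathbf{\Sigma}'}$ and the two invariant rings coincide. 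You avoid the case division entirely by never identifying $DG(\beta)$ with a cokernel: the invariants are the degree-zero monomials $k[F_\sigma\cap\ker\beta^\vee]$, and the exactness of $N^*\xrightarrow{\beta^*}(\Z^{n+r})^*\xrightarrow{\beta^\vee}DG(\beta)$ at the middle term (which the paper records and which holds whether or not $N$ has torsion --- the $\Ext^1_\Z(N,\Z)$ summand responsible for the paper's second case sits entirely to the right and never affects which monomials have degree zero) gives $\ker\beta^\vee=\im\beta^*$ in one stroke. From there both arguments use Lemma~\ref{l:isigmaexact} identically. Two small remarks: you make explicit the injectivity of $k[P_\sigma]\to k[F_\sigma]$, which the paper leaves implicit but which is genuinely needed for the conclusion; and your spanning argument for that injectivity requires reading Definition~\ref{def:genstackyfan} so that $\beta(e_\rho)\otimes 1$ is a \emph{nonzero} point of $\rho$ (as confirmed by the reformulation $\beta(e_\rho)=b_\rho v_\rho$ with $b_\rho>0$ in Subsection~\ref{subsec:qmfrtype}); with the degenerate reading the proposition itself would fail, so this is the right interpretation.
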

\begin{proof}
Since $i_\sigma$ is exact by Lemma \ref{l:isigmaexact}, we have
\[
k[P_\sigma]=k[F_\sigma]^{D(F_{\sigma}^{gp}/i_{\sigma}^{gp}(P_{\sigma}^{gp}))}
\]
We first prove the proposition in the case when $N$ is torsion-free.  Choose $\ell=0$ and $B=\beta$ as in Remark \ref{rmk:concrete-constr} so that the diagram
\[
\xymatrix{
P_\sigma^{gp}\ar[r]^{i_{\sigma}^{gp}}\ar[d]_{id} & F_\sigma^{gp}\ar[d]^{id}\\
N^*\ar[r]^-{B^*} & (\Z^{n+r})^*
}
\]
commutes.  This shows that the cokernel of $B^*$, namely $DG(\beta)$, is equal to $F_{\sigma}^{gp}/i_{\sigma}^{gp}(P_{\sigma}^{gp})$.  Therefore, 
$k[P_\sigma]=k[F_\sigma]^{G_{\mathbf{\Sigma}}}$
and so $\pi_\sigma$ is a good moduli space morphism.

We now handle the case when $N$ is not torsion-free.  Let $N_{tors}\simeq\bigoplus\Z/m_i\Z$, where the $m_i>1$.  Let $N'=N/N_{tors}$ and $\mathbf{\Sigma}'=(N',\Sigma,\beta')$ where $\beta'$ is the composite of $\beta$ and the projection of $N$ to $N'$.  There exists a free resolution of the form
\[
0\longrightarrow \Z^\ell \stackrel{Q}{\longrightarrow} N'\oplus\Z^\ell\longrightarrow N\longrightarrow 0
\]
with $Q=(0,\textrm{diag}(m_i))$.  Letting $B:N'\oplus\Z^\ell\to N$ be a lift of $\beta$, Remark \ref{rmk:concrete-constr} shows that 
\[
(N')^*\stackrel{(B')^*}{\longrightarrow} (\Z^{n+r})^*\stackrel{(\beta')^{\vee}}{\longrightarrow} DG(\beta')\longrightarrow 0
\]
is an exact sequence. Let $\pi:N'\oplus\Z^\ell\to N'$ be the projection and note that $\beta'=\pi B$. Applying Remark \ref{rmk:concrete-constr} to $\beta'$ and the free resolution $N'\stackrel{id}{\to} N'$, we obtain a commutative diagram
\[
\xymatrix{
(N')^*\ar[r]^{(B')^*}\ar[d]_{\pi^*} & (\Z^{n+r})^*\ar[r]^{(\beta')^{\vee}}\ar[d]\ar[dr]^{\beta^{\vee}} & DG(\beta')\ar@{-->}[d]^{\eta}\ar[r] & 0\\
(N'\times\Z^{\ell})^*\ar[r]^{(BQ)^*} & (\Z^{n+r+\ell})^*\ar[r] & DG(\beta)\ar[r] & 0
}
\]
with exact rows.  We therefore have a map $\eta$ making the diagram commute.  The left and middle vertical arrows are injective.  One easily checks that the left square is cartesian, and so $\eta$ is injective.  Applying the Cartier dualization functor $D(-)$, we have a commutative diagram
\[
\xymatrix{
G_{\mathbf{\Sigma}'}\ar[r]^-{D({\beta'}^\vee)} & \mathbb{G}_m^{n+r}\\
G_{\mathbf{\Sigma}}\ar[u]^-{D(\eta)}\ar[ur]_-{D({\beta}^\vee)} & 
}
\]
with $D(\eta)$ surjective.  Since the action of $G_{\mathbf{\Sigma}}$ resp.~$G_{\mathbf{\Sigma}'}$ on $U_\sigma$ is obtained by restricting the $\mathbb{G}_m^{n+r}$-action via $D(\beta^\vee)$ resp.~$D({\beta'}^\vee)$, we see that the induced map
\[
k[F_\sigma]^{G_{\mathbf{\Sigma}'}}\longrightarrow k[F_\sigma]^{G_{\mathbf{\Sigma}}}
\]
is an isomorphism.  Since the map $k[P_\sigma]\ra k[F_\sigma]^{G_{\mathbf{\Sigma}'}}$ induced by $i_\sigma$ is an isomorphism (by the case when $N$ is torsion-free), we see that $\pi_\sigma$ is a good moduli space morphism.
\end{proof}
We now prove the main theorem of this subsection:
\begin{theorem}
\label{thm:toricgood}
If $\mathbf{\Sigma}=(N,\Sigma,\beta)$ is a stacky fan, then $X(\Sigma)$ is the good moduli space of $\mf{X}(\mathbf{\Sigma})$.
\end{theorem}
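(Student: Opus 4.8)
The plan is to glue the local good moduli space morphisms $\pi_\sigma$ of Proposition~\ref{prop:toriclocalgood} into a global morphism $\pi\colon\mf{X}(\mathbf{\Sigma})\to X(\Sigma)$ and then invoke the fact that being a good moduli space morphism is local on the target. First I would construct $\pi$. Since $\mf{X}(\mathbf{\Sigma})=[Z_{\mathbf{\Sigma}}/G_{\mathbf{\Sigma}}]$ with $Z_{\mathbf{\Sigma}}=\bigcup_\sigma U_\sigma$ and the $U_\sigma$ are $G_{\mathbf{\Sigma}}$-invariant, it suffices to glue the underlying $G_{\mathbf{\Sigma}}$-invariant morphisms $U_\sigma\to X_\sigma\hookrightarrow X(\Sigma)$ induced by $i_\sigma$. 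On the overlap $U_\sigma\cap U_\tau$, which contains the dense torus $\mathbb{G}_m^{n+r}$, both maps to $X(\Sigma)$ are induced on characters by $\beta^*\colon M\to(\Z^{n+r})^*$ (this is precisely $i_\sigma^{gp}$, by the commuting square preceding Lemma~\ref{l:isigmaexact}), hence agree on the torus; as $X(\Sigma)$ is separated and $U_\sigma\cap U_\tau$ is reduced, they agree. The log structures cause no trouble, since every $\MM_{U_\sigma}$ is the restriction of $\MM_{\mathbb{A}^{n+r}}$ and every $\MM_{X_\sigma}$ is the restriction of $\MM_{X(\Sigma)}$. This produces $\pi$, together with the $G_{\mathbf{\Sigma}}$-invariant morphism $q\colon Z_{\mathbf{\Sigma}}\to X(\Sigma)$ that it lifts.

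Next I would identify the base changes of $\pi$ along the affine charts. The $X_\sigma$ with $\sigma$ a maximal cone of $\Sigma$ form an open cover of $X(\Sigma)$, and the key claim is that $q^{-1}(X_\sigma)=U_\sigma$, so that $\mf{X}(\mathbf{\Sigma})\times_{X(\Sigma)}X_\sigma\cong[U_\sigma/G_{\mathbf{\Sigma}}]$. The inclusion $U_\sigma\subseteq q^{-1}(X_\sigma)$ holds because $q|_{U_\sigma}=\pi_\sigma$ maps into $X_\sigma$. For the converse, take $z\in U_\tau$ with $q(z)\in X_\sigma$; then $q(z)\in X_\sigma\cap X_\tau=X_\nu$ where $\nu=\sigma\cap\tau$ is a common face. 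Writing $X_\nu=D(\chi^m)\subseteq X_\tau$ for $m\in\tau^\vee\cap M$ with $m^\perp\cap\tau=\nu$, the formula $i_\tau(m)=\sum_i w_i(m)e_i^*$ shows that $\pi_\tau^{-1}(X_\nu)$ is cut out in $U_\tau$ by nonvanishing of the coordinates $x_i$ with $w_i\in\tau\setminus\nu$; since the $x_i$ with $w_i\notin\tau$ are already units on $U_\tau$, this locus equals $U_\nu$, and $U_\nu\subseteq U_\sigma$ because $\nu$ is a face of $\sigma$. Hence $z\in U_\sigma$, proving the claim.

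Finally, by Proposition~\ref{prop:toriclocalgood} each $[U_\sigma/G_{\mathbf{\Sigma}}]\to X_\sigma$ is a good moduli space morphism, so the base change of $\pi$ along the fppf (indeed Zariski) cover $\coprod_\sigma X_\sigma\to X(\Sigma)$ is a good moduli space morphism; by \cite[Prop 4.7 (ii)]{alper}, $\pi$ itself is one, and therefore $X(\Sigma)$ is the good moduli space of $\mf{X}(\mathbf{\Sigma})$.

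The only step requiring genuine work is the chart identification $q^{-1}(X_\sigma)=U_\sigma$; everything else is formal. The content of that step is the combinatorial bookkeeping relating the $w_i$, the faces of $\sigma$, and the monomials defining the $U_\sigma$, which is the analogue here of the chart descriptions underlying \cite{bcs}.
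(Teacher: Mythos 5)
Your proof is correct and follows essentially the same route as the paper: both reduce to Proposition~\ref{prop:toriclocalgood} by identifying the preimage of each $X_\sigma$ in $\mf{X}(\mathbf{\Sigma})$ with $[U_\sigma/G_{\mathbf{\Sigma}}]$, and the combinatorial heart of your chart identification (choosing $m\in\tau^\vee\cap M$ with $m^\perp\cap\tau=\nu$ and checking which coordinates $x_i$ must be inverted) is exactly the paper's verification that the squares relating $U_\tau\to X_\tau$ and $U_\sigma\to X_\sigma$ are cartesian. The only differences are cosmetic: you build $\pi$ first by gluing via density of the torus and then check the cartesian property, whereas the paper deduces existence and uniqueness of $\pi_{\mathbf{\Sigma}}$ directly from the cartesian squares, and it proves the slightly stronger statement at the level of log stacks.
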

\begin{proof}
We prove the stronger assertion that there is a unique morphism $\pi_{\mathbf{\Sigma}}:(\mf{X}(\mathbf{\Sigma}),\MM_{\mf{X}(\mathbf{\Sigma})})\to (X(\Sigma),\MM_{X(\Sigma)})$ such that
\[
\xymatrix{
([U_\sigma/G_{\mathbf{\Sigma}}],\MM_{[U_\sigma/G_{\mathbf{\Sigma}}]})\ar[r]\ar[d]_{\pi_\sigma} & (\mf{X}(\mathbf{\Sigma}),\MM_{\mf{X}(\mathbf{\Sigma})})\ar[d]^{\pi_{\mathbf{\Sigma}}}\\
(X_\sigma,\MM_{X_\sigma})\ar[r] & (X(\Sigma),\MM_{X(\Sigma)})
}
\]
is cartesian for all $\sigma\in\Sigma$; here, the horizontal maps are the natural open immersions. Since $\pi_\sigma$ is a good moduli space morphism by Proposition \ref{prop:toriclocalgood}, it follows from Lemma 6.3 and Proposition 7.9 of \cite{alper} that $\pi_{\mathbf{\Sigma}}$ is as well.

Let $\sigma\in\Sigma$ and let $\tau$ be a face of $\sigma$.  Then the diagram
\[
\xymatrix{
(U_\tau,\MM_{U_\tau})\ar[r]\ar[d]_{i_\tau} & (U_\sigma,\MM_{U_\sigma})\ar[d]^{i_\sigma}\\
(X_\tau,\MM_{X_\tau})\ar[r] & (X_\sigma,\MM_{X_\sigma})
}
\]
commutes, where the horizontal maps are the natural open immersions.  We claim that the diagram is cartesian.  To prove this, we show that if we have a commutative diagram of monoids
\[
\xymatrix{
Q & & \\
 & F_\tau\ar@{-->}[ul] & F_\sigma\ar[l]\ar[ull]_{\phi}\\
 & P_\tau\ar[u]_{i_\tau}\ar[uul]^{\psi} & P_\sigma\ar[u]^{i_\sigma}\ar[l]
}
\]
then there is a unique dotted arrow making the diagram commute.  This is equivalent to showing that if $w_i$ is in $\sigma$ but not in $\tau$, then $\phi(e_i)$ is a unit.  By \cite[\S1.2 Prop 2]{fulton}, there is some $p\in P_\sigma$ such that $\tau=\sigma\cap p^{\perp}$ and $P_\tau=P_\sigma + \N\cdot(-p)$; here, this is an equality of submonoids of $M$.  Note then that $\psi(p)$ is a unit and that 
\[
\psi(p)=\phi\,i_\sigma(p)=\sum_j w_j(p)\phi(e_j).
\]
Let $i$ be such that $w_i$ is in $\sigma$ but not in $\tau$.  Since $w_i\in\sigma$, we see $w_i(p)\geq0$.  Since $w_i$ is not in $\tau$ and since $\tau=\sigma\cap p^{\perp}$, we must have $w_i(p)>0$, and so $\phi(e_i)$ is a unit, as desired.

We see then that
\[
\xymatrix{
([U_\tau/G_{\mathbf{\Sigma}}],\MM_{[U_\tau/G_{\mathbf{\Sigma}}]})\ar[r]\ar[d]_{\pi_\tau} & ([U_\sigma/G_{\mathbf{\Sigma}}],\MM_{[U_\sigma/G_{\mathbf{\Sigma}}]})\ar[d]^{\pi_\sigma}\\
(X_\tau,\MM_{X_\tau})\ar[r] & (X_\sigma,\MM_{X_\sigma})
}
\]
is cartesian.  It follows that there is a unique morphism $\pi_{\mathbf{\Sigma}}:(\mf{X}(\mathbf{\Sigma}),\MM_{\mf{X}(\mathbf{\Sigma})})\ra (X(\Sigma),\MM_{X(\Sigma)})$ whose base change to $(X_\sigma,\MM_{X_\sigma})$ is $\pi_\sigma$.
\end{proof}

\subsection{A Moduli Interpretation of $\mf{X}(\mathbf{\Sigma})$}
\label{subsec:qmfrtype}
We begin this subsection by associating to a stacky fan $\mathbf{\Sigma}=(N,\Sigma,\beta)$ with $N$ torsion-free and distinct $\beta(e_i)$, a smooth log smooth Artin stack $\mf{X}_{\mathbf{\Sigma}}$ having $X(\Sigma)$ as a good moduli space.  The stack $\mf{X}_{\mathbf{\Sigma}}$ is constructed as a moduli space along the same lines as in Theorems \ref{thm:main} and \ref{thm:admmain}.  We then show in Theorem \ref{thm:twostacks} that $\mf{X}(\mathbf{\Sigma})$ is isomorphic to $\mf{X}_{\mathbf{\Sigma}}$ as log stacks over $X(\Sigma)$, thereby giving a moduli interpretation to $\mf{X}(\mathbf{\Sigma})$.

Although the definition of $\mf{X}_{\mathbf{\Sigma}}$ makes sense for any stacky fan, Theorem \ref{thm:twostacks} is false without the above assumptions on $\mathbf{\Sigma}$. The stacks $\mf{X}_{\mathbf{\Sigma}}$ that we construct have trivial generic stabilizer; to ensure the same is true for $\mf{X}(\mathbf{\Sigma})$, we must assume $N$ is torsion-free. The assumption that the $\beta(e_i)$ are distinct is necessary in showing that $\mf{X}(\mathbf{\Sigma})$ and $\mf{X}_{\mathbf{\Sigma}}$ are locally isomorphic (see Proposition \ref{prop:toriccomp}, which is the analogue of Proposition \ref{prop:comp}).


Note that if $\mathbf{\Sigma}=(N,\Sigma,\beta)$ is a stacky fan with $N$ torsion-free, then giving the map $\beta$ is equivalent to choosing a positive integer $b_\rho$ for every $\rho\in\Sigma(1)$ and choosing for every $j\in\{1,2,\dots,r\}$ an element $w_j\in N$ which lies in the support of $\Sigma$.  Given this equivalence, throughout this subsection, we denote stacky fans by $\mathbf{\Sigma}=(N,\Sigma;b_\rho;w_j)$.

As in the previous subsection, we denote by $w_j(-)$ the function $M:=N^*\ra\Z$ given by pairing with $w_j$.  Given $\rho\in\Sigma(1)$, let $v_\rho$ denote the first lattice point on $\rho$, and let $w_\rho=b_\rho v_\rho$.

We now work toward defining the morphisms which $\mf{X}_{\mathbf{\Sigma}}$ parameterizes.
\begin{definition}
\emph{If $P$ is a toric sharp monoid, then a \emph{datum} $D$ \emph{for} $P$ is a choice of $r\in\N$, a positive integer $b_\rho$ for every ray $\rho$ of $\CV$, and a morphism $w_j:P\to\N$ for every $j\in\{1,2,\dots,r\}$.  We frequently suppress $r$ and write $D=(b_\rho;w_j)$.}
\end{definition}
Given a stacky fan $\mathbf{\Sigma}=(N,\Sigma;b_\rho;w_j)$ with $N$ torsion-free, then for every $x\in X:=X(\Sigma)$, we obtain an associated datum of $\bbar{\MM}_{X,\bar{x}}$ as follows.  Let $I(\bar{x})$ be the set of irreducible components of the inverse image of $X\setminus X^{triv}$ in $\Spec\OO_{X,\bar{x}}$ and let 
\[
H_{\bar{x}}=\alpha_{\bar{x}}^{-1}(\OO_{X,\bar{x}}^*),
\]
where $\alpha:\MM_X\to\OO_X$ is the structure morphism of the log structure $\MM_X$.  If $w_j(H_{\bar{x}})=0$, then we have an induced morphism $\bar{w}_j:\bbar{\MM}_{X,\bar{x}}\to\N$.  We can therefore define a datum for $\bbar{\MM}_{X,\bar{x}}$ by
\[
D_{\mathbf{\Sigma},\bar{x}}=(b_\rho\textrm{\ s.t.\ }\rho\in I(\bar{x});\bar{w}_j\textrm{\ s.t.\ }w_j(H_{\bar{x}})=0).
\]
\begin{remark}
\label{rmk:simplerformulation}
\emph{
The definition of $D_{\mathbf{\Sigma},\bar{x}}$ can be stated equivalently as follows.  The point $x$ lies in some torus orbit, which corresponds to the interior of a cone $\sigma\in \Sigma$.  Then $w_j(H_{\bar{x}})=0$ if and only if the lattice point $w_j$ lies in $\sigma$, in which case the function $w_j(-)$ assumes non-negative values on $\bbar{\MM}_{X,\bar{x}}=\sigma^\vee\cap M$.  The function $\bbar{\MM}_{X,\bar{x}}\ra\N$ obtained by restricting $w_j(-)$ is $\bar{w}_j$.  Therefore, $D_{\mathbf{\Sigma},\bar{x}}$ simply consists of those $w_j\in\sigma$ and those $b_\rho$ for which $v_\rho\in\sigma$.
}
\end{remark}
\begin{definition}
\label{def:admtoric}
\emph{If $P$ is a toric sharp monoid and $D=(b_\rho;w_j)$ is a datum for $P$, then we say a morphism $i':P\to F'$ is a $D$-\emph{free resolution} if there is a minimal free resolution $i:P\to F$ and an isomorphism $\xi:F'\to F\oplus\N^r$ making the diagram}
\[
\xymatrix{
F\ar[r]^{\cdot(b_\rho)} & F\\
P\ar[u]^i\ar[r]^-{\xi i'}\ar[dr]_{(w_j)} & F\oplus\N^r\ar[u]\ar[d]\\
 & \N^r
}
\]
\emph{commute; the two arrows out of $F\oplus\N^r$ are the natural projections.  Note we are implicitly using that the rays of $\CV$ are in canonical bijection with the irreducible elements of $F$.  We say that $P\to F''$ is a $D$-\emph{sliced resolution} if there is a $D$-free resolution $i':P\to F'$, a face $H$ of $F\oplus\N^r$ such that $i'(P)\cap H=0$, and an isomorphism $F''\to F'/H$ over $P$.}
\end{definition}

\begin{definition}
\label{def:admstackyfan}
\emph{If $\mathbf{\Sigma}$ is a stacky fan with $N$ torsion-free and if $X=X(\Sigma)$, then a morphism $f:(Y,\MM_Y)\to (X,\MM_X)$ from an fs log scheme is a $\mathbf{\Sigma}$-\emph{sliced resolution} if for all geometric points $y$ of $Y$, the induced morphism $\bbar{\MM}_{X,f(y)}\to \bbar{\MM}_{Y,y}$ is a $D_{\mathbf{\Sigma},f(y)}$-sliced resolution. Here we are using the proof of Proposition \ref{prop:components} to identify $I(f(y))$ with the set of irreducible elements of the minimal free resolution of $\bbar{\MM}_{X,f(y)}$.}
\end{definition}

Note that 
if $\mathbf{\Sigma}$ is a stacky fan for which $r=0$ (\emph{i.e}.~a stacky fan in the sense of \cite{bcs}) and for which $N$ is torsion-free, then a morphism 
of log schemes is a $\mathbf{\Sigma}$-sliced resolution if and only if it is a $(b_\rho)$-sliced resolution in the sense of Definition \ref{def:admtypebi}.

Given a stacky fan $\mathbf{\Sigma}$, let $X=X(\Sigma)$.  We define a fibered category $\mf{X}_{\mathbf{\Sigma}}$ over $X$-schemes as follows.  
Objects of $\mf{X}_{\mathbf{\Sigma}}(T)$ are pairs $(\NN,f)$, where $\NN$ is a fine log structure on $T$ and $f:(T,\NN)\to (X,\MM_X)$ is a $\mathbf{\Sigma}$-sliced resolution whose map on underlying schemes is the structure morphism to $X$.  A morphism $(\NN,f)\ra(\NN',f')$ of objects of $\mf{X}_{\mathbf{\Sigma}}(T)$ is a strict morphism $h:(T,\NN)\to (T',\NN')$ such that $f=f'h$.

It follows from \cite[Thm A.1]{log} that $\mf{X}_{\mathbf{\Sigma}}$ is a stack on the fppf site of $X$.  The proof that $\mf{X}_{\mathbf{\Sigma}}$ is algebraic is similar to the proofs of Theorems \ref{thm:main} and \ref{thm:admmain}, so 
we indicate only where changes are necessary.
\begin{proposition}
\label{prop:toric2.12}
Let $\mathbf{\Sigma}=(N,\Sigma;b_\rho;w_j)$ be a stacky fan such that $N$ is torsion-free and $X:=X(\Sigma)=\Spec k[P]$, where $P$ is a toric sharp monoid.  Let $D=D_{\mathbf{\Sigma},\bar{x}}$, where $x\in X$ is the torus-invariant point.  If $i':P\to F'$ is a $D$-free resolution, then the induced morphism $f:X\to\Spec k[F']$ of log schemes is a $\mathbf{\Sigma}$-sliced resolution.
\end{proposition}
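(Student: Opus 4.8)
The plan is to follow the proof of Proposition~\ref{prop:2.12} essentially verbatim, replacing ``sliced resolution'' by ``$\mathbf{\Sigma}$-sliced resolution'' and replacing the appeal to Proposition~\ref{prop:face} by the observation that slicing a $D$-free resolution by the face generated by a face $P_0$ of $P$ again yields a $D'$-free resolution for the appropriate datum $D'$. Since any two minimal free resolutions of $P$ are uniquely isomorphic over $P$, I would fix $i\colon P\to F$ with $F$ free on the first lattice points $v_\rho$, $\rho\in\Sigma(1)$, of the extremal rays of $\CV$, and use the isomorphism $\xi$ of Definition~\ref{def:admtoric} to identify $F'=F\oplus\N^r$, so that $i'(p)=\bigl((b_\rho v_\rho(p))_\rho,\,(w_j(p))_j\bigr)$; in particular $i'$ is injective. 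Let $f\colon\Spec k[F']\to X$ be the morphism of log schemes induced by $i'$. We must verify the condition of Definition~\ref{def:admstackyfan} at every geometric point of $\Spec k[F']$.

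Fix such a point $\bar t$, with corresponding prime $\mf{p}\subset k[F']$, let $H\subseteq F'$ be the face of elements mapping to units in $k[F']_{\mf{p}}$, and set $P_0=(i')^{-1}(H)$, a face of $P$. Exactly as in Proposition~\ref{prop:2.12}, $\bbar{\MM}_{F',\bar t}=F'/H$, $\bbar{\MM}_{X,f(\bar t)}=P/P_0$, and $\bbar{\MM}_{X,f(\bar t)}\to\bbar{\MM}_{F',\bar t}$ is the map $\eta\colon P/P_0\to F'/H$ induced by $i'$. Let $\sigma\in\Sigma$ be the cone whose relative interior meets the torus orbit of $f(\bar t)$, so that $P_0=P\cap\sigma^\perp$; by Remark~\ref{rmk:simplerformulation}, $D_{\mathbf{\Sigma},f(\bar t)}$ consists of the $b_\rho$ with $v_\rho\in\sigma$ together with the induced maps $\bar w_j$ for the $j$ with $w_j\in\sigma$. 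Letting $F'_0$ be the face of $F'$ generated by $i'(P_0)$, one has $F'_0\subseteq H$ and $(H/F'_0)\cap(P/P_0)=0$, so $\eta$ factors as $P/P_0\to F'/F'_0\to F'/H$ with first map $\bar{\imath}'$, and it suffices to prove that $\bar{\imath}'$ is a $D_{\mathbf{\Sigma},f(\bar t)}$-free resolution.

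The heart of the matter is to identify $F'/F'_0$. A face of $F'=F\oplus\N^r$ is a direct sum of a face of $F$ and a face of $\N^r$, and from the formula for $i'$ one reads off that the $F$-component of $F'_0$ is the face generated by $\{v_\rho : v_\rho(P_0)\neq 0\}$ and the $\N^r$-component is the face generated by $\{e'_j : w_j(P_0)\neq 0\}$. The computation in the proof of Proposition~\ref{prop:face} shows $v_\rho(P_0)\neq 0$ iff $v_\rho\notin\sigma$, and identifies $\bar{\imath}\colon P/P_0\to F/F_0$ — with $F_0$ the $F$-component of $F'_0$ — with the minimal free resolution $P/P_0\to F(P/P_0)$, whose irreducible elements are the images of the $v_\rho$ with $v_\rho\in\sigma$. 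Similarly I would show $w_j(P_0)\neq 0$ iff $w_j\notin\sigma$: if $w_j\in\sigma$ then $w_j$ annihilates $\sigma^\perp\supseteq P_0$; conversely, $w_j$ lies in the support of $\Sigma$ and hence in the maximal cone $\sigma_{\max}$ of $\Sigma$, so if $w_j\notin\sigma$ the correspondence between faces of $\sigma_{\max}$ and faces of $\sigma_{\max}^\vee$ produces an $m\in\sigma_{\max}^\vee\cap\sigma^\perp\cap M=P_0$ with $\langle w_j,m\rangle>0$. Granting this, $F'/F'_0\cong F(P/P_0)\oplus\N^{r'}$ with $r'=\#\{j : w_j\in\sigma\}$, and unwinding the induced map shows that $\bar{\imath}'$ sends $\bar p$ to $\bigl((b_\rho v_\rho(p))_{v_\rho\in\sigma},\,(\bar w_j(\bar p))_{w_j\in\sigma}\bigr)$, which is precisely a $D_{\mathbf{\Sigma},f(\bar t)}$-free resolution with underlying minimal free resolution $P/P_0\to F(P/P_0)$. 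Hence $\eta$ is a $D_{\mathbf{\Sigma},f(\bar t)}$-sliced resolution, and since $\bar t$ was arbitrary, $f$ is a $\mathbf{\Sigma}$-sliced resolution.

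The only step with genuine content beyond transcribing the proofs of Propositions~\ref{prop:2.12} and~\ref{prop:face} is the equivalence $w_j(P_0)\neq 0\iff w_j\notin\sigma$ — equivalently, that the $\N^r$-part of $F'_0$ is indexed exactly by the $w_j$ outside $\sigma$ — and this is where the hypotheses that $N$ is torsion-free (so $\Spec k[P]$ really is the toric variety $X(\Sigma)$, with $P=\sigma_{\max}^\vee\cap M$) and that the $w_j$ lie in the support of $\Sigma$ are used.
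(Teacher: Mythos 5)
Your proof is correct and follows essentially the same route as the paper's: reduce to the standard form $F'=F\oplus\N^r$, pass to a geometric point, factor the stalk map through the quotient by the face generated by $i'(P_0)$, invoke Proposition \ref{prop:face} for the $F$-summand, and handle the $\N^r$-summand directly. The only difference is that you spell out the identification of the resulting datum with $D_{\mathbf{\Sigma},f(\bar t)}$ (the equivalences $v_\rho(P_0)=0\iff v_\rho\in\sigma$ and $w_j(P_0)=0\iff w_j\in\sigma$), which the paper leaves implicit in Definition \ref{def:admstackyfan} and Remark \ref{rmk:simplerformulation}.
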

\begin{proof}
Let $i:P\ra F$ be a minimal free resolution. It suffices to prove the proposition after replacing $i'$ by an isomorphic map, so we can assume $F'=F\oplus\N^r$ and $i'(p)=(i(p), w_1(p),\dots,w_r(p))$.  Let $\bar{t}$ be a geometric point of $\Spec k[F']$ whose image in $\Spec k[F']$ is the prime ideal $\mf{p}$ of $k[F]$.  Let $H''=H\oplus H'$ be the face of $F'=F\oplus\N^r$ consisting of elements which map to units under $F'\to k[F']\to k[F']_{\mf{p}}$.  Then $\bbar{\MM}_{P,f(\bar{t})}\to \bbar{\MM}_{F',\bar{t}}$ is given by the map $\bar{\imath}'$ making the diagram
\[
\xymatrix{
P\ar[r]^{i'}\ar[d] & F'\ar[d]\\
P/P_0\ar[r]^-{\bar{\imath}'} & F/H\oplus\N^r/H'
}
\]
commute, where $P_0=i'(P)\cap H''$.  We must show that $\bar{\imath}'$ is a $D_{\mathbf{\Sigma},\bar{t}}$-sliced resolution.  Note first that if $H'_0$ denotes the face of $\N^r$ generated by the $e_j$ with $w_j(P_0)\neq0$, then commutativity of the above diagram shows that $H'_0\subset H'$.  Similarly, we see that $F_0\subset H$, where $F_0$ denotes the face of $F$ generated by $i(P_0)$.  As a result, we have a commutative diagram
\[
\xymatrix{
P\ar[r]^{i'}\ar[d] & F'\ar[dr]\ar[d]^{\pi}\\
P/P_0\ar[r]^-{i''} & F/F_0\oplus\N^r/H'_0\ar[r]^{\pi'} & F/H\oplus\N^r/H'
}
\]
where the bottom row composes to $\bar{\imath}'$, and $\pi$ and $\pi'$ are the natural projections.  By Proposition \ref{prop:face}, the natural morphism from $P/P_0$ to $F/F_0$ is a minimal free resolution.  Note that $i''(\bar{p})=(w_\rho(p);w_j(p))$ for $\rho$ such that $v_\rho(P_0)=0$ and $j$ such that $w_j(P_0)=0$; that is, $i''$ is a $D_{\mathbf{\Sigma},\bar{t}}$-free resolution.

To complete the proof, we must show $i''(P/P_0)\cap(H/F_0\oplus H'/H'_0)=0$.  This amounts to showing that if $w_\rho(p)e_\rho\in H$ for all $\rho$ such that $v_\rho(P_0)=0$ and if $w_j(p)e_j\in H'$ for all $w_j(P_0)=0$, then $w_\rho(p)e_\rho\in H$ for all $\rho$ and $w_j(p)e_j\in H'$ for all $j$.  If $\rho$ is such that $v_\rho(P_0)\neq0$, then $e_\rho\in F_0\subset H$ and so $w_\rho(p)e_\rho\in H$.  Similarly, if $j$ is such that $w_j(P_0)\neq0$, then $e_j\in H'_0\subset H'$ and so $w_j(p)e_j\in H'$.  This shows the above intersection is trivial.
\end{proof}

The proofs of Proposition \ref{prop:comp} and Theorem \ref{thm:main} then yield the following two results. Note the assumption in Proposition \ref{prop:toriccomp} that $b_\rho v_\rho,w_1,\dots,w_r$ be distinct is necessary in showing fullness of the map $[\Spec k[F']/G]\to\mf{X}_{\mathbf{\Sigma}}$.
\begin{proposition}
\label{prop:toriccomp}
Let $\mathbf{\Sigma}=(N,\Sigma;b_\rho;w_j)$ be a stacky fan such that $\Sigma$ consists of the faces of a single cone $\sigma$, $N$ is torsion-free, and the $b_\rho v_\rho,w_1,\dots,w_r$ are distinct.  Let $X:=X(\Sigma)=\Spec k[P]$, where $P=\sigma^\vee\cap M$.  Then $\mf{X}_{\mathbf{\Sigma}}$ is an Artin stack.  Moreover, if $x\in X$ is the torus-fixed point, $i':P\to F'$ is a $D_{\mathbf{\Sigma},\bar{x}}$-free resolution, and $G=D(F'^{gp}/P^{gp})$, then $\mf{X}_{\mathbf{\Sigma}}$ is isomorphic to $[\Spec k[F']/G]$ over $X$.
\end{proposition}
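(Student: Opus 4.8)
The plan is to follow the proof of Proposition \ref{prop:comp} almost verbatim, with minimal free resolutions replaced by $D$-free resolutions, sliced resolutions by $\mathbf{\Sigma}$-sliced resolutions, and Proposition \ref{prop:2.12} by Proposition \ref{prop:toric2.12}. Since the rays of $\Sigma$ span $N\otimes\Q$, the cone $\sigma$ is full-dimensional, so $P^{gp}=M$ and $\Hom(P^{gp},\Z)=N$. Fix a minimal free resolution $i:P\to F$, which is injective, and after replacing $i'$ by an isomorphic map I may take $F'=F\oplus\N^r$ with $i'(p)=\bigl((b_\rho v_\rho(p))_\rho,(w_j(p))_j\bigr)$, again injective. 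Put $G=D(F'^{gp}/P^{gp})$ and $\mathcal{Y}=[\Spec k[F']/G]$. By \cite[Prop 5.20]{log}, the fiber of $\mathcal{Y}$ over $f:T\to X$ is the groupoid of triples $(\NN,\eta,\gamma)$, where $\NN$ is a fine log structure, $\gamma:F'\to\bbar{\NN}$ \'etale-locally lifts to a chart, and $\eta:f^*\MM_P\to\NN$ is a morphism of log structures making the evident square with $i'$ and $\gamma$ commute. Exactly as in Proposition \ref{prop:comp}, lifting a geometric point of $T$ through the surjection $\Spec k[F']\to\mathcal{Y}$ and applying Proposition \ref{prop:toric2.12} shows that $\eta$ is a $\mathbf{\Sigma}$-sliced resolution, so forgetting $\gamma$ produces a morphism $\Phi:\mathcal{Y}\to\mf{X}_{\mathbf{\Sigma}}$ of stacks over $X$.

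The only genuinely new point, and the step I expect to be the main obstacle, is full faithfulness of $\Phi$; this is where the hypothesis that $b_\rho v_\rho,w_1,\dots,w_r$ be distinct enters. Given triples $(\NN_1,\eta_1,\gamma_1)$ and $(\NN_2,\eta_2,\gamma_2)$ together with an isomorphism $\xi:\NN_1\to\NN_2$ of log structures satisfying $\xi\eta_1=\eta_2$, one must show $\bar\xi\gamma_1=\gamma_2$, which may be checked on stalks. At a geometric point $\bar t$ each $\gamma_{j,\bar t}$ is a quotient by a face, so $\bbar{\NN}_{j,\bar t}$ is free, of a common rank since $\bar\xi_{\bar t}$ is an isomorphism. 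Adopting the notation of the proof of Proposition \ref{prop:comp} — subsets $S_j$ of $\{1,\dots,n+r\}$, bijections $\sigma_j$ from $S_j$ onto the irreducible elements of $\bbar{\NN}_{j,\bar t}$, and the permutation $\tau$ induced by $\bar\xi_{\bar t}$ — I precompose the sought identity $\bar\xi_{\bar t}\gamma_{1,\bar t}=\gamma_{2,\bar t}$ with $i'$ (which I do know, because $\xi\eta_1=\eta_2$) and compare coordinates, obtaining for every irreducible element indexed by $\ell$
\[
(\text{the }(\tau\sigma_1)^{-1}(\ell)\text{-th coordinate of }i')(p)=(\text{the }\sigma_2^{-1}(\ell)\text{-th coordinate of }i')(p)\qquad(p\in P).
\]
But the coordinate functions of $i'$ are precisely the $b_\rho v_\rho$ and the $w_j$, which are pairwise distinct by assumption and, being linear functionals on $P^{gp}=M$, are determined by their restrictions to $P$; hence $(\tau\sigma_1)^{-1}(\ell)=\sigma_2^{-1}(\ell)$ for all $\ell$, so $S_1=S_2$ and $\tau\sigma_1=\sigma_2$, i.e.\ $\bar\xi_{\bar t}\gamma_{1,\bar t}=\gamma_{2,\bar t}$. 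Without distinctness this step genuinely fails, since two coordinates of $i'$ could be swapped, so it cannot be bypassed.

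For essential surjectivity I would argue as in Proposition \ref{prop:comp}: given a $\mathbf{\Sigma}$-sliced resolution $f:(T,\NN)\to(X,\MM_P)$, by full faithfulness it suffices to see that $f$ is fppf-locally in the image of $\Phi$. At a geometric point $\bar t$ one has $\bbar{\MM}_{P,f(\bar t)}=P/P_0$ for a face $P_0\subseteq P$, and $\bar f_{\bar t}:P/P_0\to\bbar{\NN}_{\bar t}$ is a $D_{\mathbf{\Sigma},\bar t}$-sliced resolution. Identifying the minimal free resolution of $P/P_0$ with $F/F_0$ via Proposition \ref{prop:face} ($F_0$ the face of $F$ generated by $i(P_0)$), one has the evident face quotient $F'=F\oplus\N^r\to(F/F_0)\oplus\N^{r'}$ with $r'=\#\{j:w_j(P_0)=0\}$, through which $i'$ composes to the $D_{\mathbf{\Sigma},\bar t}$-free resolution of $P/P_0$; composing further with the face quotient furnished by the sliced resolution, I obtain a face $H$ of $F'$ such that the composite $P\to f^{-1}\bbar{\MM}_{P,\bar t}\to\bbar{\NN}_{\bar t}$ factors through $i':P\to F'$ followed by the quotient $F'\to F'/H$. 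Since $i'$ is injective with $F'$ free and $F'\to F'/H$ \'etale-locally lifts to a chart, Proposition \ref{prop:2.17} provides, in an fppf neighborhood of $\bar t$, a chart $\epsilon:F'\to\NN$ compatible with $i'$ and lifting $\bar f_{\bar t}$; such a chart is precisely a lift of $f$ through $\Phi$. Hence $\Phi$ is an equivalence of stacks over $X$, which yields the ``moreover'' statement; and since $[\Spec k[F']/G]$ is visibly algebraic, $\mf{X}_{\mathbf{\Sigma}}$ is an Artin stack.
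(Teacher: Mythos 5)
Your proposal is correct and is essentially the paper's own argument: the paper explicitly obtains this proposition by running the proof of Proposition \ref{prop:comp} with minimal free resolutions replaced by $D$-free resolutions, sliced resolutions by $\mathbf{\Sigma}$-sliced resolutions, and Proposition \ref{prop:2.12} by Proposition \ref{prop:toric2.12}, with the distinctness of $b_\rho v_\rho, w_1,\dots,w_r$ invoked precisely where you invoke it, namely to establish fullness of $\Phi$ by comparing the coordinate functionals of $i'$. No discrepancies worth noting.
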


\begin{theorem}
\label{thm:toricmain}
If $\mathbf{\Sigma}=(N,\Sigma;b_\rho;w_j)$ is a stacky fan with $N$ torsion free and $b_\rho v_\rho,w_1,\dots,w_r$ distinct, then $\mf{X}_{\mathbf{\Sigma}}$ is a smooth log smooth Artin stack over $k$ with good moduli space $X(\Sigma)$.  
\end{theorem}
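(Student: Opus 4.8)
The plan is to transcribe the proof of Theorem~\ref{thm:main}, using $\mathbf{\Sigma}$-sliced resolutions in place of sliced resolutions and Propositions~\ref{prop:toric2.12} and~\ref{prop:toriccomp} in place of Propositions~\ref{prop:2.12} and~\ref{prop:comp}. Write $X=X(\Sigma)$. Being a smooth log smooth Artin stack is strict étale local on $X$, and good moduli space morphisms descend along a strict étale (in particular flat) cover of the target by \cite[Prop 4.7(ii)]{alper}, so I would first invoke Theorem 4.8 and Lemma 7.1 of \cite{logdef}: as $X$ is log smooth over $S=\Spec k$, there is a strict étale cover $h:Y\to X$ together with a smooth strict morphism $g:Y\to Z=\Spec k[P]$, where $P$ is a toric sharp monoid. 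The datum $D=D_{\mathbf{\Sigma},\bar z}$ attached by $\mathbf{\Sigma}$ to the torus-fixed point $z\in Z$, transported through the strict maps $g$ and $h$, is a datum for $P$, and it inherits from $\mathbf{\Sigma}$ the property that the collection $b_\rho v_\rho,w_1,\dots,w_r$ consists of distinct elements; let $\mathbf{\Sigma}_Z$ be the single-cone stacky fan on $Z$ determined by $P$ and $D$. Since the $\mathbf{\Sigma}$-sliced resolution condition is local on the source and depends only on the stalks $\bbar{\MM}_{X,f(\bar y)}\to\bbar{\MM}_{Y,\bar y}$ and the local datum $D_{\mathbf{\Sigma},f(\bar y)}$ — none of which is changed by the strict morphisms $g,h$ — one obtains $\mf{X}_{\mathbf{\Sigma}}\times_X Y\cong\mf{X}_{\mathbf{\Sigma}_Z}\times_Z Y$, exactly as $\mf{X}\times_X Y\cong\mathcal Y\times_Z Y$ in Theorem~\ref{thm:main}.

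Proposition~\ref{prop:toriccomp} now applies to $\mathbf{\Sigma}_Z$ and identifies $\mf{X}_{\mathbf{\Sigma}_Z}$ with $[\Spec k[F']/G]$ over $Z$, where $i':P\to F'$ is a $D$-free resolution and $G=D(F'^{gp}/i'(P^{gp}))$. In particular $\mf{X}_{\mathbf{\Sigma}}$ is algebraic. Since $F'$ is free, $\Spec k[F']$ is smooth over $k$; since $G$ is diagonalizable and the log structure on $[\Spec k[F']/G]$ is descended from the log smooth toric log structure on $\Spec k[F']$, it follows verbatim as in Theorem~\ref{thm:main} that $[\Spec k[F']/G]$, hence $\mf{X}_{\mathbf{\Sigma}}\times_X Y$, hence $\mf{X}_{\mathbf{\Sigma}}$, is a smooth log smooth Artin stack over $k$.

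For the good moduli space assertion I would first record that any $D$-free resolution $i':P\to F'$ is exact. Up to isomorphism $i'$ is the map $p\mapsto\big((b_\rho v_\rho(p))_\rho,\,w_1(p),\dots,w_r(p)\big)$ from $P$ into $F\oplus\N^r$, with $i:P\to F$ a minimal free resolution and $v_\rho$ the first lattice point on the extremal ray $\rho$ of $\CV$; if $p\in P^{gp}$ has $i'(p)\in F\oplus\N^r$, then $v_\rho(p)\geq0$ for every $\rho$ (as $b_\rho>0$), so $p\in P$ by Proposition~\ref{prop:can}. Hence the $G$-action on $k[F']$ is through its $F'^{gp}/i'(P^{gp})$-grading, so $k[F']^G$ is the degree-zero part, which by exactness of $i'$ equals $k[i'(P)]\cong k[P]$. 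Thus $[\Spec k[F']/G]\to Z$ is a good moduli space morphism by \cite[Ex 8.3]{alper}; base change (\cite[Prop 4.7(i)]{alper}) makes $\mf{X}_{\mathbf{\Sigma}}\times_X Y\to Y$ a good moduli space morphism, and descent along $h$ (\cite[Prop 4.7(ii)]{alper}) shows $\mf{X}_{\mathbf{\Sigma}}\to X(\Sigma)$ is one.

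The substantive content — the fullness and essential surjectivity of the comparison map, which is where the distinctness of $b_\rho v_\rho,w_1,\dots,w_r$ and the torsion-freeness of $N$ genuinely enter — is already contained in Propositions~\ref{prop:toric2.12} and~\ref{prop:toriccomp}. The one step above that requires separate attention, and which I expect to be the main (if modest) obstacle, is the reduction in the first paragraph: verifying that $\mf{X}_{\mathbf{\Sigma}}$ is, strict étale locally on $X(\Sigma)$, of the form $\mf{X}_{\mathbf{\Sigma}_Z}\times_Z Y$ — that is, that the $\mathbf{\Sigma}$-sliced resolution condition really factors through the stalk data and the local datum $D_{\mathbf{\Sigma},\bar x}$, and that this datum inherits the distinctness hypothesis needed to invoke Proposition~\ref{prop:toriccomp}. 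Everything else is a line-by-line repetition of the proof of Theorem~\ref{thm:main}.
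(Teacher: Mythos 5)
Your proposal is correct and follows the paper's intended argument, which is precisely to rerun the proof of Theorem \ref{thm:main} with Propositions \ref{prop:toric2.12} and \ref{prop:toriccomp} in place of Propositions \ref{prop:2.12} and \ref{prop:comp}, together with the exactness of $D$-free resolutions to get $k[F']^G=k[P]$. The only remark worth adding is that the step you single out as the main obstacle — transporting the datum $D_{\mathbf{\Sigma},\bar x}$ through the strict \'etale charts of \cite{logdef} — can be sidestepped entirely: since $X(\Sigma)$ is toric, it has a Zariski cover by the invariant affines $X_\sigma$, on which $D_{\mathbf{\Sigma},\bar x}$ is computed directly (Remark \ref{rmk:simplerformulation}) and Proposition \ref{prop:toriccomp} applies verbatim, so the local reduction is simpler here than in Theorem \ref{thm:main}.
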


We end by showing that $(\mf{X}(\mathbf{\Sigma}),\MM_{\mf{X}(\mathbf{\Sigma})})$ is isomorphic to the moduli space of $\mathbf{\Sigma}$-sliced resolutions.
\begin{theorem}
\label{thm:twostacks}
If $\mathbf{\Sigma}=(N,\Sigma;b_\rho;w_j)$ is a stacky fan with $N$ torsion-free and $b_\rho v_\rho,w_1,\dots,w_r$ distinct, then $\mf{X}(\mathbf{\Sigma})$ and $\mf{X}_{\mathbf{\Sigma}}$ are isomorphic as log stacks over $X(\Sigma)$.
\end{theorem}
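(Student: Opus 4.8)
The plan is to mimic the proof of Proposition~\ref{prop:comp}, working fppf-locally on $X:=X(\Sigma)$. For $\sigma\in\Sigma$ write $X_\sigma=\Spec k[P_\sigma]$ with $P_\sigma=\sigma^\vee\cap M$, so the $X_\sigma$ form an open cover of $X$. By the cartesian square established in the proof of Theorem~\ref{thm:toricgood}, the restriction of $\mf{X}(\mathbf{\Sigma})$ to $X_\sigma$ is $[U_\sigma/G_{\mathbf{\Sigma}}]$; and by construction the restriction of $\mf{X}_{\mathbf{\Sigma}}$ to $X_\sigma$ is the moduli stack of $\mathbf{\Sigma}$-sliced resolutions over $X_\sigma$-schemes. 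It therefore suffices to produce, for each $\sigma$, an isomorphism $\Phi_\sigma:[U_\sigma/G_{\mathbf{\Sigma}}]\to\mf{X}_{\mathbf{\Sigma}}|_{X_\sigma}$ over $X_\sigma$, compatibly with the open immersions $X_\tau\hookrightarrow X_\sigma$ for faces $\tau\leq\sigma$; these then glue to the desired isomorphism over $X$.

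To build $\Phi_\sigma$ I would give $[U_\sigma/G_{\mathbf{\Sigma}}]$ a moduli interpretation exactly as in Proposition~\ref{prop:comp}: identifying $\mathbb{A}^{n+r}$ with $\Spec k[F_{\mathbf{\Sigma}}]$ as in Subsection~\ref{subsec:Xgms} and using \cite[Prop 5.20]{log}, a $T$-point of $[U_\sigma/G_{\mathbf{\Sigma}}]$ is a triple $(\NN,\eta,\gamma)$, where $\NN$ is a fine log structure on $T$, where $\gamma:F_{\mathbf{\Sigma}}\to\bbar\NN$ \'etale-locally lifts to a chart and satisfies the openness condition cutting out $U_\sigma$, and where $\eta:f^{*}\MM_{X}\to\NN$ is compatible with the morphism $i_\sigma$ of Subsection~\ref{subsec:Xgms}. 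Running the argument of Proposition~\ref{prop:toric2.12} at each geometric point of $T$ (the map $i_\sigma$ is exact by Lemma~\ref{l:isigmaexact}, and Proposition~\ref{prop:face} identifies the relevant minimal free resolution on stalks) shows that $\eta$ is automatically a $\mathbf{\Sigma}$-sliced resolution; hence $(\NN,\eta,\gamma)\mapsto(\NN,\eta)$ defines $\Phi_\sigma$. Since $\MM_{\mf{X}_{\mathbf{\Sigma}}}$ is by definition the log structure carried by the tautological sliced resolution, $\Phi_\sigma$ is a morphism of log stacks; and since $\gamma$ plays no role in its definition, the $\Phi_\sigma$ are manifestly compatible under restriction to faces, so they glue.

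It then remains to check that each $\Phi_\sigma$ is fully faithful and essentially surjective (the latter only fppf-locally). Essential surjectivity mirrors the end of the proof of Proposition~\ref{prop:comp}: given a $\mathbf{\Sigma}$-sliced resolution one uses Proposition~\ref{prop:face} on stalks and Proposition~\ref{prop:2.17} to produce, in an fppf neighborhood, a chart $\gamma$ lifting it. The heart of the matter is full faithfulness: one must show that an isomorphism $\xi:\NN_1\to\NN_2$ with $\xi\eta_1=\eta_2$ automatically satisfies $\bbar\xi\gamma_1=\gamma_2$, which is checked on stalks. This runs as in Proposition~\ref{prop:comp}, except that the role played there by the distinctness of the first lattice points $v_1,\dots,v_d$ on the extremal rays of $\CV$ is now played by the distinctness of the $b_\rho v_\rho$ ($\rho\in\Sigma(1)$) together with $w_1,\dots,w_r$ --- precisely the hypothesis of the theorem, and the only place it enters (cf.\ the remark preceding Proposition~\ref{prop:toriccomp}).

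I expect this full-faithfulness step to be the main obstacle: one has to match up, on each stalk $\bbar\NN_{j,\bar t}$, the index sets and bijections labelling its irreducible elements, while simultaneously tracking the multiplicities $b_\rho$ and the extra generators $w_j$ of $F_{\mathbf{\Sigma}}$ --- bookkeeping more delicate than in the case $r=0$, $b_\rho=1$ of Proposition~\ref{prop:comp}. A secondary technical point is that for non-full-dimensional $\sigma$ the monoid $P_\sigma$ is not sharp, so Proposition~\ref{prop:toric2.12} must be applied after splitting off the torus factor of $X_\sigma$; this is routine but should be recorded.
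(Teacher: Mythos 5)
Your proposal is correct in outline but is organized quite differently from the paper's proof, and the difference is worth spelling out. The paper does not glue local isomorphisms at all: it first shows that $\pi_{\mathbf{\Sigma}}:(\mf{X}(\mathbf{\Sigma}),\MM_{\mf{X}(\mathbf{\Sigma})})\to(X,\MM_X)$ is itself a $\mathbf{\Sigma}$-sliced resolution (checked on each $X_\sigma$ by factoring $i_\sigma$ through the strict map $\Spec k[\bbar{F}_\sigma]\to U_\sigma$ and invoking Proposition \ref{prop:toric2.12}), so that the tautological object of $\mf{X}_{\mathbf{\Sigma}}$ immediately produces the global comparison morphism with no cocycle check. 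It then verifies this map is an isomorphism Zariski-locally by citing Proposition \ref{prop:toriccomp} — which packages the Proposition \ref{prop:comp}-style full-faithfulness/essential-surjectivity argument, applied to the \emph{sharp} atlas $\Spec k[\bbar{F}_\sigma]$ — and reducing to the claim that $[\Spec k[\bbar{F}_\sigma]/D(\bbar{A})]\to[\Spec k[F_\sigma]/D(A)]$ is an isomorphism, which it proves by an explicit snake-lemma and torsor computation ($\phi:F_\sigma\to Q'$ bijective). Your route instead puts the moduli interpretation directly on $[U_\sigma/G_{\mathbf{\Sigma}}]$ with its non-sharp atlas $F_\sigma$ and glues the resulting $\Phi_\sigma$. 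Both rest on the same core full-faithfulness argument, and you correctly locate where the distinctness of $b_\rho v_\rho,w_1,\dots,w_r$ enters. What the paper's arrangement buys is that the quotient-stack moduli interpretation only ever has to be run for a sharp free monoid (where \cite[Prop 5.20]{log} is applied exactly as in Proposition \ref{prop:comp}), with the units of $F_\sigma$ handled once and for all by the explicit computation; your arrangement buys a single uniform local statement at the cost of having to extend the chart-theoretic description to the atlas $\Spec k[F_\sigma]$ whose monoid has the nontrivial unit group $\Z^{S\setminus S_\sigma}$.

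One caution: the point you dismiss as ``routine but should be recorded'' — splitting off the torus factor of $U_\sigma$ and $X_\sigma$ for non-maximal (or non-full-dimensional) $\sigma$ — is precisely where the paper spends the entire second half of its proof. Identifying $[\Spec k[F_\sigma]/D(A)]$ with $[\Spec k[\bbar{F}_\sigma]/D(\bbar{A})]$ requires knowing that the unit group $Q$ of $F_\sigma$ injects into $A=\coker(i_\sigma^{gp})$ with quotient $\bbar{A}$ (the snake-lemma step), so that $D(A)$ acts transitively on the torus factor $D(Q)$ of $U_\sigma$ with stabilizer $D(\bbar{A})$. Without this your $\gamma:F_{\mathbf{\Sigma}}\to\bbar{\NN}$ only sees $\bbar{F}_\sigma$, and you would be comparing $\mf{X}_{\mathbf{\Sigma}}|_{X_\sigma}$ with the wrong quotient. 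So this step needs to be carried out, not merely recorded; once it is, your argument closes up and agrees with the paper's.
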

\begin{proof}
Let $X=X(\Sigma)$.  We begin by showing that $\pi_{\mathbf{\Sigma}}:(\mf{X}(\mathbf{\Sigma}),\MM_{\mf{X}(\mathbf{\Sigma})})\ra(X,\MM_X)$ is a $\mathbf{\Sigma}$-sliced resolution.  It suffices to check this Zariski locally on $X$.  Let $\sigma\in\Sigma$ be a maximal cone and let $X_\sigma$ be the corresponding torus-invariant affine open of $X$.  Let $U_\sigma$, $F_\sigma$, $P_\sigma$, $i_\sigma$, and $\pi_\sigma$ be as in Subsection \ref{subsec:Xgms}.  Let
\[
S=\Sigma(1)\cup\{1,\dots,r\}\quad\textrm{and}\quad S_\sigma=\{\rho\in\Sigma(1)\textrm{\ s.t.\ }v_\rho\in\sigma\}\cup\{j\textrm{\ s.t.\ }w_j\in\sigma\}.
\]
Then
\[
F_\sigma=\{\sum_{s\in S}c_se_s\mid c_s\in\N\textrm{\ if\ }s\in S_\sigma\}\subset\Z^S.
\]
Note that the units $Q$ of the monoid $F_\sigma$ are $\Z^{S\setminus S_\sigma}$, so the composite $\bar{\imath}_\sigma:P_\sigma\stackrel{i_\sigma}{\to} F_\sigma\stackrel{\epsilon}{\to}\bbar{F}_\sigma$ is given by
\[
\bar{\imath}_\sigma(p)=\sum_{s\in S_\sigma}w_s(p)e_s.
\]
Therefore, $\bar{\imath}_\sigma$ is a $D$-free resolution, where $x\in X_\sigma$ is the torus-invariant point and $D=D_{\mathbf{\Sigma},\bar{x}}$.  By Proposition \ref{prop:toric2.12}, we see then that
\[
(\Spec k[\bbar{F}_\sigma],\MM_{\bbar{F}_\sigma})\stackrel{\epsilon}{\longrightarrow} (U_\sigma,\MM_{F_\sigma})\stackrel{i_\sigma}{\longrightarrow} (X_\sigma,\MM_{P_\sigma})
\]
is a $D$-sliced resolution.  Since $\epsilon$ is strict, $i_\sigma$ is a $D$-sliced resolution.  Since the base change of $\pi_{\mathbf{\Sigma}}$ to $X_\sigma$ is $\pi_\sigma:([U_\sigma / G_{\mathbf{\Sigma}}], \MM_{[U_\sigma / G_{\mathbf{\Sigma}}]})\ra (X_\sigma,\MM_{P_\sigma})$, it follows that $\pi_{\mathbf{\Sigma}}$ is a $\mathbf{\Sigma}$-sliced resolution.

We therefore obtain a morphism
\[
(\mf{X}(\mathbf{\Sigma}),\MM_{\mf{X}(\mathbf{\Sigma})})\longrightarrow (\mf{X}_{\mathbf{\Sigma}},\MM_{\mf{X}_{\mathbf{\Sigma}}})
\]
over $(X,\MM_X)$, which we claim is an isomorphism.  We can check this Zariski locally, so we may base change to $X_\sigma$.  By Proposition \ref{prop:toriccomp}, we have
\[
\mf{X}_{\mathbf{\Sigma}}\times_X X_\sigma\simeq [\Spec k[\bbar{F}_\sigma] / D(\bbar{A})]
\]
as log stacks, where $\bbar{A}=\coker(\bar{\imath}^{gp}_\sigma)$.  Note that $G_{\mathbf{\Sigma}}=D(A)$, where $A=\coker(i^{gp}_\sigma)$.  We must therefore show that the log map
\[
[\Spec k[\bbar{F}_\sigma] / D(\bbar{A})]\longrightarrow [\Spec k[F_\sigma] / D(A)]
\]
induced by $\epsilon:F_\sigma\ra\bbar{F}_\sigma$ is an isomorphism.  Since
\[
[\Spec k[\bbar{F}_\sigma] / D(\bbar{A})]=[(\Spec k[\bbar{F}_\sigma]\times^{D(\bbar{A})}D(A)) / D(A)],
\]
it suffices to show the log map $\Spec k[\bbar{F}_\sigma]\times^{D(\bbar{A})}D(A)\ra \Spec k[F_\sigma]$ induced by $\epsilon$ is an isomorphism.

Applying the Snake Lemma to
\[
\xymatrix{
0 \ar[r] & Q
\ar[r] & F_\sigma^{gp}\ar[r] & \bbar{F}_\sigma^{gp}\ar[r] & 0\\
0 \ar[r] & 0\ar[r]\ar[u] & P_\sigma^{gp}\ar[r]^{id}\ar[u]^{i_\sigma^{gp}} & P_\sigma^{gp}\ar[r]\ar[u]_{\bar{\imath}_\sigma^{gp}} & 0
}
\]
we obtain a short exact sequence
\[
\xymatrix{
0 \ar[r] & Q
\ar[r] & A\ar[r]^\beta & \bbar{A}\ar[r] & 0.
}
\]
Let $\gamma:F_\sigma\ra A$ and $\delta:\bbar{F}_\sigma\ra\bbar{A}$ be the natural morphisms.  By definition,
\[
\Spec k[\bbar{F}_\sigma]\times^{D(\bbar{A})}D(A)=\Spec k[\bbar{F}_\sigma\oplus A]^{D(A')}
\]
where the action of $D(A')$ is induced by the morphism
\[
\eta:\bbar{F}_\sigma\oplus A\longrightarrow A'
\]
\[
(\bar{f},a)\longmapsto \delta(\bar{f})-\beta(a).
\]
Letting $Q'\subset \bbar{F}_\sigma\oplus A$ be the submonoid of elements $(\bar{f},a)$ which are mapped to 0 under $\eta$, we have
\[
\Spec k[\bbar{F}_\sigma]\times^{D(\bbar{A})}D(A)=\Spec k[Q']
\]
as log schemes.  It therefore suffices to show that the map
\[
\phi:F_\sigma\longrightarrow Q
\]
\[
f\longmapsto (\epsilon(f),\gamma(f))
\]
is an isomorphism.

We first show $\phi$ is injective.  Suppose $\phi(f)=\phi(f')$.  Then $\epsilon(f)=\epsilon(f')$ and so $f'=f+q$ for some $q\in Q$.  Since $q+\beta(f)=\beta(f')=\beta(f)$, we see $q=0$ as desired.

Lastly, we show $\phi$ is surjective.  Let $(\bar{f},a)\in Q'$ and let $f\in F_\sigma$ such that $\epsilon(f)=\bar{f}$.  Then there exists $q\in Q$ such that $a=\delta(\bar{f})+q$.  We see then that $\phi(f+q)=(\bar{f},a)$.
\end{proof}

\section*{Acknowledgements}  
I would like to thank Dan Abramovich, Bhargav Bhatt, Ishai Dan-Cohen, Anton Geraschenko, Arthur Ogus, and Shenghao Sun for many helpful conversations.  It is a pleasure to thank my advisor, Martin Olsson, whose guidance and inspiration greatly shaped this paper. Lastly, I thank the anonymous referee for his enthusiastic comments and helpful suggestions.

\end{document}